\def\lbr{\left\{}
\def\rbr{\right\}}
\def\msf{{\mathscr F}}
\def\mA{{\mathbb  A}}
\def\me{{\mathbb  E}}
\def\mr{{\mathbb  R}}
\def\mn{{\mathbb  N}}
\def\mp{{\mathbb  P}}
\newcommand{\rf}[1]{(\ref{#1})}
\def\lnfrac#1#2{\raise.7ex \hbox{\Small $#1$}
  \kern-.15em/\kern-.15em  \lower.2ex \hbox{\Small $#2$}}
\theoremstyle{plain}
\newtheorem{theorem}{Theorem}[section]
\newtheorem*{theorem*}{Theorem}
\newtheorem*{cor*}{Corollary}
\newtheorem{lem}[theorem]{Lemma}
\newtheorem{prop}[theorem]{Proposition}
\newtheorem{defn}[theorem]{Definition}
\newtheorem{remark}[theorem]{Remark}
\numberwithin{equation}{section}
\newcommand{\smp}[1]{\left(#1 \right)}
\newcommand{\midp}[1]{\left[#1 \right]}
\newcommand{\bigp}[1]{\left\{#1 \right\}}
\newcommand{\R}{\mathbb{R}}
\newcommand{\N}{\mathbb{N}}
\newcommand{\E}{\mathcal{E}}
\DeclareMathOperator{\D}{\mathcal{D}}
\newcommand{\divg}[1]{\mathrm{div}{\; #1}}
\providecommand{\abs}[1]{\left| #1\right|}
\providecommand{\norm}[1]{\left\lVert#1\right\rVert}
\newcommand{\ip}[1]{\langle #1 \rangle}
\title{A probabilistic Harnack inequality and strict positivity of Stochastic Partial Differential equations}
\author{Zhenan Wang}
\address{}
\email{}
\begin{document}
\begin{abstract}
Under general conditions we show an \textit{a priori} probabilistic Harnack inequality for the non-negative solution of a stochastic partial differential equation of the following form
$$\partial_t u=\divg(\mA\nabla u)+f(t,x,u;\omega)+g_i(t,x,u;\omega)\dot{w}_t^i.$$
We will also show that the solution of the above equation will be almost surely strictly positive if the initial condition is non-negative and not identically vanishing.
\end{abstract}
\maketitle

\section{Introduction}
Stochastic partial differential equations (SPDEs) have been studied extensively during the last four decades. Fine properties for the solutions have always been a difficult topic. On the topic of positivity for the solution of linear SPDEs with multiplicative noise, it is well known since the beginning that the solution will remain non-negative if the initial condition is non-negative, see Krylov~\cite{Kry99} and Pardoux~\cite{Par07}. As for the strictly positivity of the solution, the question for stochastic heat equation is addressed by Carl Mueller \cite{M91} in $1991$. In their work in $1998$, \cite{TZ98}, Tessitore and Zabczyk have extended the result to a form that is more general. The strict positivity question can also be asked for non-linear SPDEs such as the ones studied in Debussche, De Moor and Hofmanova~\cite{DMH14} and Pardoux~\cite{Par75}. In particular, many examples of semi-linear SPDEs with measurable coefficients can be found in the survey monograph edited by Carmona and Rozovskii~\cite{Survey} and the answer to the strict positivity question for these equations is also unknown. The goal of this paper is to address such problem for a class of semi-linear SPDEs.

In the paper we consider the following type of SPDEs on $\R^n$:
\begin{equation}
\label{basiceqn}
\partial_t u=\divg(\mA\nabla u)+f(t,x,u;\omega)+g_i(t,x,u;\omega)\dot{w}_t^i,
\end{equation}
where $\{w^i\}$ is a sequence of independent standard Brownian motions on a filtered probability space $(\Omega, \msf_*, \mp)$ and  $g = \{g_i\}$ is an $\ell^2$-valued function such that for each fixed $x$ and an $\msf_* = \lbr\msf_t\rbr$-progressively measurable process $h$, the process $g(t,x,h_t;\omega)$ is also progressively measurable. We will show a probabilistic Harnack inequality for non-negative solutions of such equation and use the inequality to conclude that the solution stays strictly positive if the initial condition is non-negative and not identically vanishing. The probabilistic Harnack inequality is a local result, therefore we work on a domain $B$ in $\R^n$ along a time interval $I$ starting at $0$. The basic assumptions are as follows:

(1) uniform ellipticity: $\mA(t, x, u; \omega)$ is $\msf_*$-progressively measurable and uniformly elliptic on the space-time domain on which the solution lies, i.e., there is a positive constant $\iota$ such that
$$\iota \text{Id}\le \mA(t, x, u; \omega )\le \iota^{-1}\text{Id}, \quad  \forall  (t,x,u,\omega) \in I\times B \times\mr\times \Omega .$$

(2) linear growth near $\infty$ and linear decay near $0$: there exists a positive constant $ \Lambda $ such that
\[
\abs{f(t,x,u;\omega)}+ \abs{g (t,x,u;\omega)}_{\ell^2} \leq \Lambda|u|, \quad \forall (t,x,u;\omega) \in I \times B\times \R\times\Omega.
\]
We emphasize that no further conditions concerning the continuity $A, f$ or $g$ are imposed.

A function  $u = u(t,x;\omega)$ is said to be a (stochastically strong) solution of \eqref{basiceqn} on $I\times B$ if $u$ is almost surely a $L^\infty(I,L^2(B))$ process, lives in $L^2(\Omega\times I, W^{1,2}(B))$ and satisfies the corresponding partial differential equation (PDE) in the sense that
\[
\ip{u (t) , \varphi} = \ip{u (0) , \varphi}- \int_0^t \ip{ \mA\nabla u (s),  \nabla \varphi } \;ds  + \int_{0}^t \ip{ f(u (s) ), \varphi }\;ds +\int_0^t \ip{ g_i(u(s) )  , \varphi}\,dw_s^i
\]
for all $\varphi \in C_{c}^{\infty} (B)$. Here $\langle\cdot, \cdot\rangle$ denotes the standard inner product on $L^2(\mr^n)$. The probabilistic Harnack inequality is described in the following theorem.

\begin{theorem}[Probabilistic Harnack inequality]
\label{Main Theorem}
Let $U=I\times B$ be a bounded space-time rectangle and $u$ be a non-negative solution of \eqref{basiceqn} on $U$. Let $P$ and $Q$ be two bounded space time domains as shown in Figure \ref{Fig:1}, namely, $P$ is strictly after $Q$ in time, $Q$ is strictly after $0$ and both are contained in $U$.
Then for any $\epsilon>0$, we have a constant $\varGamma_0$ depending only on $n$, $\iota$, $\Lambda$ and the positions of $P$ and $Q$, such that for all $\varGamma>\varGamma_0$ and $a>0$,
\[
\mp\bigp{ \sup_{Q} u  > a,\quad  \varGamma  \inf_{P} u \leq a } \leq \epsilon.
\]
\end{theorem}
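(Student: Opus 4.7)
The plan is to adapt Moser's parabolic Harnack inequality to the stochastic setting by proving each of its two main ingredients with high probability. Since both \eqref{basiceqn} and the growth bounds on $f, g$ are homogeneous under $u \mapsto \lambda u$, it suffices to take $a = 1$. Choose an auxiliary space-time cylinder $\widetilde Q \supset Q$ which still lies strictly earlier than $P$; the target is the chain
\[
\sup_Q u \;\le\; C_1 \left(\iint_{\widetilde Q} u^p\,dx\,dt\right)^{1/p} \;\le\; C_2 \inf_P u,
\]
each inequality holding off an event of probability at most $\epsilon/2$. On the intersection of the two good events, $\sup_Q u > 1$ would force $\inf_P u \geq 1/(C_1 C_2)$, contradicting $\Gamma \inf_P u \le 1$ whenever $\Gamma > \Gamma_0 := C_1 C_2$.

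The first inequality is a probabilistic local-boundedness estimate. Testing \eqref{basiceqn} against $\eta^2 u^{q-1}$ with $\eta$ a space-time cut-off and $q \ge 2$, applying It\^o's formula to $\iint \eta^2 u^q$, and using uniform ellipticity extracts a gradient term $\iota \iint \eta^2 |\nabla u^{q/2}|^2$, while the linear growth of $f, g$ contributes lower-order terms that iterate cleanly. The stochastic integral is handled via Burkholder--Davis--Gundy, whose output is converted into a high-probability statement by Markov's inequality at each step. The standard Moser iteration in $q$ on a sequence of shrinking cylinders then yields the required $L^\infty$ bound in terms of an $L^p$ average on $\widetilde Q$.

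For the second inequality I would consider $v_\delta = -\log(u + \delta)$ for small $\delta > 0$. It\^o's formula, together with the identity $\nabla u = -(u+\delta)\nabla v_\delta$, yields
\[
dv_\delta = \Bigl[\divg(\mA \nabla v_\delta) - \nabla v_\delta \cdot \mA \nabla v_\delta - \tfrac{f}{u+\delta} + \tfrac{1}{2}\tfrac{|g|_{\ell^2}^2}{(u+\delta)^2}\Bigr]\,dt - \tfrac{g_i}{u+\delta}\,dw_t^i,
\]
and the linear-growth hypothesis makes the drift coefficients $f/(u+\delta)$ and $|g|_{\ell^2}^2/(u+\delta)^2$ bounded by $\Lambda$ and $\Lambda^2$ respectively. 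Combined with the dissipative term $-\nabla v_\delta \cdot \mA \nabla v_\delta \le -\iota|\nabla v_\delta|^2$, this makes $v_\delta$ a stochastic sub-solution of a uniformly parabolic equation with bounded drift, to which a parabolic John--Nirenberg / Bombieri--Giusti BMO estimate applies. Inverting the logarithm then supplies the weak Harnack for $u$ at sufficiently small $p > 0$, after which one sends $\delta \downarrow 0$.

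I expect the logarithmic step to be the main obstacle. The deterministic John--Nirenberg / Bombieri--Giusti covering argument is already delicate; in the stochastic setting one must uniformly control the quadratic variation of $\int g_i/(u+\delta)\,dw_t^i$ in $\delta$ and arrange stopping times that are compatible with the covering, all while ensuring the bounds pass to the limit $\delta \downarrow 0$ where the non-negativity of $u$ and the linear-decay assumption on $f, g$ at the origin become essential. Once this log estimate is in place, the $L^\infty$ iteration is comparatively routine, and the positions of $P$ and $Q$ enter only through the size of $\widetilde Q$ and its time-separation from $P$, which determine $C_1$, $C_2$, and hence $\Gamma_0$.
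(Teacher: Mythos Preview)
Your high-level strategy is correct and matches the paper's four-step Moser outline in Section~2: local boundedness ($\sup$ controlled by an $L^p$-norm on a slightly larger cylinder), then a logarithmic transformation plus a parabolic John--Nirenberg estimate to close the loop. The identification of $v_\delta=-\log(u+\delta)$ as the hard step is also right; the paper devotes Section~6 to exactly this.

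The gap is in how you propose to control the stochastic integrals. Burkholder--Davis--Gundy followed by Markov yields a tail bound of the form $\mp\{\sup|M|>\lambda\}\le C_p\,\me[\langle M\rangle^{p/2}]/\lambda^p$, and $\langle M\rangle$ here is a random integral of powers of $u$. After Markov the right-hand side carries an \emph{expectation} of a norm of $u$, so your constants $C_1,C_2$---and hence $\varGamma_0$---would depend on moments of the particular solution, not only on $n,\iota,\Lambda$ and the geometry of $P,Q$. The theorem requires $\varGamma_0$ to be uniform over all nonnegative solutions. The paper avoids this by never claiming a pathwise inequality $X\le CY$ on a fixed good event; it works instead with the weaker form $\mp\{X>a,\ \varGamma Y\le a\}\le\epsilon$ valid for \emph{every} $a>0$, and handles each martingale term via the exponential inequality $\mp\{\sup M\ge\alpha\beta,\ \langle M\rangle\le\beta^2\}\le C\exp(-\alpha^2/C)$ coming from time-changed Brownian motion. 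This pathwise-conditional bound---not BDG---is what allows the (De~Giorgi rather than Moser) iteration and its Borel--Cantelli summation in Section~3 to close with constants depending only on structure. The paper flags precisely this issue in Section~2: ``we cannot expect such kind of estimate to hold for norms of $u$ path-wise.''
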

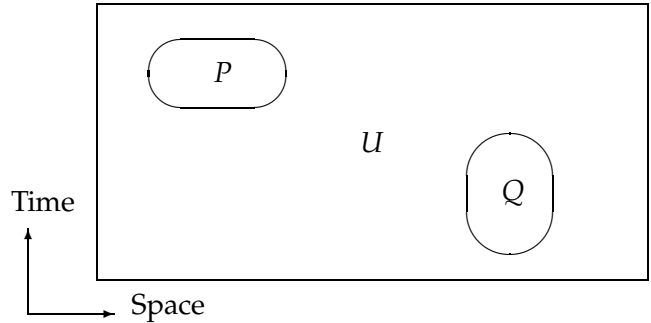
\begin{figure}[!htbp]
\setlength{\unitlength}{0.09in}
\centering
\begin{picture}(30,20)
\put(0,3){\framebox(32,16){$U$}}
\put(7,15){\oval(8,4){$ $}}
\put(24,8){\oval(5,7){$ $}}
\put(6.75,14.5){$P$}
\put(23.5,7.65){$Q$}
\put(-4,1){\vector(0,1){5}}
\put(-4,1){\vector(1,0){5}}
\put(-5,7){Time}\put(2,1){Space}
\end{picture}\label{Fig:1}
\caption{Relative positions of $P,Q$ and $U$.}
\end{figure}

Using this probabilistic Harnack inequality, we can show the strict positivity for \eqref{basiceqn}.
\begin{theorem}
\label{Positive}
Let $u$ be a solution of the SPDE \rf{basiceqn} on $\R^+\times\mr^n$ with a (deterministic) non-negative and not identically vanishing initial condition $u (0) =u_0\in L^2(\mr^n)$. Then almost surely, $u(t,x)>0$ for all $x\in\mr^n$ and $t>0$.
\end{theorem}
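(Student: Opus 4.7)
The plan is to derive Theorem~\ref{Positive} from Theorem~\ref{Main Theorem} by first producing a deterministic ``seed'' rectangle $Q_0$ near $t=0$ on which $\sup_{Q_0} u > c_0$ with probability arbitrarily close to $1$, and then using the probabilistic Harnack inequality to propagate this to strict positivity of $\inf_P u$ on any later rectangle $P$. Since $u_0\geq 0$, a standard comparison argument places us in the non-negative setting required by Theorem~\ref{Main Theorem}. It then suffices to prove, for every bounded space-time rectangle $P=[s_1,s_2]\times B_{R'}$ with $s_1>0$, that $\mp(\inf_P u>0)=1$, because $(0,\infty)\times\mr^n$ admits a countable cover by such rectangles.

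For the seed, $\|u_0\|_{L^2(\mr^n)}>0$ since $u_0\not\equiv 0$, so $M:=\|u_0\|_{L^2(B_R)}>0$ for $R$ large enough. Standard $L^2$-continuity of $t\mapsto u(t,\cdot)$ up to $t=0$ (from the It\^o energy identity associated with \rf{basiceqn}) gives that almost surely $\|u(t)\|_{L^2(B_R)}>M/2$ on a random interval $(0,\tau(\omega)]$ with $\tau>0$. Given $\epsilon>0$, choose a deterministic $t_0>0$ with $\mp(\tau>2t_0)\geq 1-\epsilon$, and set $Q_0:=[t_0,2t_0]\times B_R$. On the event $\{\tau>2t_0\}$, integrating in time gives $\|u\|_{L^2(Q_0)}\geq \sqrt{t_0}\,M/2$, and the trivial bound $\|u\|_{L^2(Q_0)}\leq\sqrt{|Q_0|}\,\sup_{Q_0} u$ then forces $\sup_{Q_0} u>c_0:=M/(2\sqrt{|B_R|})$, a strictly positive deterministic constant.

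Shrink $t_0$ further so that $2t_0<s_1$; then $Q_0$ and $P$ both sit inside a bounded rectangle $U$ and realize the geometric configuration of Theorem~\ref{Main Theorem}. Applied with $a=c_0$ and $\varGamma$ above the corresponding threshold $\varGamma_0(\epsilon)$, that theorem gives $\mp(\sup_{Q_0} u>c_0,\;\varGamma\inf_P u\leq c_0)\leq\epsilon$, which combined with the seed estimate yields $\mp(\inf_P u>c_0/\varGamma)\geq 1-2\epsilon$ and hence $\mp(\inf_P u>0)\geq 1-2\epsilon$. Since $\epsilon$ is arbitrary, $\inf_P u>0$ almost surely, and a countable union over rectangles covering $(0,\infty)\times\mr^n$ finishes the proof. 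The main technical obstacle I anticipate is the seed step: extracting a bound on $\sup_{Q_0} u$ from only $L^2$ information near $t=0$ presupposes that $u$ enjoys enough regularity (e.g., the standard continuous or lower-semicontinuous version for parabolic SPDEs) for the Harnack supremum to register this $L^2$ mass; this regularity should be implicit in the analytic framework underlying Theorem~\ref{Main Theorem}.
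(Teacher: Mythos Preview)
Your proposal is correct and follows essentially the same strategy as the paper: establish non-negativity, locate an early ``seed'' rectangle on which $\sup u$ is bounded below, and then apply the probabilistic Harnack inequality (Theorem~\ref{Main Theorem}) to force $\inf_P u>0$ on any later rectangle $P$, concluding by a countable cover. The only organizational difference is that the paper argues by contradiction---it first fixes the offending rectangle $P$ with $\mp\{\inf_P u=0\}>0$ and then finds a fixed seed $Q$ and level $a>0$ via a pigeonhole over a countable cover of the early time region---whereas you argue directly, letting the seed rectangle $Q_0$ depend on $\epsilon$; both routes hinge on the same $L^2$-continuity at $t=0$ and the continuity of $u$ for $t>0$ that you flag at the end.
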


The methods we use in this article are drastically different from the conventional approaches used for positivity problems of SPDEs. We continue our work in \cite{HWW14} and combine ideas from Fabes and Garofalo~\cite{FG85} and Moser~\cite{Moser64}. Rather than relying on the solution kernel, we analyze the local behavior of the energy for the solution by a combination of PDE techniques and stochastic analysis. Our work can be viewed as a stochastic version of Moser's work including a stochastic version of the time-lagged bounded mean oscillation property, therefore our flexible method can potentially be further applied to other type of nonlinear SPDEs.

The paper is organized as follows. In {\sc Section 2}, we will present a four-step outline of the proof for {\sc Theorem \ref{Main Theorem}}, complete the proofs for the first and fourth steps in the outline, and prove {\sc Theorem \ref{Positive}}. In {\sc Sections 3, 4} and {\sc 5}, we will give proofs for the second step. In {\sc Sections 6}, we will give the proofs for the third step.

\section{Outlines of the proof}
In this section, we will first outline the proof for the deterministic parabolic Harnack inequality, and then develop a parallel process for {\sc Theorem \ref{Main Theorem}}.
In the following, we use $\norm{\cdot}_{p,D}$ to denote the $L^p$ norm on a domain $D$ in $\R^n$ or $\R^+\times\R^n$; thus $\norm{f}_{p,D}=(\int_Df^pdx)^{\frac{1}{p}}$ or $\norm{f}_{p,D}=(\int_Df^pdxdt)^{\frac{1}{p}}$, depending on the context.

With the same picture as in {\sc Figure \ref{Fig:1}}, Moser \cite{Moser64} established the deterministic Harnack inequality for parabolic equations as follows.
\begin{theorem}[Moser's parabolic Harnack inequality]\label{DHarnack}
Let $u$ be a non-negative solution of the parabolic equation
\begin{equation}\label{Deterministic}
\frac{\partial u}{\partial t}=\sum_{k,l=1}^n\frac{\partial}{\partial x_k}(a_{kl}(t,x)\frac{\partial u}{\partial x_l})
\end{equation}
in $U$ with $(a_{kl})$ uniformly elliptic.
For $P$ and $Q$ satisfying the same requirement as in {\sc Theorem \ref{Main Theorem}}, there exists a constant $C$ depending only on $(a_{kl})$ such that
$$\sup_{Q}u\leq C\inf_{P}u.$$
\end{theorem}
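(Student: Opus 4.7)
The plan is to follow Moser's classical three-stage argument: (i) an $L^p$-to-$L^\infty$ upper bound for subsolutions via Moser iteration, (ii) an analogous $L^{-p}$-to-$L^\infty$-reciprocal bound for supersolutions, and (iii) a logarithmic estimate coupled with a parabolic John--Nirenberg inequality that bridges the two.

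For step (i), I would test \eqref{Deterministic} against $u^{2q-1}\varphi^2$, where $\varphi$ is a smooth cutoff supported in a slight parabolic enlargement $Q^+$ of $Q$. Using uniform ellipticity, integration by parts, and the parabolic Sobolev embedding, one obtains a reverse-H\"older inequality of the form $\|u\|_{L^{\kappa p}(Q)} \le C\|u\|_{L^p(Q^+)}$ for some $\kappa>1$ and every $p>0$; iterating over a telescoping sequence of cutoffs yields $\sup_Q u \le C_1 \bigl(\iint_{Q^+} u^p\,dx\,dt\bigr)^{1/p}$ for any fixed $p>0$. Step (ii) runs the same iteration with negative exponents applied to $u+\epsilon$ (sending $\epsilon\to 0^+$ at the end), yielding $\bigl(\iint_{P^-} u^{-p}\,dx\,dt\bigr)^{-1/p} \le C_2 \inf_P u$ for every $p>0$, where $P^-$ is a slight enlargement of $P$.

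The crucial bridging step (iii) is to show that for some sufficiently small $p_0>0$,
$$\left(\iint_{Q^+} u^{p_0}\,dx\,dt\right)^{\!1/p_0} \le C_3 \left(\iint_{P^-} u^{-p_0}\,dx\,dt\right)^{\!-1/p_0},$$
which, upon setting $v=\log u$, reduces to $\iint_{Q^+} e^{p_0 v}\cdot\iint_{P^-} e^{-p_0 v} \le C$ up to volume factors. I would obtain this by testing \eqref{Deterministic} against $\varphi^2/u$, producing an energy identity that controls $\iint |\nabla v|^2 \varphi^2$ together with the time derivative $\partial_t\int v\varphi^2$. A careful slicing in time combined with a spatial Poincar\'e inequality then shows that $v$ enjoys a parabolic time-lagged bounded mean oscillation property: the spatial mean of $v$ on an earlier slab majorizes the mean on a later slab up to a uniform constant, while the spatial oscillation on each slab is uniformly bounded. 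The parabolic John--Nirenberg inequality, proved by a Calder\'on--Zygmund cube decomposition applied separately to the super- and sub-level sets of $v-c$ on the two slabs, then yields exponential integrability of $p_0|v-c|$ on $Q^+$ and $P^-$ with a common constant $c$ and small $p_0>0$. Chaining (i), (iii), and (ii) closes the proof of Theorem~\ref{DHarnack}.

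The main obstacle is step (iii). Unlike the elliptic case, the underlying geometry is intrinsically asymmetric in time --- which is precisely why the statement demands that $P$ be strictly after $Q$ --- and the parabolic John--Nirenberg inequality must be established with this past/future structure respected throughout the Calder\'on--Zygmund decomposition. Correspondingly, this is the step I expect to resist the most when transplanted to the SPDE setting, since testing against $\varphi^2/u$ in \eqref{basiceqn} will generate an extra It\^o correction from the noise term $g_i\dot w^i$ that has no deterministic analogue; controlling its fluctuations is presumably the reason the paper must settle for a \emph{probabilistic} Harnack inequality rather than the pointwise inequality of Theorem~\ref{DHarnack}.
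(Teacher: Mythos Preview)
Your proposal is correct and follows essentially the same route as the paper's outline of Moser's argument (the paper does not give a full proof of Theorem~\ref{DHarnack}, only the four-step sketch immediately following its statement). The one cosmetic difference is that the paper phrases step~(ii) as ``$1/(u+\mu)$ is a sub-solution, apply the De~Giorgi iteration'' rather than your ``run Moser iteration with negative exponents on the supersolution''; these are equivalent, and indeed the paper's stochastic development in Sections~3--5 uses the De~Giorgi framing, while your anticipation of the It\^o correction in the $\varphi^2/u$ test is exactly what drives Section~6.
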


Moser's method establishes the inequality in the following four steps, as shown in {\sc Figure \ref{Fig:2}}:
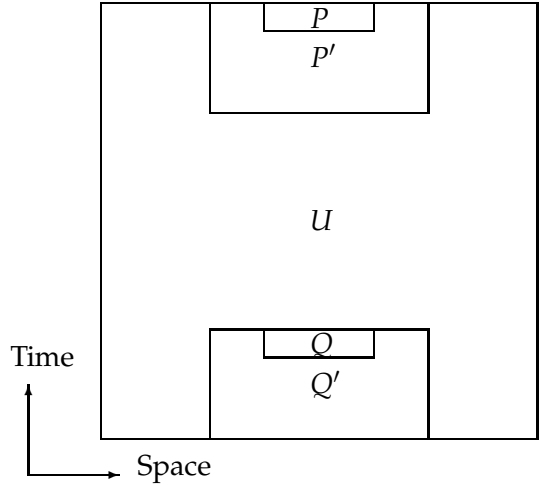
\begin{figure}[!htbp]
\setlength{\unitlength}{0.095in}
\centering
\begin{picture}(24,30)
\put(0,3){\framebox(24,24){$ $}}
\put(11.5,14.5){$U$}
\put(6,3){\framebox(12,6){$ $}}
\put(11.5,5.5){$Q'$}
\put(6,21){\framebox(12,6){$ $}}
\put(11.5,23.5){$P'$}
\put(9,25.5){\framebox(6,1.5){$ $}}
\put(11.5,25.65){$P$}
\put(9,7.5){\framebox(6,1.5){$ $}}
\put(11.5,7.75){$Q$}
\put(-4,1){\vector(0,1){5}}
\put(-4,1){\vector(1,0){5}}
\put(-5,7){Time}\put(2,1){Space}
\end{picture}
\caption{Relative positions of $P,Q,P',Q'$ and $U$.}
\label{Fig:2}
\end{figure}

\begin{enumerate}
\item We choose $P'$ and $Q'$ to be slightly larger than of $P$ and $Q$ respectively, $P'$ need to be strictly after $Q'$ while $Q'$ is allowed to touch time $0$, the sizes of the rectangles in {\sc Figure \ref{Fig:2}} are exaggerated.

\item For a fixed $\mu>0$, $1/(u+\mu)$ is a sub-solution of \eqref{Deterministic}. The De Giorgi iteration scheme shows that for all $p>0$, there exists $C_p>0$ such that $\sup_P\{1/(u+\mu)\}\leq C_p\norm{1/(u+\mu)}_{p,P'}$; at the same time $u+\mu$ is a solution of \eqref{Deterministic}, the same process gives $\sup_Q\{u+\mu\}\leq C_p\norm{u+\mu}_{p,Q'}$.

\item Now $-\log(u+\mu)$ is a sub-solution of an equation of the same type as \eqref{Deterministic}. This fact guarantees bounded mean oscillation (BMO) property in the parabolic sense for $\log(u+\mu)$. The parabolic John-Nirenberg inequality gives $\norm{1/(u+\mu)}_{p_0,P'}\norm{u+\mu}_{p_0,Q'}\leq K$ for some $p_0$ and $K$ independent of $\mu$.

\item Combining the results in the above two steps with $p=p_0$ in the second step, we have $\inf_Pu\geq C_{p_0}^{-2}K^{-1}\sup_Qu$ after letting $\mu\to 0$.
\end{enumerate}

To properly develop a stochastic version of Moser's method, we have to make two major difficulties. The first one is the lack of a definition of stochastic sub-solutions. It turns out that the na\"ive thought of simply changing the equality in the definition of the solutions to '$\leq$' is insufficient, as we need to describe the martingale property of the sub-solutions. We define a sub-solution as follows.
\begin{defn}
An almost surely bounded $L^2(B)$ process $u$ on $I$ living in $L^2(\Omega\times I, W^{1,2}(B))$ is a (stochastically strong) sub-solution of \eqref{basiceqn} on $I\times B$ if for all non-negative function $\phi\in C^\infty_c(B)$ and $s\leq t$,
\begin{enumerate}
\item $\displaystyle\langle u(t)-u(s),\phi\rangle\leq-\int_s^t\langle \mA\nabla u(\tau),\nabla\phi\rangle d\tau+\int_s^t\langle f(\tau),\phi\rangle d\tau+\int_s^t\langle g_i(\tau),\phi\rangle dw_\tau^i;$
\item the quadratic variation process of $\langle u,\phi\rangle$ at time $t$ equals to $\sum_i\int_0^t\langle g_i^2(\tau),\phi^2\rangle d\tau.$
\end{enumerate}
Here, $\langle\cdot,\cdot\rangle$ denotes the standard inner product on $L^2(B)$.
\end{defn}

The next difficulty is establishing stochastic version of inequalities in Moser's proof. Using a random variable $X$, say a certain norm of the solution $u$ of \eqref{basiceqn}, to bound another random variable $Y$, say another norm of $u$, with a non-random coefficient usually turns out impractical in the stochastic setting. Indeed, in our case, we cannot expect such kind of estimate to hold for norms of $u$ path-wise. Instead, to resemble the deterministic inequality $X\leq CY$, we use the tail probability of $Y$ to control the tail probability of $X$, namely,
$$\mp\bigp{X>a,CY\leq a}=o(1)\; \text{as}\; C\to\infty\;\text{for all}\;a>0.$$

At this point, we need to fix a few notations. For technical reasons, we use the maximum norm on $\R^n$, i.e., $|x|:=\max_i\{|x_i|\}$. We use the notation $B_r(x_0):=\{x\in\R^n||x-x_0|<r\}$. A $B_r$ without specifying the center will be understood as $B_r(0)$.
We also define $Q_r(t_0,x_0)$ as the space-time rectangular region $(t_0-r^2,t_0]\times B_r(x_0)$. A $Q_r$ without specifying the base point will be understood as $Q_r(1,0)$.

For a rectangular region $I\times B$, we define the following norms for all $p$ and $q$ positive,
$$\norm{h}_{p,q,I\times B}:=\norm{h}_{L^p(I,L^q(B))}=\smp{\int_I\norm{h}_{q,B}^pdt}^{1/p}.$$

In {\sc Section 3}, we will develop a local version of stochastic De Giorgi iteration from \cite{HWW14} and prove the following result.
\begin{prop}
\label{Local Max p large}
Let $u$ be a sub-solution of \eqref{basiceqn}
in $Q_1$. Then there exist $\varGamma(0)$ and $\delta(0)$ depending only on $n,\iota$ and $\Lambda$ such that for all $a>0$, $r\in(0,1]$, and $\varGamma\geq\varGamma(0)$,
$$\mp\bigp{ \norm{u}_{\infty, Q_{r/2} } >  a ,\; (r/2)^{-(n+1)/2} \norm{u}_{4,2 , Q_r} \leq a/\varGamma  } \leq \exp\bigp{-\varGamma^{\delta(0)}/r^2}.$$
The positions of $Q_{r/2}$ and $Q_r$ are shown in {\sc Figure \ref{Fig:3}}
\end{prop}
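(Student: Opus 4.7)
The plan is to adapt the stochastic De Giorgi iteration from \cite{HWW14} to the local setting and track the probability of failure at each iteration step. First, by the parabolic rescaling $\tilde{u}(s,y) := u(1 - (1-s)r^2/r^2, ry)$ it suffices to treat $r = 1$, reducing the claim to $\mp\bigp{\norm{u}_{\infty, Q_{1/2}} > a,\ \norm{u}_{4,2,Q_1} \leq a/\varGamma} \leq \exp(-\varGamma^{\delta(0)})$. Set up nested cylinders $Q_{\rho_k}$ with $\rho_k = 1/2 + 2^{-k-1}$, truncation levels $a_k = a(1 - 2^{-k})$, and smooth cutoffs $\phi_k \in C_c^\infty(B_{\rho_k})$ with $\phi_k \equiv 1$ on $B_{\rho_{k+1}}$ and $|\nabla \phi_k| + |\partial_t \phi_k|^{1/2} \leq C\, 2^k$. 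The strategy is to produce a recursion on $Y_k := a^{-2} \norm{(u - a_k)_+}_{2,Q_{\rho_k}}^2$ that forces $Y_k \to 0$ on a controlled event.

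Testing the sub-solution inequality against $(u-a_k)_+ \phi_k^2$ and using the quadratic variation identity in the definition of sub-solution yields, after exploiting ellipticity and the linear bound $|f| + |g|_{\ell^2} \leq \Lambda |u|$, a Caccioppoli-type estimate
\[
\sup_{t} \int (u-a_k)_+^2 \phi_k^2 \,dx + \int\!\!\int |\nabla(u-a_k)_+|^2 \phi_k^2\, dx\,dt \le C\, 4^k \int\!\!\int_{Q_{\rho_k}} (u-a_k)_+^2\, dx\,dt + M_k,
\]
where $M_k$ is a continuous martingale with quadratic variation dominated by $\Lambda^2 \int\!\!\int (u-a_k)_+^2 u^2 \phi_k^4\, dx\,dt$. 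The parabolic Sobolev embedding $L^\infty L^2 \cap L^2 W^{1,2} \hookrightarrow L^{2(n+2)/n}$ applied to $(u-a_k)_+\phi_k$, combined with the decomposition $\{u > a_{k+1}\} \subset \{(u-a_k)_+ > a\,2^{-k-1}\}$, converts this into the De Giorgi recursion
\[
Y_{k+1} \le C^k Y_k^{1+2/(n+2)} + a^{-2} 4^k M_k.
\]

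For the probabilistic estimate, fix the good set $G := \{\norm{u}_{4,2,Q_1} \le a/\varGamma\}$. On $G$, Cauchy–Schwarz plus Hölder in the time variable give $\langle M_k\rangle \le C\,a^4/\varGamma^2$ for the base step, and more generally $\langle M_k\rangle \le C\, a^4 \cdot \varGamma^{-2} \cdot 2^{-\beta k}$ after accounting for the fact that the truncation $(u-a_k)_+$ is supported where $u$ is comparable to $a$, so its $L^4 L^2$ norm decays geometrically on $G \cap \{Y_{k-1} \text{ small}\}$. Bernstein's exponential inequality for continuous martingales then yields, for any preassigned geometric threshold $\gamma_k = \gamma_0\, 4^{-k}$,
\[
\mp\bigp{G \cap \{|M_k| \ge a^2\gamma_k\}} \le \exp\bigl(- c\, \gamma_k^2\, \varGamma^2 \cdot 2^{\beta k}\bigr) \le \exp(-c\, \varGamma^{\delta(0)} \cdot 2^{\beta' k}).
\]
Summing over $k$ and choosing $\gamma_0$ small enough that $Y_0 + \sum_k C^k \gamma_k < Y_\infty$ with $Y_\infty$ the critical radius of the deterministic recursion closes the iteration: on the complement of $\bigcup_k\{|M_k| \ge a^2 \gamma_k\}$ the sequence $Y_k$ tends to $0$, which forces $u \le a$ on $Q_{1/2}$.

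The main obstacle is the fourth step: obtaining martingale tails that are simultaneously exponentially small in $\varGamma$, summable in $k$, and compatible with the polynomial factors $C^k$ and $4^k$ from the De Giorgi recursion. The precise choice of the $L^{4,2}$ norm in the hypothesis (rather than, say, $L^\infty L^2$) is exactly what makes the quadratic variation of each $M_k$ scale correctly so that Bernstein's inequality delivers an exponential tail with a rate comparable to $\varGamma^{\delta(0)}$, where $\delta(0)$ is determined by the slowest of the geometric rates produced in balancing the truncation decay against the cutoff blow-up.
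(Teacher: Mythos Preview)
Your overall architecture (localized De Giorgi iteration, martingale tail via an exponential inequality, Borel--Cantelli over $k$) is exactly the paper's strategy, and your final paragraph correctly identifies that the $L^4_tL^2_x$ norm is what makes the quadratic variation scale properly. But two concrete steps, as written, do not close.

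First, the iteration quantity. You run the recursion on $Y_k = a^{-2}\norm{(u-a_k)_+}_{2,Q_{\rho_k}}^2$, and you write the quadratic variation as $\langle M_k\rangle \lesssim \int\!\!\int (u-a_k)_+^2\, u^2\,\phi_k^4\,dx\,dt$. That bound comes from a Cauchy--Schwarz in space and leaves you with an $L^4$-type space-time integral that is \emph{not} controlled by $Y_{k-1}$ (an $L^2$ quantity), nor even by $\norm{u}_{4,2,Q_1}$; the inequality between $\int\!(\int u^2\,dx)^2 dt$ and $\int\!\!\int u^4\,dx\,dt$ goes the wrong way. Hence the assertion that ``$\langle M_k\rangle \le C a^4 \varGamma^{-2} 2^{-\beta k}$ on $G\cap\{Y_{k-1}\ \text{small}\}$'' is unjustified. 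The paper resolves this by iterating on $U_{k,a}:=\norm{u_{k,a}\varphi_k}_{4,2,I_k\times B_1}^2$ instead of the $L^2$ norm: using the pointwise bound $|g(u)|\,u_{k+1,a}\le C^k u_{k,a}^2$ (no Cauchy--Schwarz in $x$) gives directly $\langle X_k\rangle \le C^k\int\!\bigl(\int u_{k,a}^2\varphi_k^2\,dx\bigr)^2 dt = C^k U_{k,a}^2$, which then feeds the Borel--Cantelli events $\mathcal E_k=\{U_{k,a}\le (a/\varGamma)^2\gamma^k\}$. The choice of iterated norm is not cosmetic; it is precisely what links the martingale increment to the previous step.

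Second, the scaling. When you pass from general $r$ to $r=1$, the parabolic rescaling turns the noise term $g_i\,dw^i_t$ into $r\,g_i\,d\tilde w^i_s$, i.e.\ you land on equation \eqref{basiceqn} with an extra factor $\epsilon=r$ in front of the stochastic integral (and $\epsilon^2$ in front of $f$). The $1/r^2$ in the exponent of the statement comes from that $\epsilon$: the quadratic variation carries $\epsilon^2$, so the exponential martingale inequality yields $\exp(-\alpha^2/(C^k\epsilon^2))$, and after the Borel--Cantelli sum one gets $\exp(-\varGamma^{\delta(0)}/\epsilon^2)=\exp(-\varGamma^{\delta(0)}/r^2)$. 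Your reduction ``it suffices to treat $r=1$'' erases this factor and would only deliver $\exp(-\varGamma^{\delta(0)})$, which is strictly weaker than the claim and is not enough for the covering argument in the next section.
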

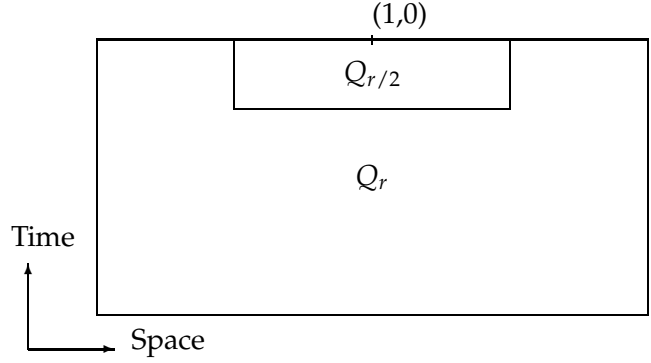
\begin{figure}[!htbp]
\setlength{\unitlength}{0.09in}
\centering
\begin{picture}(30,20)
\put(0,3){\framebox(32,16){$Q_r$}}
\put(8,15){\framebox(16,4){$Q_{r/2}$}}
\put(16,18.75){\line(0,1){0.5}}
\put(16,20){(1,0)}
\put(-4,1){\vector(0,1){5}}
\put(-4,1){\vector(1,0){5}}
\put(-5,7){Time}\put(2,1){Space}
\end{picture}
\caption{Relative positions of $Q_{r/2}$ and $Q_r$.}
\label{Fig:3}
\end{figure}

In {\sc Sections 4} and {\sc 5}, we will strengthen the above result into the following form, which will be used to prove the stochastic analogy of the second step in Moser's method.

\begin{prop}
\label{Local Max all p scaled} Let $u$ be a sub-solution to
\eqref{basiceqn} in $Q_1$. For every $2\geq p > 0$, there exist
$\varGamma(p), \delta(p)$ depending only on $n, \iota, \Lambda$ and $p$
such that for all $a>0$, $\varGamma \geq \varGamma(p)$ and $0<r<R\leq 1$,
\[
\mp\bigp{ \norm{ u}_{\infty, Q_r} >  a ,\;
(R-r)^{-(n+2)/p} \norm{u}_{p, Q_R} \leq a/\varGamma } \leq
\exp\bigp{-\varGamma^{\delta(p)}/R^2}.
\]
\end{prop}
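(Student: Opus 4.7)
The plan is a probabilistic Moser iteration along a geometric chain of space-time cubes, converting the $L^{4,2}$-norm provided by Proposition~\ref{Local Max p large} into the $L^p$-norm through the elementary local interpolation
\[
\norm{u}_{4,2,Q_\rho(t_0,x_0)}^4 \;\leq\; |B_\rho|\,\norm{u}_{\infty, Q_\rho(t_0,x_0)}^{4-p}\,\norm{u}_{p, Q_\rho(t_0,x_0)}^p,\qquad 0<p\leq 2,
\]
derived by applying $u^2\leq \norm{u}_{\infty}^{2-p}u^p$ inside the spatial integral and $\int u^p\leq|B_\rho|\norm{u}_{\infty}^p$ to one of the two time factors. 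Crucially, I would apply this interpolation on the \emph{small} cubes of the Moser covering so that $|B_\rho|^{1/4}\sim\rho^{n/4}$ carries the local radius; substituting $|B_R|^{1/4}$ here would introduce a spurious $(R/(R-r))^{n/4}$ loss that spoils the sharp exponent $(R-r)^{-(n+2)/p}$.

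Concretely, set $r_k=r+(R-r)(1-2^{-k})$ (so $r_0=r$ and $r_\infty=R$) and $a_k=a\,c^k$ with $c=2^{(n+2)/p}$. For each $k$, cover $Q_{r_k}$ by $N_k\lesssim (2^{k}R/(R-r))^{n+2}$ small cubes $Q_{\rho_k/2}(t^i,x^i)$ with centers in $Q_{r_k}$ and $\rho_k=(R-r)2^{-(k+1)}$; the choice of $\rho_k$ guarantees $Q_{\rho_k}(t^i,x^i)\subset Q_{r_{k+1}}$. A translation and parabolic rescaling (which preserve the ellipticity $\iota$ and, because $\rho_k\leq 1$, do not increase $\Lambda$) transport Proposition~\ref{Local Max p large} to each small cube; a union bound over $i$ then yields
\[
\mp\bigp{\norm{u}_{\infty,Q_{r_k}}>a_k,\; \max_i(\rho_k/2)^{-(n+1)/2}\norm{u}_{4,2,Q_{\rho_k}(t^i,x^i)}\leq a_k/\tilde\varGamma_k}\;\leq\; N_k\exp\bigp{-\tilde\varGamma_k^{\delta(0)}/\rho_k^2}.
\]

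On the event $\{\norm{u}_{\infty,Q_{r_{k+1}}}\leq a_{k+1}\}\cap\{(R-r)^{-(n+2)/p}\norm{u}_{p,Q_R}\leq a/\varGamma\}$, the local interpolation on each $Q_{\rho_k}(t^i,x^i)\subset Q_{r_{k+1}}$ bounds $\max_i(\rho_k/2)^{-(n+1)/2}\norm{u}_{4,2,Q_{\rho_k}(t^i,x^i)}$ by $C\,\rho_k^{-(n+2)/4}\,a_{k+1}^{1-p/4}\bigl(a(R-r)^{(n+2)/p}/\varGamma\bigr)^{p/4}$, and the specific choice $c=2^{(n+2)/p}$ cancels both the $\rho_k$- and $k$-dependence exactly, producing a uniform $\tilde\varGamma_k=C_0\varGamma^{p/4}$ with threshold $\varGamma(p)$ depending only on $n,\iota,\Lambda,p$. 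Because $\norm{u}_{\infty,Q_R}<\infty$ a.s.\ and $a_k\to\infty$, the bad event $\{\norm{u}_{\infty,Q_r}>a\}\cap\{(R-r)^{-(n+2)/p}\norm{u}_{p,Q_R}\leq a/\varGamma\}$ is contained in the union $\bigcup_k\{\norm{u}_{\infty,Q_{r_k}}>a_k,\,\norm{u}_{\infty,Q_{r_{k+1}}}\leq a_{k+1}\}$ obtained by locating the largest index with $M_k>a_k$; summing the bounds over $k$ gives $\sum_k N_k\exp(-C_0^{\delta(0)}\varGamma^{p\delta(0)/4}/\rho_k^2)$, dominated by its $k=0$ term, and after absorbing the combinatorial factor $(R/(R-r))^{n+2}$ into the exponential it is bounded by $\exp(-\varGamma^{\delta(p)}/R^2)$.

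The principal difficulty I expect is the algebraic matching that pins down $c=2^{(n+2)/p}$: it has to cancel simultaneously the $\rho_k^{-(n+2)/4}$ scaling of the combined Moser step and the ratio $a_{k+1}^{1-p/4}/a_k$ arising from the $L^\infty$-factor in the interpolation, and must do so in a way that leaves $\tilde\varGamma_k$ both uniform in $k$ and independent of $R/(R-r)$, so that Proposition~\ref{Local Max p large} can be invoked at every step with a single $\varGamma(0)$. A secondary, technical matter is absorbing the combinatorial factor $N_k$ and the ratio $R/(R-r)$ into the exponential bound; this is possible because $\rho_k^{-2}$ grows geometrically in $k$ and overwhelms any fixed polynomial in $R/(R-r)$ once $\varGamma$ exceeds the $p$-dependent threshold.
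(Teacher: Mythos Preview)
Your proposal is correct and gives a genuinely different, more direct argument than the paper's.

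The paper proceeds in two stages. First it upgrades Proposition~\ref{Local Max p large} from the fixed pair $(Q_{r/2},Q_r)$ to an arbitrary pair $(Q_r,Q_R)$ with $0<r<R\le1$ by a covering argument (Proposition~\ref{Local Max 4 2 scaled}). Then, rather than passing from the $L^{4,2}$-norm straight to $L^p$, it introduces two ``exponent reduction'' lemmas (Lemmas~\ref{Exponent Reduction Time} and~\ref{Exponent Reduction Space}) that lower one of the two exponents in $\|\cdot\|_{\alpha,q}$ by any factor in $(1/2,1)$, and iterates them finitely many times. Each such reduction is itself a Borel--Cantelli chain along a geometric sequence of radii, but the chain carries a running sum $\sum_j c_j\|u\|_{\beta,q,Q_{r_j}}$ and requires an auxiliary probabilistic lemma (Lemma~\ref{Prob lemma}) to move from constant thresholds to thresholds of the form $YN+Z$.

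Your route collapses both stages. You run a single geometric chain $r_0<r_1<\cdots\to R$, decompose the bad event according to the last index with $\|u\|_{\infty,Q_{r_k}}>a_k$, cover $Q_{r_k}$ by small cubes of radius $\rho_k=(R-r)2^{-(k+1)}$, and on each small cube invoke Proposition~\ref{Local Max p large} directly, feeding it the one-line interpolation $\|u\|_{4,2,Q_\rho}^4\le|B_\rho|\,\|u\|_{\infty,Q_\rho}^{4-p}\|u\|_{p,Q_\rho}^p$. The choice $c=2^{(n+2)/p}$ exactly balances the $\rho_k^{-(n+2)/4}$ scaling against $a_{k+1}^{1-p/4}$, producing a $\tilde\varGamma=C_0\varGamma^{p/4}$ independent of $k$ and of $R/(R-r)$; this is the crux, and your algebra is right.

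What each approach buys: the paper's modular reduction is more general---since the time and space exponents are decoupled, it yields the mixed-norm Proposition~\ref{Local Max all p q scaled} (arbitrary $p\le4$, $q\le2$) at no extra cost. Your argument is tailored to the isotropic $L^p$ case stated in Proposition~\ref{Local Max all p scaled}, but it is shorter and sidesteps Lemma~\ref{Prob lemma} entirely. The one technicality you invoke, that $\|u\|_{\infty,Q_R}<\infty$ a.s.\ so that the decomposition terminates, is handled in the paper the same way (end of the proof of Lemma~\ref{Exponent Reduction Time}): apply the already-established $(4,2)$ estimate on any $Q_{r_m}\Subset Q_R$ and treat $R=1$ by a limit.
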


To make our presentation for the analogy of the third step clearer, we will use the following notation from now on for any function $v>0$ and bounded measurable regions $D_1$ and $D_2$,
\[
\mathcal{F}[v, \alpha]_{D_1,D_2} := \smp{ \int_{D_1} v^{-\alpha}\;dxdt } \smp{ \int_{D_2}  v^{\alpha} \;dxdt }.
\]

In {\sc Sections 6}, we will provide a variant of the parabolic John-Nirenberg inequality in \cite[Theorem 1]{FG85}.
We then use this variant to prove the following reverse Cauchy-Schwarz inequality type statement.
\begin{prop}
\label{S JN}
Given $t\in(0,1)$, for every $\epsilon >0$, there exist constants $\alpha_{\epsilon}$ and $K_{\epsilon}$ depending only on $n, \iota, \Lambda,t$ and  $\epsilon$ such that $\forall \mu>0$ and any non-negative super-solution $u$ of \eqref{basiceqn} in $[0,2]\times B_1$.
\begin{equation}
\mp \bigp{ \mathcal{F}[u+\mu,\alpha_{\epsilon}]^{1/\alpha_{\epsilon}}_{D^+,D^-} > K_{\epsilon}  }  < \epsilon.
\end{equation}
Here, $D^+=(2-t^2,2)\times B_t$ and $D^-=(0,t^2)\times B_t$ as shown in {\sc Figure \ref{Fig:4}}.
\end{prop}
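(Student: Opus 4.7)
The plan is to implement Moser's logarithmic trick in the stochastic setting. Set $v := u + \mu > 0$, so that $v$ is a bounded-below super-solution of \eqref{basiceqn}. Applying It\^o's formula (after a standard mollification to make $v^{-1}$ admissible) to $-\log v$, and using both the super-solution property and the linear-growth bounds $|f| + |g|_{\ell^2} \le \Lambda|u| \le \Lambda v$, I would obtain weakly an inequality of the form
\[
\partial_t(-\log v) \le \mathrm{div}(\mA \nabla(-\log v)) - \mA \nabla v \cdot \nabla v/v^2 + \Lambda + \tfrac{1}{2}|g/v|_{\ell^2}^2 - (g_i/v)\dot w^i_t,
\]
so that, by uniform ellipticity and the bound $|g/v|_{\ell^2} \le \Lambda$, the function $w := -\log v$ satisfies a sub-solution-like equation in which a good quadratic penalty $\iota|\nabla w|^2$ is subtracted from the drift, an $O(1)$ deterministic forcing remains, and the noise coefficients are uniformly bounded \emph{independently of} $\mu$. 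This uniformity in $\mu$ is what will let the resulting estimates survive the eventual limit $\mu \to 0$ in the larger Moser scheme.

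Testing this inequality against $\phi^2$ for a smooth space-time cut-off $\phi$, integrating by parts, and combining with a martingale tail bound (Doob or Burkholder--Davis--Gundy) on the It\^o integral that appears, would produce a stochastic Caccioppoli-type estimate: on any parabolic subcylinder $Q \subset [0,2]\times B_1$, with probability close to one, $\int_Q |\nabla w|^2 \, dxdt$ is bounded by a constant depending only on $n, \iota, \Lambda$ and the geometry of $Q$. Combined with the Poincar\'e inequality, this yields a \emph{stochastic} time-lagged BMO control of $\log v$: with probability at least $1 - \epsilon/2$ there is a constant $c$ (for instance the space-time mean of $\log v$ over a reference slab strictly between $D^-$ and $D^+$) such that $\log v - c$ satisfies the hypothesis of the parabolic John--Nirenberg variant to be established in Section~6. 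The time lag between $D^-$ and $D^+$ is essential here, as it absorbs the fluctuation of the martingale part and lets us treat $c$ as if it were deterministic on the good event.

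Invoking that John--Nirenberg variant, we would obtain, on an event of probability at least $1 - \epsilon$ and for suitable $\alpha_\epsilon$ and $K_\epsilon$ depending only on $n, \iota, \Lambda, t, \epsilon$,
\[
\int_{D^+} e^{-\alpha_\epsilon (\log v - c)} \, dxdt \le K_\epsilon^{1/2}, \qquad \int_{D^-} e^{\alpha_\epsilon (\log v - c)} \, dxdt \le K_\epsilon^{1/2}.
\]
Multiplying the two bounds makes the random factor $e^{\pm\alpha_\epsilon c}$ cancel and yields $\mathcal{F}[v,\alpha_\epsilon]_{D^+,D^-} \le K_\epsilon$, from which the stated conclusion follows after taking the $(1/\alpha_\epsilon)$-th root and relabeling the constant. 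The main obstacle I anticipate is the stochastic Caccioppoli step: unlike in the deterministic case one cannot freely localize in time by truncation of $\log v$, so I would need a stopping-time argument to convert the weak It\^o inequality into a genuine energy bound, and a careful choice of test function so that the quadratic variation of the remaining martingale is controlled uniformly in $\mu$ and in the choice of subcylinder $Q$.
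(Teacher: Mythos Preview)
Your overall architecture matches the paper's: take $w=-\log(u+\mu)$, derive the sub-solution inequality with the quadratic gradient term, control the martingale noise, and feed a BMO-type bound into a parabolic John--Nirenberg inequality, after which the random center cancels in the product. The uniformity in $\mu$ that you flag is exactly right.

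The gap is in the step you describe as ``on any parabolic subcylinder $Q$, with probability close to one, $\int_Q|\nabla w|^2$ is bounded.'' The hypothesis of the parabolic John--Nirenberg variant is a mean-oscillation bound on \emph{every} cube in an infinite dyadic-type family, and you need all of these to hold on a \emph{single} event of probability $\ge 1-\epsilon$. Your sketch gives a bound cube-by-cube but does not explain how to unionize over infinitely many cubes; a naive union bound diverges. The paper separates this into two pieces: first an \emph{almost sure} inequality on each cube $C_j(i)$ that carries an explicit martingale correction $M^\mu_{C_j(i)}$ (obtained not via a Caccioppoli estimate but via a weighted Poincar\'e inequality that turns the tested equation into an ODE-type inequality for the spatial average, from which a level-set bound $|\{z>a\}|\le C/a$ follows); second, a uniform control of $\sup_t|M^\mu_{C_j(i)}(t)|$ over \emph{all} cubes simultaneously. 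The latter works because on a cube of temporal length $\sim r_j^2$ the quadratic variation of $M^\mu_{C_j(i)}$ is $\lesssim r_j^2$, so the tail probability is $\exp\{-L^2/(Cr_j^2)\}$; since the number of cubes at scale $j$ grows only like $4^{(n+3)j}$ while $r_j^{-2}=16^j$, the sum $\sum_j 4^{(n+3)j}\exp\{-16^j L^2/C\}$ converges and can be made $<\epsilon$. Your proposed ``stopping-time argument'' and ``uniform in $Q$'' quadratic-variation bound are gestures in the right direction, but the scale-dependent tail estimate and the summability over scales are the crux, and they are absent from the proposal.
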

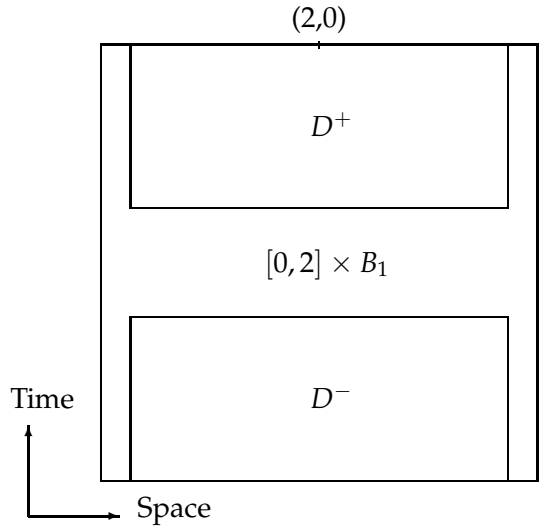
\begin{figure}[!htbp]
\setlength{\unitlength}{0.095in}
\centering
\begin{picture}(24,30)
\put(0,3){\framebox(24,24){$ $}}
\put(9,14.5){$[0,2]\times B_1$}
\put(1.6077,12){\line(1,0){20.7846}}
\put(1.6077,12){\line(0,-1){9}}
\put(22.3923,12){\line(0,-1){9}}
\put(1.6077,18){\line(1,0){20.7846}}
\put(1.6077,18){\line(0,1){9}}
\put(22.3923,18){\line(0,1){9}}
\put(11.5,22){$D^+$}
\put(11.5,7){$D^-$}
\put(-4,1){\vector(0,1){5}}
\put(-4,1){\vector(1,0){5}}
\put(12,26.8){\line(0,1){0.4}}
\put(10.5,28){(2,0)}
\put(-5,7){Time}\put(2,1){Space}
\end{picture}
\caption{Relative positions of $D^+,D^-$ and $[0,2]\times B_1$.}
\label{Fig:4}
\end{figure}

Now we prove the probabilistic Harnack inequality. For ease of reference, we restate it here.
\begin{theorem}
Let $U=I\times B$ be a bounded space-time rectangle and let $P$ and $Q$ be two bounded space time domains as shown in Figure \ref{Fig:1}, namely, $P$ is strictly after $Q$ in time, $Q$ is strictly after $0$ and both are contained in $U$.
Then for any $\epsilon>0$, we have a constant $\varGamma_0$ depending only on $n$, $\iota$, $\Lambda$ and the positions of $P$ and $Q$, such that for all $\varGamma>\varGamma_0$, $a>0$ and any non-negative solution $u$ of \eqref{basiceqn} on $U$.
\[
\mp\bigp{ \sup_{Q} u  > a,\quad  \varGamma  \inf_{P} u \leq a } \leq \epsilon.
\]
\end{theorem}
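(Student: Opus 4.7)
The plan is to implement the stochastic analog of Moser's step (iv), combining the three tail estimates from Propositions~\ref{Local Max all p scaled} and~\ref{S JN} via a single union bound. Fix $\epsilon>0$. For each $\mu>0$, the shifted solution $u+\mu$ is automatically a non-negative (sub-)solution of~\eqref{basiceqn}, while its reciprocal $v_\mu:=(u+\mu)^{-1}$ should be a non-negative bounded sub-solution of an SPDE of the same form. Formally, Itô's formula applied to the map $\phi(y)=(y+\mu)^{-1}$ produces
\[
\partial_t v_\mu = \divg(\mA\nabla v_\mu)-2v_\mu^{-1}(\mA\nabla v_\mu)\cdot\nabla v_\mu-v_\mu^2 f+v_\mu^3|g|_{\ell^2}^2-v_\mu^2 g_i\dot w^i_t,
\]
and the gradient term $-2v_\mu^{-1}(\mA\nabla v_\mu)\cdot\nabla v_\mu$ is non-positive, so deleting it converts the equality into a sub-solution inequality. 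Using $u\leq v_\mu^{-1}$ and $|f|+|g|_{\ell^2}\leq\Lambda u$, the effective coefficients $\tilde f:=-v_\mu^2 f+v_\mu^3|g|^2$ and $\tilde g_i:=-v_\mu^2 g_i$ satisfy $|\tilde f|+|\tilde g|_{\ell^2}\leq(\Lambda+\Lambda^2)v_\mu$ uniformly in $\mu$, so both $u+\mu$ and $v_\mu$ fall under the hypotheses of Proposition~\ref{Local Max all p scaled} with the same $\iota$.

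Next I pick enlargements $Q\Subset Q'\Subset U$ and $P\Subset P'\Subset U$, with $P'$ still strictly later than $Q'$ and $Q'$ still strictly later than time~$0$, chosen so that after a parabolic rescaling determined only by the positions of $P,Q,U$ the pair $(Q',P')$ maps onto $(D^-,D^+)$ of Proposition~\ref{S JN} for some $t\in(0,1)$. Let $\alpha=\alpha_{\epsilon/3}$ and $K=K_{\epsilon/3}$ be the associated constants. Covering $Q$ (respectively $P$) by finitely many parabolic sub-cubes whose concentric doublings still lie in $Q'$ (respectively $P'$), applying Proposition~\ref{Local Max all p scaled} with $p=\alpha$ on each sub-cube, and taking a deterministic union over the finite cover yield, for some threshold $\varGamma_1$ and a geometric factor $C_\ast$ depending only on $n,\iota,\Lambda,\alpha$ and the configuration,
\begin{align*}
\mp\bigl\{\sup_Q(u+\mu)>\varGamma_1 C_\ast\|u+\mu\|_{\alpha,Q'}\bigr\}&\leq\epsilon/3,\\
\mp\bigl\{\sup_P v_\mu>\varGamma_1 C_\ast\|v_\mu\|_{\alpha,P'}\bigr\}&\leq\epsilon/3.
\end{align*}
Proposition~\ref{S JN} (after the same rescaling) furnishes $\mp\{\|u+\mu\|_{\alpha,Q'}\cdot\|v_\mu\|_{\alpha,P'}>K\}\leq\epsilon/3$, with $K$ independent of $\mu$. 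On the intersection of the three good events, of probability at least $1-\epsilon$, one obtains $\sup_Q(u+\mu)\cdot\sup_P v_\mu\leq(\varGamma_1 C_\ast)^2 K=:\varGamma_0$, i.e.\ $\sup_Q(u+\mu)\leq\varGamma_0\inf_P(u+\mu)$.

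To extract the stated tail bound, given $a>0$ and $\varGamma>\varGamma_0$ I choose $\mu$ small enough that $(\varGamma_0-1)\mu<a(\varGamma-\varGamma_0)/\varGamma$; on the good event just constructed, the inequality $\sup_Q u>a$ forces $\inf_P u>a/\varGamma$, so $\{\sup_Q u>a,\ \varGamma\inf_P u\leq a\}$ lies in the complement of that event and hence has $\mp$-measure at most~$\epsilon$. The main obstacle will be the first paragraph: rigorously verifying that $v_\mu$ satisfies \emph{both} parts of the paper's definition of a sub-solution---particularly the quadratic-variation identity in part~(2)---without any continuity assumption on $\mA,f,g$, which should go through via a mollification/Krylov--Itô argument combined with the a priori bound $0<v_\mu\leq 1/\mu$. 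The remaining ingredients (parabolic covering, the three-way union bound, and the concluding choice of $\mu$) are soft.
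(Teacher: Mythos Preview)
Your overall architecture is right—apply Proposition~\ref{Local Max all p scaled} to $u+\mu$ on $Q'$, to $v_\mu$ on $P'$, and link via Proposition~\ref{S JN}—but the way you combine them has a genuine gap.

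Proposition~\ref{Local Max all p scaled} gives \emph{slice} estimates
\[
\mp\bigl\{X>a,\ Y\leq a/\varGamma\bigr\}\leq e^{-\varGamma^{\delta}/R^2}\qquad\text{uniformly in }a,
\]
with $X=\norm{\,\cdot\,}_{\infty}$ and $Y=(R-r)^{-(n+2)/p}\norm{\,\cdot\,}_{p}$. You assert that this yields the \emph{ratio} bound $\mp\{X>\varGamma_1 Y\}\le\epsilon/3$ with $\varGamma_1$ depending only on $n,\iota,\Lambda,\alpha$ and the geometry. That implication is false in general: given any target $\varGamma_0$, put $N=\lceil e^{(2\varGamma_0)^{\delta}}\rceil$, take $\Omega=\{1,\dots,N\}$ with uniform measure, set $X(k)=(3\varGamma_0)^k$ and $Y(k)=X(k)/(2\varGamma_0)$. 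Then $X=2\varGamma_0 Y$ everywhere, so $\mp\{X>\varGamma_0 Y\}=1$; yet for every $\varGamma\ge\varGamma(p)$ the interval $(a,2\varGamma_0 a/\varGamma]$ contains at most one $X(k)$, so the slice bound is $\le 1/N\le e^{-\varGamma^{\delta}}$. Thus no $\varGamma_1$ depending only on the structural constants can make the ratio probability small for \emph{all} admissible laws. (It is true that $\mp\{X>\varGamma Y\}\to0$ for each fixed $u,\mu$ simply because $X/Y<\infty$ a.s., but that rate is solution-dependent and useless for a uniform Harnack constant.)

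The paper avoids this by never passing to a ratio event: it keeps the level $a$ fixed throughout and chains the three slice estimates with the elementary splitting $\mp\{A\cap B\}\le\mp\{A\cap C\}+\mp\{C^c\cap B\}$, inserting the intermediate events $\{\|u+\mu\|_{\alpha_\epsilon,Q_R(r,0)}\lessgtr a/\varGamma'\}$ at a deterministic threshold. This is exactly what turns (\ref{p thm 1}) and Proposition~\ref{S JN} into (\ref{p thm 2}), and then (\ref{p thm 2})--(\ref{p thm 3}) into the Harnack estimate, all at the same fixed $a$; the limit $\mu\downarrow0$ is then taken by Fatou. If you reorganize your union bound so that the two applications of Proposition~\ref{Local Max all p scaled} are used at levels $a$ and $a^{-1}$ (rather than at the random levels $\varGamma_1 C_\ast\|\cdot\|_{\alpha}$), your outline becomes the paper's proof. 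Your concern about verifying both clauses of the sub-solution definition for $v_\mu$ is legitimate but secondary; the paper treats it as a direct computation, and your sketch of the Itô expansion with the discarded non-positive gradient term is the right idea.
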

\begin{proof}
Without loss of generality, we first enlarge $P$ and $Q$ to be two space-time rectangular regions of the form $I_P\times B'$ and $I_Q\times B'$. We still require $I_P$ strictly after $I_Q$ and $I_Q$ strictly after $0$.

We will now consider two separate cases and prove the theorem for each of them.

\noindent{\bf Case I.} $I_P$ and $I_Q$ have the same length.

With proper scaling and translation, we can now assume $U$ contains $[0,2]\times B_1$ and pick up an $r\in(0,1)$ such that $P$ is contained in $Q_r(2,0)$ and $Q$ is contained in $Q_r(r,0)$.

We choose $R=\sqrt{r}>r$, then we have the inclusions $P\subset Q_r(2,0)\subset Q_R(2,0)\subset Q_1(2,0)$ and $Q\subset Q_r(r,0)\subset Q_R(r,0)\subset Q_1$, as shown in {\sc Figure \ref{Fig:5}}.
\begin{figure}[!htbp]
\setlength{\unitlength}{0.095in}
\centering
\begin{picture}(30,30)
\put(0,3){\framebox(24,24){$ $}}
\put(13,24){\framebox(5,2.5){$P$}}
\put(0.5,15.5){$Q_1(2,0)$}
\put(5.25,23.203125){\framebox(13.5,3.796875){$ $}}
\put(5.5,23.75){$Q_r(2,0)$}
\put(3,20.25){\framebox(18,6.75){$ $}}
\put(3.5,20.75){$Q_R(2,0)$}
\put(0.5,3.5){$Q_1$}
\put(5.25,5.953125){\framebox(13.5,3.796875){$ $}}
\put(13,7){\framebox(5,2.5){$Q$}}
\put(5.5,6.4){$Q_r(r,0)$}
\put(3,3){\framebox(18,6.75){$ $}}
\put(3.5,3.5){$Q_R(r,0)$}
\put(26,14.5){$[0,2]\times B_1$}
\put(0,15){\line(1,0){24}}
\put(-4,1){\vector(0,1){5}}
\put(-4,1){\vector(1,0){5}}
\put(12,26.8){\line(0,1){0.4}}
\put(12,9.55){\line(0,1){0.4}}
\put(10.5,10.75){$(r,0)$}
\put(10.5,28){$(2,0)$}
\put(-5,7){Time}\put(2,1){Space}
\end{picture}
\caption{Relative positions of the sets used in the proof.}
\label{Fig:5}
\end{figure}
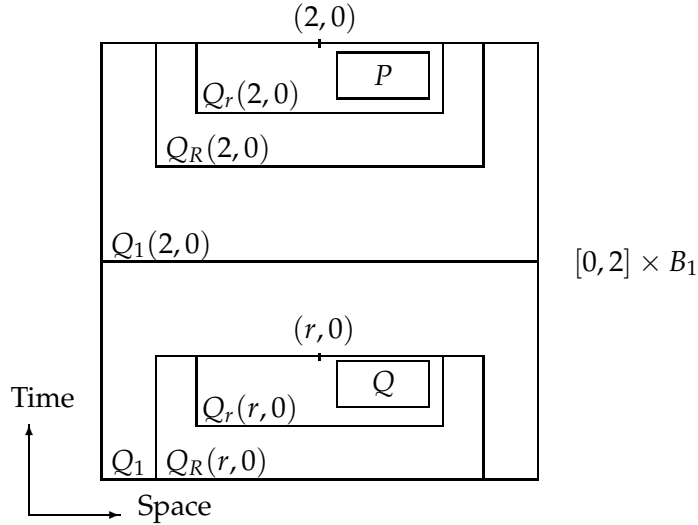

Fix any $\epsilon>0$ and let $\alpha_\epsilon$ and $K_\epsilon$ be the constants in {\sc Proposition \ref{S JN}} with $R$ in place of $t$ there.

For $\mu>0$, we write $$v_\mu=(u+\mu)^{-1},\;f^\mu(t,x,v_\mu;\omega):=f(t,x,u;\omega)v_\mu^2,\;g_i^\mu(t,x,v_\mu;\omega):=g_i(t,x,u;\omega)v_\mu^2.$$ By a direct calculation, $v_\mu(t+1,x)$ is a sub-solution of \eqref{basiceqn} on $Q_1(1,0)$ with $f$ and $g_i$ replaced by $f^\mu$ and $g_i^\mu$. We note here that $|f^\mu(v)|+\abs{g^\mu(v)}_{\ell^2}\leq\Lambda|v|$ still holds.

Applying {\sc Proposition \ref{Local Max all p scaled}} to $v_\mu(t+1,x)$ with $p=\alpha_\epsilon$ and $a$ replaced by $a^{-1}$, we can find a $\Gamma_\epsilon$ depending only on $n,\iota,\Lambda$ and $\epsilon$ such that for all $\Gamma\geq\Gamma_\epsilon$,
$$\mp\bigp{\sup_{Q_r(2,0)}v_\mu>a^{-1},\;\Gamma\norm{v_\mu}_{\alpha_\epsilon,Q_R(2,0)}\leq a^{-1}}\leq\exp\bigp{-\Gamma^{\delta(\alpha_\epsilon)}}.$$
This is equivalent to
\begin{equation}\label{p thm 1}
\mp\bigp{ \inf_{Q_{r}(2,0)} (u +\mu) < a ,\; \Gamma \norm{(u+\mu)^{-1}}_{\alpha_{\epsilon} , Q_R(2,0)} \leq a^{-1}   } \leq \exp\bigp{-\Gamma^{\delta(\alpha_\epsilon)}}.
\end{equation}

At the same time, we have an obvious inequality
\begin{align*}
\mp&\bigp{ \inf_{Q_{r}(2,0) } (u+\mu) < a ,\;\norm{u+\mu}_{\alpha_{\epsilon} , Q_R(r,0)}  \geq \Gamma K_{\epsilon} a} \\
& \leq \mp\bigp{\mathcal{F}[u+\mu, \alpha_{\epsilon}]^{1/\alpha_{\epsilon} }_{Q_R(2,0),Q_R(r,0)}  > K_{\epsilon} }\\
& + \mp  \bigp{ \inf_{Q_{r}(2,0) } (u+\mu) < a ,\;\norm{u+\mu}_{\alpha_{\epsilon},Q_R(r,0)}  \geq \mathcal{F}[u+\mu, \alpha_{\epsilon}]^{1/\alpha_{\epsilon} }_{Q_R(2,0),Q_R(r,0)}  \Gamma  a } .
\end{align*}
The first term on the right hand side is bounded by $\epsilon$ by {\sc Proposition \ref{S JN}} after taking $t=r$; the second term is equivalent to the left hand side of \eqref{p thm 1}, thus it is bounded by $\epsilon$ if $\Gamma$ is sufficiently large. Therefore the last inequality gives,
\begin{equation}
\label{p thm 2}
\mp\bigp{ \inf_{Q_{r}(2,0) } (u+\mu) < a ,\; \ \norm{u+\mu}_{\alpha_{\epsilon} , Q_R(r,0)}  \geq \Gamma K_{\epsilon} a}  \leq 2\epsilon.
\end{equation}

Now we look at $Q_R(r,0)$. On this rectangular region, $\bar{v}_\mu:=u+\mu$ is a solution of \eqref{basiceqn} with $f$ and $g_i$ replaced by $\bar{f}^\mu(t,x,\bar{v}_\mu;\omega):=f(t,x,u;\omega)$ and $\bar{g}_i^\mu(t,x,\bar{v}_\mu;\omega):=g_i(t,x,u;\omega)$. Applying {\sc Proposition \ref{Local Max all p scaled}} again, we can get another constant $\Gamma_\epsilon'$ depending only on $n,\iota,\Lambda$ and $\epsilon$ such that for all $\Gamma\geq\Gamma_\epsilon'$ and $a>0$,
\begin{equation}
\label{p thm 3}
\mp\bigp{ \sup_{Q_{r}(r,0) } (u+\mu) > a ,\; \ \Gamma\norm{u+\mu}_{\alpha_{\epsilon} , Q_R(r,0)}  \leq a} \leq 2\epsilon.
\end{equation}
From \eqref{p thm 2}, \eqref{p thm 3} and the obvious inequality
\begin{align*}
\mp&\bigp{\sup_{Q_{r}(r,0) } (u+\mu)>a,\;\ \Gamma\inf_{Q_{r}(2,0)}(u+\mu)< a}\\
&\leq \mp\bigp{\sup_{Q_{r}(r,0) } (u+\mu)>a,\;\ \norm{u+\mu}_{\alpha_\epsilon,Q_R(r,0)}\leq a/\Gamma_\epsilon'}\\
&\quad+\mp\bigp{\inf_{Q_{r}(2,0)}(u+\mu)< a/\Gamma,\;\ \norm{u+\mu}_{\alpha_\epsilon,Q_R(r,0)}\geq a/\Gamma_\epsilon'},
\end{align*}
we will have
$$\mp\bigp{\sup_{Q_{r}(r,0) } (u+\mu)>a,\;\ \Gamma\inf_{Q_{r}(2,0)}(u+\mu)< a}\leq 4\epsilon$$
if we pick $\Gamma_\epsilon'$ sufficiently large first and then let $\Gamma/\Gamma_\epsilon'$ be sufficiently large.

The last inequality implies
$$\mp\bigp{\sup_{Q_{r}(r,0) } (u+\mu)\geq 2a,\;\ \Gamma\inf_{Q_{r}(2,0)}(u+\mu)< a}\leq 4\epsilon.$$
Take $\mu=1/m$ and let $m\to\infty$, we have from Fatou's lemma,
$$\mp\bigp{\sup_{Q_{r}(r,0) }u\geq 2a,\;\ \Gamma\inf_{Q_{r}(2,0)}u< a}\leq 4\epsilon.$$
This further leads to
$$\mp\bigp{\sup_{Q_{r}(r,0) }u> 4a,\;\ \Gamma\inf_{Q_{r}(2,0)}u\leq a/2}\leq 4\epsilon,$$
which implies the desired statement if we use $\epsilon/4$ in place of $\epsilon$.

\noindent{\bf Case II.} The length of $I_P$ is different from that of $I_Q$ .

Without loss of generality we assume $I_P$ is longer, we cover $P$ by finitely many $P_i$'s of the exact same shape as $Q$. Applying the result from Case I to $P_i$ and $Q$ for all $i$, we have when $\Gamma$ is large
$$\mp\bigp{\sup_{P_i}u> a,\;\ \Gamma\inf_{Q}u\leq a}\leq \epsilon.$$
Therefore we have
$$\mp\bigp{\sup_{P_i}u> a,\;\ \Gamma\inf_{Q}u\leq a}\leq C\epsilon,$$
where $C$ is the number of rectangles used to cover $P$.
\end{proof}

We now turn to the strict positivity result, which we restate here.
\begin{theorem}
Let $u$ be a solution of the SPDE \rf{basiceqn} on $\R^+\times\mr^n$ with a (deterministic) non-negative and not identically vanishing initial condition $u (0) =u_0\in L^2(\mr^n)$. Then for probability one, $u(t,x)$ is positive for all $x\in\mr^n$ and $t>0$.
\end{theorem}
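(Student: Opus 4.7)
\emph{Plan.} My strategy is to use Theorem \ref{Main Theorem} to propagate positivity from a neighborhood of $t = 0$ to any later point, seeded by the non-triviality of $u_0$ and finished by a countable exhaustion of $(0,\infty) \times \mr^n$.

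\emph{Step 1: Seed positivity near $t = 0$.} Since $u_0 \geq 0$ is not identically zero, I fix a ball $B_* \subset \mr^n$ and a non-negative $\varphi \in C_c^\infty(B_*)$ with $M_0 := \langle u_0, \varphi \rangle > 0$. The weak formulation in the definition of a (stochastically strong) solution gives
\[
M(t) := \langle u(t), \varphi \rangle = M_0 - \int_0^t \langle \mA \nabla u, \nabla \varphi \rangle\, ds + \int_0^t \langle f, \varphi \rangle\, ds + \int_0^t \langle g_i, \varphi \rangle\, dw_s^i,
\]
and the standard pathwise continuity of the Itô integral yields a continuous modification of $M$. Hence the stopping time $T := \inf\{t \geq 0 : M(t) \leq M_0/2\}$ is almost surely strictly positive, so $\mp(T > \tau) \uparrow 1$ as $\tau \downarrow 0$. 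On $\{T > \tau\}$, integrating $M(t) \geq M_0/2$ over $t \in [\tau/2, \tau]$ and using $u, \varphi \geq 0$ gives
\[
\int_{Q_\tau} u\, dx\, dt \;\geq\; \frac{\tau M_0}{4\,\|\varphi\|_\infty}, \qquad Q_\tau := [\tau/2, \tau] \times B_*,
\]
so $\sup_{Q_\tau} u \geq M_0 / (2 |B_*| \|\varphi\|_\infty) =: a_0$, a constant independent of $\tau$.

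\emph{Step 2: Transfer positivity to a target rectangle.} Fix any bounded rectangle $P \subset (0, \infty) \times \mr^n$, and let $t_P > 0$ denote its earliest time. Given $\delta > 0$, choose $\tau = \tau(\delta) \in (0, t_P)$ with $\mp(T > \tau) \geq 1 - \delta$, and enclose $Q_\tau \cup P$ in a common bounded rectangle $U$. Since $P$ is strictly after $Q_\tau$ and $Q_\tau$ is strictly after $t = 0$ in $U$, Theorem \ref{Main Theorem} produces $\varGamma = \varGamma(\delta, P)$ such that
\[
\mp\bigp{\sup_{Q_\tau} u > a_0,\; \varGamma \inf_P u \leq a_0} \leq \delta.
\]
Combining this with Step 1 by a union bound yields $\mp\bigp{\inf_P u > a_0/\varGamma} \geq 1 - 2\delta$, so $\mp(\inf_P u > 0) \geq 1 - 2\delta$. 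As $\delta > 0$ is arbitrary, $\mp(\inf_P u > 0) = 1$ for this $P$.

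\emph{Step 3: Countable cover and main obstacle.} Take any countable family $\{P_k\}_{k \geq 1}$ of bounded rectangles covering $(0, \infty) \times \mr^n$. By Step 2, each event $\{\inf_{P_k} u > 0\}$ has probability one, and so does their countable intersection, yielding the claimed strict positivity almost surely. The only delicate ingredient is Step 1, namely the almost-sure continuity of the semi-martingale $M(t) = \langle u(t), \varphi \rangle$; this reduces to continuity of the stochastic integral $\int_0^t \langle g_i(u), \varphi \rangle\, dw_s^i$, which follows from the growth bound $|g|_{\ell^2} \leq \Lambda |u|$ together with $u \in L^2(\Omega \times I, W^{1,2})$, so that $\{\langle g_i(u), \varphi \rangle\}_i \in L^2(\Omega \times [0,T]; \ell^2)$. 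Once that is in hand, Theorem \ref{Main Theorem} does all the subsequent work.
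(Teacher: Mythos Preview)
Your argument is correct and reaches the conclusion by the same engine (Theorem~\ref{Main Theorem}), but the packaging differs from the paper's. The paper argues by contradiction: assuming positivity fails on some $[t_0/2,t_0]\times\mr^n$ with positive probability, it uses space--time continuity of $u$ to locate a rectangle $P$ with $\mp\{\inf_P u=0\}>0$, then exploits that $\|u(t)\|_{2,\mr^n}$ stays positive for a short (random) time to find a rectangle $Q$ before $P$ with $\mp\{\inf_P u=0,\ \sup_Q u>a\}>0$; the Harnack inequality then yields the contradiction. You instead give a direct proof: seed a deterministic lower bound $a_0$ for $\sup_{Q_\tau}u$ on a high-probability event via the continuity of the semimartingale $t\mapsto\langle u(t),\varphi\rangle$, transfer to an arbitrary target $P$, and cover $(0,\infty)\times\mr^n$ countably. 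Your route is slightly cleaner in that the seed level $a_0$ is deterministic and independent of $\tau$, whereas the paper has to pass through an existence argument for $a$ and $Q$; the paper's route, on the other hand, makes explicit the two ingredients you use tacitly---non-negativity of $u$ (proved there by an energy estimate on $u^-$) and pointwise continuity of $u$ in $(t,x)$ (quoted from \cite{HWW14}), both of which you invoke when writing ``$u,\varphi\ge 0$'' and when speaking of $\inf_P u$ as a pointwise quantity. One cosmetic point: from $\sup_{Q_\tau}u\ge a_0$ you only get a non-strict inequality, so in the union bound you should apply Theorem~\ref{Main Theorem} with, say, $a=a_0/2$ to ensure the event $\{\sup_{Q_\tau}u>a\}$ genuinely contains $\{T>\tau\}$.
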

\begin{proof}
We prove by contradiction. Suppose the strict positivity conclusion is false, then for some $t_0>0$, we have
\begin{equation}\label{contradiction_1}\mp\bigp{\forall x\in\mr^n, \forall t\in[\frac{t_0}{2},t_0]\bigg|\; u(t,x)>0}<1.\end{equation}

We will first prove $u$ is non-negative. This is a well known result and the method of proof is to calculate $\me\norm{u^-}_{2,\mr^n}^2$ as in Pardoux~\cite{Par07}. By formally applying It\^o's formula on $h(u)=\abs{u^-}^2$, we have,
\begin{align*}
d\Vert u^-(t)\Vert^2_{2,\mr^n}= &-2\ip{\nabla u^-(t), \mA \nabla  u^- (t) } dt+  2 \ip{g_i (u)  , u^- (t)}  dw^i_t\\
&+\left[\int_{\R^n} \lbr \abs{g (u(t))}^2  +2u^-(t) f (u(t))\rbr 1_{\{ u^- (t)>0 \}} dx\right]  dt.
\end{align*}
The justification for the application of It\^o's formula is the same as in \cite[Remark 2.3]{HWW14}.

Taking the expectation on both sides and noting the fact that $u_0$ is non-negative, we have
$$\me\norm{u^-(s)}_{2,\mr^n}^2\leq\me\int_0^s\left[\int_{\R^n} \lbr \abs{g (u(t))}^2  +2u^-(t) f (u(t))\rbr 1_{\{ u^- (t)>0 \}} dx\right]  dt.$$
Using the linear growth condition on $f$ and $g$ and Gronwall's inequality, we have $\me\norm{u^-(t)}_{2,\mr^n}^2=0$ for all $t$. This proves the non-negativity of $u$ after we recall from \cite[Theorem 1.2]{HWW14} that $u$ is continuous in both time and space after time $0$.

With this non-negativity result, \eqref{contradiction_1} can be rewritten as
\begin{equation}\label{contradiction_2}\mp\bigp{\exists x\in\mr^n, t\in[\frac{t_0}{2},t_0]\bigg|\; u(t,x)=0}>0.\end{equation}
Due to the continuity of $u$ after $t=0$, if we cover $[t_0/2,t_0]\times\mr^n$ by countably many copies of $[t_0/2,t_0]\times \{x||x|\leq 1\}$, there must be one of them, say $P$, satisfying
$$\mp\bigp{\inf_Pu=0}>0.$$
At the same time, 
since the deterministic initial condition $u_0$ is not identically vanishing, there must be a small time after $0$ where $\norm{u}_{2,\mr^n}$ stays positive (the time may vary among different $\omega$'s). Therefore if we cover $(0,t_0/3]\times\mr^n$ by countably many compact sets, there must be one of them, say $Q_0$, satisfying
$$\mp\bigp{\inf_Pu=0,\;\sup_{Q_0}u>0}>0.$$
We now further choose an $a>0$ and a space-time domain $Q\subset(0,5t_0/12]\times\mr^n$ slightly larger than $Q_0$ such that
$$\mp\bigp{\inf_Pu=0,\;\sup_Qu>a}>0.$$
However, {\sc Theorem \ref{Main Theorem}} shows that we can find a large $\varGamma$ such that,
$$\mp\bigp{\varGamma\inf_Pu\leq a,\;\sup_Qu>a}<\mp\bigp{\inf_Pu=0,\;\sup_Qu>a}.$$
This gives a contradiction since the event on the right hand side implies the event on the left hand side.
\end{proof}

\section{Local properties of the sub-solution}
In our previous work \cite{HWW14}, a stochastic variant of the classical De Giorgi's iteration has been developed for studying the global properties of solution of \eqref{basiceqn}. In this section, we will adapt this method to prove local properties for the solution. In other words, we will prove {\sc Proposition \ref{Local Max p large}}. We start with the following result.

\begin{prop}
\label{Local Max p large_pre}
Let $\epsilon$ be a constant in $(0,1]$ and $u$ be a sub-solution to
\begin{equation}\label{basiceqn with epsilon}
\partial_t u  = \divg(\mA \nabla u) + \epsilon^2f(t,x,u;\omega)+ \epsilon\smp{ g_i (t,x,u;\omega)  dw^i_{t} }
\end{equation}
in $Q_1$. Then there exist $\varGamma(0)$ and $\delta(0)$ depending only on $n,\iota$ and $\Lambda$ such that for all $a>0$ and $\varGamma\geq\varGamma(0)$,
\begin{equation}\label{Local Max p large_pre_eqn}
\mp\bigp{ \sup_{Q_{1/2} } u >  a ,\; (1/2)^{-(n+1)/2} \norm{u}_{4,2 , Q_1} \leq a/\varGamma   } \leq \exp\bigp{-\varGamma^{\delta(0)}/\epsilon^2}.
\end{equation}
\end{prop}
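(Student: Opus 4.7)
The plan is to carry out a stochastic De Giorgi iteration in a localized setting, adapting the global scheme of \cite{HWW14} through space-time cutoffs. For $k\ge 0$, I would introduce nested parabolic cylinders $Q_{r_k}$ with $r_k=\tfrac12(1+2^{-k})\downarrow\tfrac12$, rising truncation levels $a_k=a(1-2^{-k-1})\uparrow a$, and cutoffs $\chi_k\in C_c^\infty(Q_{r_k})$ satisfying $\chi_k\equiv1$ on $Q_{r_{k+1}}$, $|\nabla\chi_k|\lesssim 2^k$, $|\partial_t\chi_k|\lesssim 4^k$, together with the excess $u_k=(u-a_k)_+$. Writing $Y_k=\norm{u_k}_{4,2,Q_{r_k}}^2$, the goal is to establish on a high-probability event the superlinear recursion $Y_{k+1}\le C^k a^{-2\alpha}Y_k^{1+\alpha}$; since the starting value obeys $Y_0\le 2^{n+1}(a/\varGamma)^2$, the recursion forces $Y_k\to 0$ once $\varGamma\ge\varGamma(0)$, which gives $\sup_{Q_{1/2}}u\le a$ because $Q_{1/2}=\bigcap_k Q_{r_k}$.

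First I would derive a pathwise energy estimate by applying the It\^o formula to $t\mapsto \int\chi_k^2 u_k^2\,dx$ (justified by time-mollification exactly as in \cite[Remark 2.3]{HWW14}) and using the sub-solution inequality with test function $\phi=\chi_k^2 u_k$. Combined with $u\le u_k+a$ on $A_k:=\{u>a_k\}\cap Q_{r_k}$ and the hypothesis $|f|+|g|_{\ell^2}\le\Lambda|u|$, this yields
\begin{align*}
\sup_{\tau\le t}&\norm{(\chi_k u_k)(\tau)}_{2,B_1}^2+\iota\norm{\nabla(\chi_k u_k)}_{2,Q_{r_k}}^2\\
&\le C4^k\norm{u_k}_{2,Q_{r_k}}^2+C\epsilon^2\Lambda^2 a^2|A_k|+2\sup_{\tau\le t}|N_k(\tau)|,
\end{align*}
where $N_k(t):=\epsilon\sum_i\int_0^t\ip{g_i(u),\chi_k^2u_k}\,dw^i_s$ is a martingale with bracket pathwise controlled by $\ip{N_k}_t\lesssim\epsilon^2\Lambda^2(\norm{u_k}_{4,Q_{r_k}}^4+a^2\norm{u_k}_{2,Q_{r_k}}^2)$. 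The parabolic embedding $L^\infty_tL^2_x\hookrightarrow L^4_tL^2_x$ on a bounded time interval, combined with the Chebyshev-type bound $|A_k|\lesssim 2^{2k}a^{-2}Y_{k-1}$, converts this energy estimate into the target recursion for $Y_k$.

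The stochastic content enters through the exponential martingale inequality: for each $\lambda,\sigma>0$,
$$\mp\smp{\sup_{\tau\le t}|N_k(\tau)|\ge\lambda,\;\ip{N_k}_t\le\sigma^2}\le 2\exp\smp{-\lambda^2/(2\sigma^2)}.$$
Calibrating $\lambda_k$ as a small multiple of the deterministic side at step $k$ and exploiting the pathwise bracket bound, the recursion $Y_{k+1}\le C^k a^{-2\alpha}Y_k^{1+\alpha}$ holds off an event $G_k^c$ of conditional probability at most $2\exp(-c\varGamma^{2\delta}2^k/\epsilon^2)$; crucially, the factor $\epsilon^2$ in $\ip{N_k}$ inherited from the noise scaling in \eqref{basiceqn with epsilon} is exactly what produces $\epsilon^2$ in the denominator of the exponent. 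Summing these tail probabilities across $k$ consolidates them into a single $\exp(-\varGamma^{\delta(0)}/\epsilon^2)$ for a suitable $\delta(0)<2\delta$, which is the claimed bound.

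The chief technical obstacle is the joint calibration of the split between deterministic and martingale contributions across all iteration levels. Each $N_k$ depends on the previous truncation $u_k$ and cutoff $\chi_k$, so the martingale concentration has to be invoked conditionally on the earlier good events $G_0,\dots,G_{k-1}$; the level $\lambda_k$ must be chosen large enough that the summable tails $\sum_k\exp(-c\varGamma^{2\delta}2^k/\epsilon^2)$ remain dominated by a single $\exp(-\varGamma^{\delta(0)}/\epsilon^2)$, yet small enough to be absorbed into the contraction of the deterministic recursion without degrading the universal constants. Balancing these two demands, together with checking that no $k$-dependent constant spoils the contraction factor, is the delicate bookkeeping that pins down both $\delta(0)$ and $\varGamma(0)$.
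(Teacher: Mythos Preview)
Your proposal is correct and follows essentially the same route as the paper: a localized stochastic De Giorgi iteration with nested cylinders, rising truncation levels, an energy estimate obtained via It\^o's formula tested against $\chi_k^2 u_k$, a superlinear recursion for the $L^4_tL^2_x$ excess, exponential martingale concentration for the stochastic remainder, and a Borel--Cantelli summation over the levels. The paper organizes the bookkeeping slightly differently---it uses purely spatial cutoffs $\varphi_k$ together with time intervals $I_k$ (rather than space--time cutoffs), writes the recursion in the form $U_{k,a}\le C_0^k a^{-2\delta}(U_{k-1,a}+X^*_{k-1,a})U_{k-1,a}^{\delta}$ keeping the martingale supremum inside, and then bounds $\mathbb P(\mathcal E_k^c\cap\mathcal E_{k-1})$ through the set inclusion $\mathcal E_k^c\cap\mathcal E_{k-1}\subset\{X^*_{k-1,a}>\alpha\beta,\ U_{k-1,a}\le\beta\}$ rather than conditioning---but these are cosmetic rearrangements of the same argument.
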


The proof of the proposition is a verbatim repetition of the proof for \cite[Proposition 3.3]{HWW14} with minor adjustments.


We proceed as in \cite{HWW14}. We write $b_k=2^{-1}+2^{-k-1}$ and pick $I_k:=[1-b_k^2,1]$, a sequence of time intervals shrinking from $[0,1]$ to $[3/4,1]$. We define a sequence of smooth non-negative cut-off functions $\varphi_k$ bounded by $1$ such that $\varphi_k$ is $1$ on $B_{b_k}$ and $0$ outside of $B_{b_{k-1}}$ for $k\geq 1$ and $\varphi_0\equiv1$ on $B_1$. We also require $\varphi_k$ to have a gradient globally bounded by $n2^{k+2}$. For each $a>0$, we
write $u_{k,a}=(u-a(1-2^{-k}))^+$ and let
$$U_{k,a}:=||u_{k,a}\varphi_k||^2_{4,2,I_k\times B_1}.$$

For simplicity we denote $f(t,x,u;\omega)$ and $g_i(t,x,u;\omega)$ by $f(u)$ and $g_i(u)$, respectively. Assume $n\geq 3$ for now. We have the following iterative inequality.
\begin{prop}
\label{S itr prop} There exist constants  $C_0 = C(n, \iota, \Lambda)$ and $\delta=\delta(n, \iota, \Lambda)$ such that for $a\geq1$
\begin{equation}\label{S itr prop ineq}
U_{k,a} \leq C_0^k a^{-2\delta} \smp{U_{k-1,a} + X^*_{k-1,a}} U_{k-1,a}^{\delta},
\end{equation}
where
\begin{equation}
\label{def X}
X_{k-1,a}^* = \epsilon\sup_{1-b_{k-1}^2\leq s\leq t \leq 1}\int_s^t \ip{g_i (u(\tau)) , u_{k,a}(\tau)\varphi_k^2 }\;dw_\tau^i.
\end{equation}
\end{prop}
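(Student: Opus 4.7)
\emph{Plan.} The strategy is the stochastic De Giorgi iteration of \cite{HWW14}, adapted to the cut-off setting by morally using $\varphi_k^2 u_{k,a}$ as the test function. After the standard smoothing of the positive part, I would apply It\^o's formula to $\norm{u_{k,a}(t)\varphi_k}_{L^2(B_1)}^2$, invoking the sub-solution inequality for the drift and the second condition in the definition of a sub-solution (which pins down the quadratic variation). This produces, for $1-b_{k-1}^2 \le s \le t \le 1$, an energy inequality whose left-hand side is $\tfrac12\norm{u_{k,a}(t)\varphi_k}_{2}^2$ and whose right-hand side collects the elliptic term $-\int_s^t\ip{\mA\nabla u_{k,a},\nabla(u_{k,a}\varphi_k^2)}\,d\tau$, the drift $\epsilon^2\int_s^t\ip{f(u),u_{k,a}\varphi_k^2}\,d\tau$, the It\^o correction $\tfrac{\epsilon^2}{2}\int_s^t\ip{|g(u)|_{\ell^2}^2,\varphi_k^2\mathbf{1}_{\{u_{k,a}>0\}}}\,d\tau$, and a local martingale increment $M(t)-M(s)$ with $M(t):=\epsilon\int_0^t\ip{g_i(u),u_{k,a}\varphi_k^2}\,dw^i_\tau$.

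Next, I would process these pieces. Uniform ellipticity combined with the identity $\nabla u_{k,a}\cdot\nabla(u_{k,a}\varphi_k^2)=|\nabla u_{k,a}|^2\varphi_k^2+2u_{k,a}\varphi_k\,\nabla u_{k,a}\cdot\nabla\varphi_k$ and Cauchy--Schwarz absorption extracts $\tfrac{\iota}{2}\int_s^t\norm{\varphi_k\nabla u_{k,a}}_{2}^2\,d\tau$ on the left, at the cost of a $C\iota^{-1}\int_s^t\norm{u_{k,a}\nabla\varphi_k}_{2}^2\,d\tau$ term on the right. Linear growth bounds the $f$ and $|g|^2$ integrals by $C\Lambda^2\int_s^t\norm{(u_{k,a}+a\mathbf{1}_{\{u_{k,a}>0\}})\varphi_k}_{2}^2\,d\tau$, using $|u|\le u_{k,a}+a$ on $\{u_{k,a}>0\}$; since $\epsilon\le 1$, the deterministic $\epsilon^2$ factors are simply dropped, so that $\epsilon$ survives only inside $M$. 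The crux is the De Giorgi inclusion $\{u_{k,a}>0\}\subset\{u_{k-1,a}\ge a\cdot 2^{-k}\}$, which yields the pointwise inequality $\mathbf{1}_{\{u_{k,a}>0\}}\le(2^k/a)^{2\delta}u_{k-1,a}^{2\delta}$ for any $\delta>0$. Using $\varphi_{k-1}\equiv 1$ on $\mathrm{supp}\,\varphi_k$ and H\"older's inequality in space-time, this converts each indicator-carrying contribution into a factor $(2^k/a)^{2\delta}$ multiplied by an appropriate product of powers of $U_{k-1,a}^{1/4}$, producing exactly the prefactor $C_0^k a^{-2\delta}$ and the extra $U_{k-1,a}^{\delta}$ in \eqref{S itr prop ineq}.

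To close the iteration, I would take $\sup_{s\le t}$ on both sides. On the right the deterministic pieces are monotone in the integration window once replaced by absolute values, while $\sup_{1-b_{k-1}^2\le s\le t\le 1}(M(t)-M(s))\le 2X_{k-1,a}^*$ produces the stochastic term. On the left I obtain $\sup_{t\in I_{k-1}}\norm{u_{k,a}(t)\varphi_k}_{2}^2+\int_{I_{k-1}}\norm{\varphi_k\nabla u_{k,a}}_{2}^2\,d\tau$, which the interpolation $\norm{v}_{L^4_tL^2_x}^4\le\norm{v}_{L^\infty_tL^2_x}^2\norm{v}_{L^2_tL^2_x}^2$ together with Poincar\'e in space applied to $u_{k,a}\varphi_k$ (which vanishes outside $B_{b_{k-1}}\subset B_1$) converts into $U_{k,a}$. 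The principal obstacle I foresee is the bookkeeping around the martingale: the sup over the doubly-indexed interval $[s,t]$ must be taken \emph{before} absorbing any drift into the left-hand side, so that $X_{k-1,a}^*$ enters additively rather than as a stochastic convolution; this also forces $X_{k-1,a}^*$ to be defined with $u_{k,a}$ (not $u_{k-1,a}$) as the integrand, even though it ultimately multiplies $U_{k-1,a}^{\delta}$ in the final inequality.
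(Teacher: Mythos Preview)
Your plan contains a structural gap that prevents the target inequality \eqref{S itr prop ineq} from closing in the stated multiplicative form. The crucial feature of \eqref{S itr prop ineq} is that the \emph{entire} bracket $U_{k-1,a}+X^*_{k-1,a}$ is multiplied by the small factor $C_0^k a^{-2\delta}U_{k-1,a}^{\delta}$; in particular the stochastic term $X^*_{k-1,a}$ must carry that factor. In your scheme the factor $(2^k/a)^{2\delta}U_{k-1,a}^{\delta}$ is generated only from the indicator $\mathbf{1}_{\{u_{k,a}>0\}}$ sitting inside the drift and It\^o-correction integrals. The martingale increment $X^*_{k-1,a}$ enters the energy inequality \emph{additively} and carries no such indicator that you can pathwise extract; your final step ``$U_{k,a}\le$ energy via $\|v\|_{L^4_tL^2_x}^4\le\|v\|_{L^\infty_tL^2_x}^2\|v\|_{L^2_tL^2_x}^2$ and Poincar\'e'' therefore yields at best $U_{k,a}\le C^k U_{k-1,a}+CX^*_{k-1,a}$, with no $a^{-2\delta}U_{k-1,a}^{\delta}$ in front of $X^*_{k-1,a}$. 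That is not enough for the Borel--Cantelli argument downstream, which needs the contraction factor multiplying $X^*$.

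The paper obtains the multiplicative factor \emph{outside} the energy estimate: one first writes $\|u_{k,a}(t)\varphi_k\|_{2}^2\le \|u_{k,a}(t)\varphi_k\|_{2(n+1)/n}^2\cdot|\{u_{k,a}(t)>0\}\cap B_{b_{k-1}}|^{1/(n+1)}$, bounds the level-set measure by Chebyshev in terms of $\|u_{k-1,a}(t)\varphi_{k-1}\|_2^2$, and after a second H\"older step and the $L^p_tL^q_x$ interpolation plus the spatial Sobolev inequality arrives at
\[
U_{k,a}\le C\Bigl(\tfrac{2^k}{a}\Bigr)^{2/(n+1)}\Bigl[\sup_{t\in I_k}\|u_{k,a}(t)\varphi_k\|_2^2+\int_{I_k}\|\nabla(u_{k,a}\varphi_k)\|_2^2\,dt\Bigr]\,U_{k-1,a}^{1/(n+1)}.
\]
Only \emph{then} is the bracket bounded by $C^kU_{k-1,a}+CX^*_{k-1,a}$ via the It\^o computation, and this is what makes $X^*_{k-1,a}$ inherit the factor $a^{-2\delta}U_{k-1,a}^{\delta}$ with $\delta=\tfrac{1}{n+1}$. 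A secondary omission in your outline is the handling of the initial term $\|u_{k,a}(s)\varphi_k\|_2^2$: the paper removes it by averaging $s$ over $I_{k-1}\setminus I_k$, which you should also incorporate.
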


\begin{proof}
During this proof, the constant $C$ is enlarged from line to line as we proceed.

We note that $\Vert u_{k,a} (t)\varphi_k\Vert_{2,B_{b_{k-1}}}^2=\Vert u_{k,a} (t)\varphi_k\Vert_{2,B_1}^2$. H\"older's inequality with the conjugate exponents $(n+1)/n$ and $n+1$ gives
\begin{equation}
\Vert u_{k,a} (t)\varphi_k\Vert_{2,B_{b_{k-1}}}^2  \leq \norm{ u_{k,a} (t)\varphi_k}^{2}_{2(n+1)/n,B_{b_{k-1}}} \cdot \abs{ \lbr u_{k,a} ( t) > 0 \rbr\cap B_{b_{k-1}}}^{1/(n+1)}.
\label{holder}
\end{equation}
Using Chebyshev's inequality, we have
\[
\abs{\lbr u_{k,a} ( t) > 0\rbr\cap B_{b_{k-1}} }= \abs{\lbr u_{k-1,a} ( t) > 2^{-k}a \rbr\cap B_{b_{k-1}}}\leq
\left(\frac{2^k}a\right)^2\norm{u_{k-1,a} (t)}^2_{2,B_{b_{k-1}}}.
\]
Noting $\norm{u_{k-1,a} (t)}^2_{2,B_{b_{k-1}}}\leq \norm{u_{k-1,a} (t)\varphi_{k-1}}^2_{2,B_1}$, squaring \rf{holder} and integrating with respect to $t$ on $I_k$ we have
\[
U_{k,a}^2 \leq \left(\frac{2^k}a\right)^{4/(n+1)}\int_{I_k} \norm{u_{k,a} (t)\varphi_k}^4_{2(n+1)/n,B_{b_{k-1}}} \norm{u_{k-1,a}(t)\varphi_{k-1} }^{4/(n+1)}_{2,B_1} \,dt.
\]
Applying H\"older's inequality again with the same conjugate exponents, we obtain
\begin{equation}
\label{r2 eq 1}
\begin{split}
U_{k,a} \leq \left(\frac{2^k}a\right)^{2/(n+1)}&\times \smp{ \int_{I_k} \norm{u_{k,a}(t)\varphi_k}^{4(n+1)/n}_{2(n+1)/n,B_{b_{k-1}}} dt }^{n/2(n+1)}  \\
&\times\smp{ \int_{I_k} \norm{u_{k-1,a}(t)\varphi_{k-1}}_{2,B_1}^4\;dt  }^{1/2(n+1)}.
\end{split}
\end{equation}
The second factor is $\Vert u_{k,a}\varphi_k\Vert_{4(n+1)/n, 2(n+1)/n, I_k\times B_{b_{k-1}}}^2$, and the $L^p_tL^{q}_x$ interpolation inequality \cite[Proof of Proposition 2.1]{HWW14} leads to
\begin{equation*}
\Vert u_{k,a}\varphi_k\Vert_{4(n+1)/n, 2(n+1)/n, I_k\times B_{b_{k-1}}}^2\le\sup_{t \in  I_{k}}\norm{u_{k,a} (t)\varphi_k}_{2,B_{b_{k-1}}}^2  +  \int_{I_{k}} \norm{ u_{k,a} (t)\varphi_k }_{2n/(n-2),B_{b_{k-1}}}^2 \; dt.
\end{equation*}
Applying the Sobolev inequality on $B_1$ to the second term on the right side of the above inequality and then substituting the result in \eqref{r2 eq 1}, we obtain
\begin{equation}
\label{itr p 1}
U_{k,a} \leq C\left(\frac{2^k}a\right)^{2/(n+1)} \midp{ \sup_{t \in  I_{k}}\norm{u_{k,a} (t)\varphi_k}_{2,B_{b_{k-1}}}^2  +  \int_{I_{k}} \norm{\nabla  (u_{k,a} (t)\varphi_k )}_{2,B_{b_{k-1}}}^2dt}  U_{k-1,a}^{1/(n+1)}.
\end{equation}
after noting the fact that the third factor on the right side of \eqref{r2 eq 1} is bounded by $U_{k-1,a}^{1/(n+1)}$.

We now try to bound the right-hand side of \eqref{itr p 1}. For the same reasoning as in \cite[Remark 2.3]{HWW14}, It\^o's formula can be applied to the composition $h_k ( u(t) ) := \vert u_{k,a}(t)\varphi_k\vert^2$,
\begin{equation}\label{ito}
\begin{split}
d\Vert u_{k,a}(t)\varphi_k\Vert^2_{2,B_{b_{k-1}}}&= -2\ip{\nabla u_{k,a}(t), \mA \nabla  (u_{k,a} (t)\varphi_k^2 )} dt+  2 \ip{\epsilon g_i (u)  , u_{k,a} (t)\varphi_k^2}  dw^i_t\\
&+\left[\int_{B_{b_{k-1}}} \lbr \abs{\epsilon g (u(t))\varphi_k}^2  +2u_{k,a}(t)\varphi_k^2\epsilon^2 f (u(t))\rbr 1_{\{ u_{k,a} (t)>0 \}} dx\right]  dt.
\end{split}
\end{equation}
For the first term on the right-hand side, by using the uniform ellipticity of $A$ and the bounds on $\varphi$ and $\nabla\varphi_k$, we have
\begin{align*}
\ip{\nabla u_{k,a},\mA\nabla(u_{k,a} (t)\varphi_k^2 )}&=\ip{\nabla u_{k,a},\mA\varphi_k^2\nabla u_{k,a} (t)}+2\ip{\nabla u_{k,a},\mA u_{k,a} (t)\varphi_k\nabla\varphi_k }\\
&\geq \iota\norm{\varphi_k \nabla u_{k,a}}^2_{2,B_{1-b_{k-1}}}-\frac{\iota}{2}\norm{\varphi_k \nabla u_{k,a}}^2_{2,B_{b_{k-1}}}-C\norm{u_{k,a}\nabla\varphi_k}^2_{2,B_{b_{k-1}}}\\
&\geq \frac{\iota}{2}\norm{\varphi_k \nabla u_{k,a}}^2_{2,B_{b_{k-1}}}-C^k\norm{u_{k,a}}^2_{2,B_{b_{k-1}}}\\
&\geq \frac{\iota}{2}\norm{\varphi_k \nabla u_{k,a}}^2_{2,B_{b_{k-1}}}-C^k\norm{u_{k-1,a}\varphi_{k-1}}^2_{2,B_1}.
\end{align*}
For the third term on the right-hand side of \eqref{ito},  we observe that  if $u_{k,a}> 0$, then
$0< a \leq 2^{k} u_{k-1,a}$ and $0<u \leq u_{k-1,a} + a \leq (1+ 2^{k}) u_{k-1,a}$. By the linear growth assumption on $f$ and $g$, the third term is bounded by $C^k\Vert u_{k-1,a}\varphi_{k-1}\Vert_{2,B_1}^2\, dt$. Now, integrating \rf{ito} from $t'$ to $t$ with $t' \in  I_{k-1}\setminus I_{k}$ and $t \in  I_{k}$ and applying Cauchy-Schwartz inequality on the integral of $\norm{u_{k-1,a}\varphi_{k-1}}^2_{2,B_1}$ gives
\begin{equation*}
\begin{split}
\Vert u_{k,a}(t)\varphi_k\Vert^2_{2,B_{b_{k-1}}} + &\frac{\iota}{2} \int_{t'}^t\norm{\varphi_k\nabla u_{k,a}(s)}_{2,B_{b_{k-1}}}^2ds\\& \leq  \Vert \varphi_ku_{k,a} (t')\Vert_{2,B_{b_{k-1}}}^2
+C^k U_{k-1,a} + 2\epsilon\int_{t'}^t\ip{g_i (u(s))  , u_{k,a}(s)\varphi_k^2}dw^i_s.
\end{split}
\end{equation*}
Taking supremum over $t \in I_{k} $, we have
\begin{equation}
\label{Itr l2 2}
\begin{split}
\sup_{ t\in I_k}\Vert u_{k,a} (t)\varphi_k\Vert_{2,B_{1-b_{k-1}}}^2  &+ \int_{t_0}^{1} \norm{\varphi_k \nabla u_{k,a} (s)}_{2,B_{1-b_{k-1}}}^2 \; ds\\& \leq   C  \Vert u_{k,a} (t')\varphi_k\Vert_{2,B_{1-b_{k-1}}}^2+C^k U_{k-1,a}
+ CX^*_{k-1,a}
\end{split}
\end{equation}
with $X^*_{k-1, \alpha}$ defined in \eqref{def X}. Integrating \eqref{Itr l2 2} on $I_{k-1}\setminus I_k$ with respect to $t'$, combining the resulting inequality, \eqref{itr p 1} and the fact that
\begin{align*}
\int_{I_{k}} \norm{\nabla  (u_{k,a} (t)\varphi_k )}_{2,B_{b_{k-1}}}^2dt&\leq C\int_{I_{k}} \norm{\varphi_k\nabla  u_{k,a} (t) }_{2,B_{b_{k-1}}}^2dt+C\int_{I_{k}} \norm{u_{k,a} (t)\nabla\varphi_k}_{2,B_{b_{k-1}}}^2dt\\
&\leq C\int_{I_{k}} \norm{\varphi_k\nabla  u_{k,a} (t) }_{2,B_{b_{k-1}}}^2dt+C^k\int_{I_{k}} \norm{u_{k-1,a} (t)}_{2,B_{b_{k-1}}}^2dt\\
&\leq C\int_{I_{k}} \norm{\varphi_k\nabla  u_{k,a} (t) }_{2,B_{b_{k-1}}}^2dt+C^kU_{k-1},
\end{align*}
we obtain the desired iterative inequality \eqref{S itr prop ineq}.
\end{proof}
\begin{remark}
When $n=1$ or $2$, due to the different form of Sobolev inequality in those dimensions, we need to substitute the $1/(n+1)$ in the proof by some positive number $\delta$ between $0$ and $1/3$ and adjust the conjugate coefficients in the inequalities accordingly.
\end{remark}
We are ready to proceed to the next step, namely, comparing $X_{k,a}^*$ and $U_{k,a}$.

Consider the continuous martingale for any fixed $a>0$,
\begin{equation*} \label{spx}
X_{k,t} :=\epsilon\int_{0}^t \ip{g_i (u(s)) , u_{k+1,a}(s)\varphi_{k+1}^2} \;dw^i_s,
\end{equation*}
and recall from \rf{def X} that $X_{k,a}^* = \sup_{1-b_k^2\le s\le t\le 1}(X_{k,t}-X_{k,s})$.

\begin{lem}\label{A-X}
There exists a constant $C = C(n, \iota,\Lambda)$ such that for all positive $\alpha$, $\beta$ and $k$,
\[
\mp\lbr X_{k,a}^*\ge\alpha\beta, \;  U_{k,a} \leq\beta\rbr \leq {C} \, \exp\{ -\alpha^2/(C^k\epsilon^2)\}.
\]
\end{lem}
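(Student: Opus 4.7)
My plan is to prove the lemma by a standard stopping-time argument applied to the exponential supermartingale associated with the continuous martingale $X_k$. The key is that, although the quadratic variation of $X_k$ is random, the event $\{U_{k,a}\le\beta\}$ gives a good pathwise control of $\langle X_k\rangle$, which can be turned into a deterministic bound after localization.

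First I would estimate the quadratic variation. Since
\[
\langle X_k\rangle_t-\langle X_k\rangle_s
=\epsilon^2\sum_i\int_s^t\ip{g_i(u(\tau)),u_{k+1,a}(\tau)\varphi_{k+1}^2}^2\,d\tau,
\]
Cauchy--Schwarz gives
\[
\sum_i\ip{g_i(u),u_{k+1,a}\varphi_{k+1}^2}^2
\le\Bigl(\int|g(u)|_{\ell^2}^2\mathbf{1}_{\{u_{k+1,a}>0\}}\varphi_{k+1}^2\,dx\Bigr)\Bigl(\int u_{k+1,a}^2\varphi_{k+1}^2\,dx\Bigr).
\]
On $\{u_{k+1,a}>0\}$ one has $u\le(1+2^{k+1})u_{k,a}$, and the linear growth assumption $|g(u)|_{\ell^2}\le\Lambda|u|$ together with $\varphi_{k+1}\le\varphi_k$ and $u_{k+1,a}\le u_{k,a}$ then yields
\[
\sum_i\ip{g_i(u),u_{k+1,a}\varphi_{k+1}^2}^2\le C^k\,\norm{u_{k,a}\varphi_k}_{2,B_1}^{4}.
\]
Integrating in time gives, for any $s\in[1-b_k^2,1]$,
\[
\langle X_k\rangle_t-\langle X_k\rangle_s\le\epsilon^2 C^k\,\norm{u_{k,a}\varphi_k}_{4,2,[s,t]\times B_1}^{4},
\]
so in particular $\langle X_k\rangle_1-\langle X_k\rangle_{1-b_k^2}\le\epsilon^2 C^k U_{k,a}^{2}$.

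Next I would localize. Introduce the stopping time
\[
\tau_\beta:=\inf\bigp{t\ge 1-b_k^2:\norm{u_{k,a}\varphi_k}_{4,2,[1-b_k^2,t]\times B_1}^{2}>\beta}\wedge 1,
\]
and consider the shifted martingale $\widetilde X_{k,t}:=X_{k,t}-X_{k,1-b_k^2}$, $t\ge 1-b_k^2$. By construction the stopped martingale $\widetilde X_{k,t\wedge\tau_\beta}$ has quadratic variation bounded by $\sigma_k^2:=\epsilon^2 C^k\beta^2$ deterministically, and on the event $\{U_{k,a}\le\beta\}$ we have $\tau_\beta=1$, so that $\widetilde X_{k,t\wedge\tau_\beta}=\widetilde X_{k,t}$ on $[1-b_k^2,1]$. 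Since $X_{k,a}^{*}\le 2\sup_{t\in[1-b_k^2,1]}|\widetilde X_{k,t}|$, we get the inclusion
\[
\bigp{X_{k,a}^{*}\ge\alpha\beta,\,U_{k,a}\le\beta}\subseteq\bigp{\sup_{t\in[1-b_k^2,1]}|\widetilde X_{k,t\wedge\tau_\beta}|\ge\alpha\beta/2}.
\]

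Finally I would apply the standard exponential inequality for continuous martingales with bounded quadratic variation: Doob's inequality applied to the positive supermartingale $\exp(\lambda \widetilde X_{k,t\wedge\tau_\beta}-\tfrac{\lambda^{2}}{2}\langle\widetilde X_k\rangle_{t\wedge\tau_\beta})$ and optimization over $\lambda$ yields
\[
\mp\bigp{\sup_{t\in[1-b_k^2,1]}|\widetilde X_{k,t\wedge\tau_\beta}|\ge\alpha\beta/2}\le 2\exp\bigp{-\tfrac{(\alpha\beta/2)^{2}}{2\sigma_k^2}}=2\exp\bigp{-\tfrac{\alpha^{2}}{8C^k\epsilon^{2}}},
\]
which is the desired bound after enlarging $C$. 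There is no serious obstacle here; the only point requiring care is verifying that the pointwise bound $u\le(1+2^{k+1})u_{k,a}$ on the support of $u_{k+1,a}$ produces the correct $C^k$ dependence and that the localization via $\tau_\beta$ truly reduces the problem to a martingale with deterministic quadratic variation bound, so that the classical Gaussian tail estimate applies.
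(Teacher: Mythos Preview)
Your proof is correct and follows essentially the same approach as the paper: both first establish the key quadratic variation bound $\langle X_k\rangle_1-\langle X_k\rangle_{1-b_k^2}\le\epsilon^2 C^k U_{k,a}^2$ (the paper via Minkowski's integral inequality, you via Cauchy--Schwarz, with the same $u\le(1+2^{k+1})u_{k,a}$ observation) and then deduce the Gaussian tail. The only cosmetic difference is that the paper invokes the time-changed Brownian motion representation and cites the corresponding Brownian estimate from \cite{HWW14}, whereas you carry out the equivalent stopping-time plus exponential-supermartingale argument explicitly.
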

\begin{proof} We use $\langle X_k\rangle$ to denote the quadratic variation process of $X_{k,t}$. If we can show that there is a constant $C$ such that
\begin{equation}
\label{A-Q}
\langle X_k\rangle_1 - \langle X_k\rangle_{1-b_k^2} \leq \epsilon^2C^k U_{k,a}^2,
\end{equation}
then
$$\lbr X^*_{k, a}\ge\alpha\beta, U_{k a}\le\beta\rbr\subset\lbr \sup_{1-b_k^2\le s\le t\le 1}(X_{k,t}-X_{k,s})\ge\alpha\beta,
\langle X_k\rangle_1 - \langle X_k\rangle_{1-b_k^2}\le \epsilon^2C^k\beta^2\rbr$$
and the desired estimate follows immediately from the fact that $X_{k,t}$ is a time-changed Brownian motion and the corresponding estimate for Brownian motion, see \cite[Lemma 3.1]{HWW14} for details.  To prove \rf{A-Q}, we start with
$$\langle X_k\rangle_1 - \langle X_k\rangle_{1-b_k^2} =\epsilon^2 \sum_{i\in\mn}\int_{I_k}\langle g_i(u), u_{k+1}\varphi_{k+1}^2\rangle^2\, ds,$$
which follows from the definition of $X_{k,t}$.
We observe that  if $u_{k+1,a}> 0$, then
$0< a \leq 2^{k+1} u_{k,a}$ and $0<u \leq u_{k,a} + a \leq (1+ 2^{k+1}) u_{k,a}$. By Minkowski's inequality (integral form) and the linear growth assumption on $g$ we have
\begin{equation*}
\label{A-X p1}
\sum_{i\in \N} \smp{  \int_{\R^n } g_i (u) \; u_{k+1,a}\varphi_{k+1}^2\; d x}^2  \leq  \smp{
\int_{\R^n} |g (u)|  u_{k+1,a}\varphi_{k+1}^2 \;d x}^2 \leq C^k \smp{\int_{\R^n}
u_{k,a}^2\varphi_k^2 \;d x }^2.
\end{equation*}
Integrating over the interval $I_k$ we obtain the desired inequality \rf{A-Q}.
\end{proof}

With {\sc Lemma \ref{A-X}}, we can now use a Borel-Cantalli argument to prove {\sc Proposition \ref{Local Max p large_pre}}.
\begin{proof}[Proof of {\sc Proposition \ref{Local Max p large_pre}}]
We start with the observation that $\bigp{\norm{u^+}_{\infty, Q_{1/2}}>a} \subset G_a^c$,
where $G_a = \bigp{\lim_{k \rightarrow \infty} U_{k,a} = 0}$. Consider the events $\mathcal{E}_{k} = \{U_{k,a} \leq (a/\Gamma)^2\gamma^k \}$ for a constant $\gamma<1$ to be determined later. Since $\norm{u}_{4,2,Q_1} = \sqrt{U_{0, a}}$, it suffices to prove
\[
\mp\bigp{ G_a^c\cap\mathcal{E}_0}\le \exp\{-\varGamma^\delta/\epsilon^2\}.
\]
It is clear that
\[
G_{a}^c \subset \bigcup_{k \geq 0} \mathcal{E}_{k}^c  \subset  \mathcal{E}_{0}^c \cup \midp{ \bigcup_{k \geq 1} \smp{ \mathcal{E}_{k}^c \cap \mathcal{E}_{k-1} } },
\]
which implies
\begin{equation}
\label{Itr Nor 1}
\mp\lbr G_{a}^c \cap \E_{0}\rbr \leq \sum_{k \geq 1} \mp \lbr\mathcal{E}_{k}^c \cap \mathcal{E}_{k-1} \rbr.
\end{equation}
We estimate the probability  $\mp\lbr\mathcal{E}_{k}^c \cap \mathcal{E}_{k-1}\rbr $.

Let $\alpha = (2C)^{k/2}\varGamma^\delta$ and $\beta =  a^2\gamma^{k-1}\varGamma^{-2}$ in {\sc Lemma} \ref{A-X}. If $ X^*_{k-1,a} \leq \alpha\beta$ and $U_{k-1,a} \leq \beta $, then by the iterative inequality \rf{S itr prop ineq} in {\sc Proposition \ref{S itr prop}} we have (after canceling $a^{2\delta}$!)
$$U_{k,a} \le\frac{C_0^k}{a^{2\delta}}(\beta + \alpha\beta)\beta^{\delta} = \frac{(C_0\gamma^{\delta})^k(1+(2C)^{k/2}\varGamma^\delta)}{\gamma^{1+\delta}\varGamma^{2\delta}}\cdot \gamma\beta\le\gamma\beta.$$
The last inequality holds if we choose $\gamma$ sufficiently small such that $(C_1\gamma^{\delta})^k(1+(2C)^{k/2}\varGamma^\delta)\le \varGamma^\delta$ for all $k\ge 1$ and $\varGamma\ge 1$ and then $\varGamma$ sufficiently large such that $\gamma^{1+\delta}\varGamma^\delta\ge 1$.

Now the above inequality implies that $\mathcal{E}_{k}^c \cap \mathcal{E}_{k-1 }  \subset \{ X^*_{k-1,a} >\alpha\beta, U_{k-1,a} \leq \beta \} $. Its probability is estimated by  {\sc Lemma} \ref{A-X} and we have
$$\mp \lbr \mathcal{E}_{k}^c \cap \mathcal{E}_{k-1} \rbr  \leq C \exp\bigp{-\alpha^2/(C^k\epsilon^2)} = C\exp\bigp{-2^{k}\varGamma^{2\delta}/\epsilon^2}.$$
Using this in \eqref{Itr Nor 1} we obtain, again for sufficiently large $\varGamma$,
\[
\mp\lbr G_{a}^c \cap \mathcal{E}_{0}^c\rbr \leq C\sum_{k=1}^{\infty} \exp\bigp{-2^{k} \varGamma^{2\delta}/\epsilon^2} \leq   \exp\bigp{- \varGamma^{\delta}/\epsilon^2 }.
\]
By lowering $\delta$ a little bit, we have proved \eqref{Local Max p large_pre_eqn} for all $a>0$.
\end{proof}

\begin{remark}
The statements proved so far in this section do not require the initial condition of $u$ to be deterministic.
\end{remark}

At this moment, we are just one step away from {\sc Proposition \ref{Local Max p large}}. For the ease of reference, we restate our goal here.
\begin{prop}
\label{Local Max 4 2}
Let $u$ be a sub-solution to \eqref{basiceqn}
in $Q_1$. Then there exist $\Gamma(0)$ and $\delta(0)$ depending only on $n,\iota$ and $\Lambda$ such that for all $a>0$, $r\in(0,1]$, and $\Gamma\geq\Gamma(0)$,
\begin{equation}\label{Ineq_infinity/4 2 baby case}
\mp\bigp{ \norm{u}_{\infty,Q_{r/2} }>  a ,\; \Gamma (r/2)^{-(n+1)/2} \norm{u}_{4,2 , Q_r} \leq a   } \leq \exp\bigp{-\Gamma^{\delta(0)}/r^2}.
\end{equation}
\end{prop}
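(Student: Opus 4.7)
The plan is to reduce {\sc Proposition \ref{Local Max 4 2}} to {\sc Proposition \ref{Local Max p large_pre}} via a parabolic rescaling $Q_r \to Q_1$ that converts the SPDE on $Q_r$ into an instance of \eqref{basiceqn with epsilon} on $Q_1$ with $\epsilon = r$. The only stochastic ingredient is the Brownian scaling $\int_{t_0}^{t_0+r^2 s} h_\tau\, dw^i_\tau = r\int_0^s h_{t_0+r^2\sigma}\, d\tilde w^i_\sigma$; the rest is a change of variables in the weak formulation.

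Concretely, given a sub-solution $u$ on $Q_r = (1-r^2,1]\times B_r$, I would set
\[
\tilde u(s,y) := u(1-r^2+r^2 s,\, ry), \qquad \tilde w^i_s := r^{-1}\bigl(w^i_{1-r^2+r^2 s} - w^i_{1-r^2}\bigr),
\]
so that the $\tilde w^i$ are independent standard Brownian motions for the filtration $\tilde \msf_s := \msf_{1-r^2+r^2 s}$. Writing $\tilde \mA, \tilde f, \tilde g$ for the obvious space-time pullbacks of $\mA, f, g$, the rescaled $\tilde\mA$ retains uniform ellipticity with the same $\iota$, and $\tilde f, \tilde g$ still obey the linear-growth bound with the same $\Lambda$. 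A direct change of variables in the weak form, together with the Brownian scaling identity above, shows that $\tilde u$ is a sub-solution of \eqref{basiceqn with epsilon} on $Q_1$ with $\epsilon = r \in (0,1]$, so the hypotheses of {\sc Proposition \ref{Local Max p large_pre}} are satisfied.

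Applying that proposition to $\tilde u$ yields
\[
\mp\bigp{\sup_{Q_{1/2}}\tilde u > a,\; (1/2)^{-(n+1)/2}\|\tilde u\|_{4,2,Q_1} \leq a/\varGamma} \leq \exp(-\varGamma^{\delta(0)}/r^2).
\]
To translate back, note $\sup_{Q_{1/2}}\tilde u = \|u\|_{\infty,Q_{r/2}}$, while the Jacobian computation for the $L^{4,2}$-norm gives
\[
\|\tilde u\|_{4,2,Q_1}^4 \;=\; \int_0^1\!\Bigl(\int_{B_1}\tilde u^2\,dy\Bigr)^{\!2}\!d\sigma \;=\; r^{-2n-2}\,\|u\|_{4,2,Q_r}^4,
\]
so that $(1/2)^{-(n+1)/2}\|\tilde u\|_{4,2,Q_1} = (r/2)^{-(n+1)/2}\|u\|_{4,2,Q_r}$. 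Substituting these two identities into the displayed inequality produces exactly \eqref{Ineq_infinity/4 2 baby case}.

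The main potential obstacle is not a deep difficulty but careful bookkeeping in the rescaling argument: one has to verify that both the defining inequality and the quadratic variation identity in the definition of sub-solution survive the pullback, and that $\tilde \mA, \tilde f, \tilde g$ remain progressively measurable relative to $\tilde\msf$. Once the scaled Brownian motions $\tilde w^i$ and the test-function transformation $\varphi(x)\leftrightarrow\tilde\varphi(y)=\varphi(ry)$ are in place, each of these checks reduces to matching Jacobians and using the definition of It\^o integration relative to the time-changed filtration.
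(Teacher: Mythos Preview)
Your approach is essentially the paper's: both perform the parabolic rescaling $u_r(t,x)=u(r^2t,rx)$ (you additionally make the time shift and Brownian rescaling explicit) to land in the setting of {\sc Proposition \ref{Local Max p large_pre}} with $\epsilon=r$, and then undo the change of variables in the norms. The one small discrepancy is that {\sc Proposition \ref{Local Max p large_pre}} controls $\sup_{Q_{1/2}}\tilde u$, not $\|\tilde u\|_{\infty,Q_{1/2}}$; the paper closes this gap with the remark ``By considering $-u$ instead of $u$,'' whereas you silently identified the two---a harmless slip since every later application is to non-negative sub-solutions.
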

\begin{proof}
For any $r\in(0,1]$, we write $u_r(t,x;\omega):=u(r^2t,rx;\omega)$. By a direct calculation, $u_r$ satisfies equation
$$\partial_t{u_r}(t,x)=\divg(A\nabla u_r(t,x))+r^2f(r^2t,rx,u_r(t,x);\omega)+rg_i(r^2t,rx,u_r(t,x);\omega)\dot{w}_t^i,$$
therefore \eqref{Local Max p large_pre_eqn} gives the desired result with supremum in place of $\norm{\cdot}_\infty$.
By considering $-u$ instead of $u$, we have our desired inequality.
\end{proof}
\begin{remark}
As can be observed from the proof, for a fixed $r\leq 1$, the inequality \eqref{Ineq_infinity/4 2 baby case} will hold as long as $u$ is a sub-solution of \eqref{basiceqn} on $Q_r$.
\end{remark}

\section{From $||\cdot||_{4,2,\cdot}$ to $||\cdot||_{p,\cdot}$}

We have controlled the tail of $\norm{u}_{\infty,Q_{r/2}}$ by the
distribution of $\norm{u}_{4,2,Q_r}$ in the last section. We now
improve the control by lowering the $4,2$-norm to any small $p$-norm in this
section. We will prove {\sc Proposition \ref{Local Max all p scaled}}, which we restate here.

\begin{prop}
\label{Local Max all p scaled_copy} Let $u$ be a sub-solution to
\eqref{basiceqn} in $Q_1$. For every $2\geq p > 0$, there exist constants
$\varGamma(p)$ and $\delta(p)$ depending only on $n, \iota, \Lambda$ and $p$
such that for all $a>0$, $\varGamma \geq \varGamma(p)$ and $0<r<R\leq 1$,
\[
\mp\bigp{ \norm{u}_{\infty,Q_r } >  a ,\;
(R-r)^{-(n+2)/p} \norm{u}_{p, Q_R} \leq a/\varGamma } \leq
\exp\bigp{-\varGamma^{\delta(p)}/R^2}.
\]
\end{prop}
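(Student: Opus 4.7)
The plan is to run a Moser-style iteration using Proposition \ref{Local Max 4 2} as the basic building block, trading the $(4,2)$-norm on its right-hand side for an $L^{p}$-norm via an elementary interpolation. The interpolation I would use is: for $0<p\leq 2$ and any parabolic cylinder $Q=I\times B$,
\[
\norm{u}_{4,2,Q}\leq |B|^{1/4}\,\norm{u}_{\infty,Q}^{1-p/4}\,\norm{u}_{p,Q}^{p/4},
\]
which is an immediate consequence of $u^{2}\leq\norm{u}_{\infty}^{2-p}u^{p}$, squaring the resulting spatial $L^{2}$ bound, dispatching one factor of $\int u^{p}$ by $|B|\,\norm{u}_{\infty}^{p}$, and integrating in time.

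Fix $0<r<R\leq 1$ and let $r_{k}:=R-(R-r)2^{-k}$, so that $r_{0}=r$ and $r_{k}\uparrow R$; set $\sigma_{k}:=r_{k+1}-r_{k}=(R-r)2^{-(k+1)}$ and $M_{k}:=\norm{u}_{\infty,Q_{r_{k}}}$. For every $(t_{*},x_{*})\in Q_{r_{k}}$ the cylinder $Q_{\sigma_{k}}(t_{*},x_{*})$ is contained in $Q_{r_{k+1}}$. Covering $Q_{r_{k}}$ by $N_{k}\lesssim (R/\sigma_{k})^{n+2}$ sub-cylinders $Q_{\sigma_{k}/2}(t_{*},x_{*})$, applying Proposition \ref{Local Max 4 2} (via the parabolic rescaling by $\sigma_{k}$) on each, and taking a union bound produces an event $A_{k}$ with $\mp(A_{k}^{c})\leq N_{k}\exp\bigp{-\Gamma_{k}^{\delta(0)}/\sigma_{k}^{2}}$ on which
\[
M_{k}\leq C\Gamma_{k}\,\sigma_{k}^{-(n+1)/2}\,\max_{(t_{*},x_{*})}\norm{u}_{4,2,Q_{\sigma_{k}}(t_{*},x_{*})}.
\]
Applying the interpolation on each \emph{small} cylinder $Q_{\sigma_{k}}(t_{*},x_{*})$ --- not on the larger $Q_{r_{k+1}}$ --- and using $\norm{u}_{\infty,Q_{\sigma_{k}}(t_{*},x_{*})}\leq M_{k+1}$ together with $\norm{u}_{p,Q_{\sigma_{k}}(t_{*},x_{*})}\leq\Lambda:=\norm{u}_{p,Q_{R}}$, the spatial-volume factor $\sigma_{k}^{n/4}$ combines with $\sigma_{k}^{-(n+1)/2}$ to give the sharp recursion
\[
M_{k}\leq C\Gamma_{k}\,\sigma_{k}^{-(n+2)/4}\,M_{k+1}^{\,1-p/4}\,\Lambda^{p/4}.
\]

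Taking logarithms turns this into a backward affine recursion with contraction factor $\theta:=1-p/4\in[1/2,1)$. Iterating and letting $K\to\infty$, the tail $\theta^{K}\log M_{K}$ vanishes since $M_{K}\leq\norm{u}_{\infty,Q_{R}}<\infty$ almost surely (by Proposition \ref{Local Max 4 2} at the coarsest scale), while $\sum_{k}\theta^{k}\cdot\tfrac{n+2}{4}\log\sigma_{k}^{-1}$ sums to $\tfrac{n+2}{p}\log(R-r)^{-1}$ plus a constant depending on $n,p$ (using $\tfrac{1}{1-\theta}=4/p$), and $\sum_{k}\theta^{k}(1-\theta)=1$ converts $\Lambda^{p/4}$ back into $\Lambda$. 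Thus on $\bigcap_{k\geq 0}A_{k}$,
\[
\norm{u}_{\infty,Q_{r}}\leq C(n,p)\,\Gamma_{*}\,(R-r)^{-(n+2)/p}\,\Lambda,\qquad \Gamma_{*}:=\prod_{k\geq 0}\Gamma_{k}^{\theta^{k}},
\]
and the contrapositive is the claimed tail estimate.

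The hard part will be calibrating $\{\Gamma_{k}\}$ so that simultaneously (i) the effective constant $\Gamma_{*}$ is a fixed power of a single free parameter $\Gamma$, and (ii) the total failure probability $\sum_{k}N_{k}\exp\bigp{-\Gamma_{k}^{\delta(0)}/\sigma_{k}^{2}}$ is dominated by $\exp\bigp{-\Gamma^{\delta(p)}/R^{2}}$. Because $\sigma_{k}^{-2}\geq 4^{k}/R^{2}$ (using $R-r\leq R\leq 1$) already carries geometric growth in $k$ while $N_{k}$ grows only polynomially in $2^{k}$, the choice $\Gamma_{k}=\Gamma\cdot 2^{\beta k}$ for a small $\beta>0$ works: it gives $\Gamma_{*}\sim\Gamma^{1/(1-\theta)}=\Gamma^{4/p}$, each summand in the probability bound becomes at most $\exp\bigp{-c\,2^{k(\beta\delta(0)+2)}\Gamma^{\delta(0)}/R^{2}}$, and the relabeling $\Gamma_{\mathrm{new}}:=\Gamma^{4/p}_{\mathrm{old}}$ that turns $\Gamma_{*}$ into the proposition's free parameter leaves $\delta(p)=p\delta(0)/4$ as the final tail exponent. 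The passage from $\sup u$ to $\norm{u}_{\infty}$ is handled exactly as in the proof of Proposition \ref{Local Max 4 2}: applying the same argument to $-u$ viewed as a sub-solution of \eqref{basiceqn} with $(f,g)$ replaced by $(-f(-\cdot),-g(-\cdot))$, which still satisfies the linear growth hypothesis.
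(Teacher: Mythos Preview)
Your high-level strategy---Moser/Lin iteration with an interpolation between the $(4,2)$-norm and the $p$-norm---is essentially the same as the paper's (Section~5 cites exactly Lin's iteration and the interpolation you write down is a special case of \eqref{Interpolation Time}--\eqref{Interpolation Space}). The exponent bookkeeping you do is correct: the factor $\sigma_k^{n/4}$ from interpolating on the \emph{small} cylinder combines with $\sigma_k^{-(n+1)/2}$ to give $\sigma_k^{-(n+2)/4}$, and the geometric series in $\theta=1-p/4$ turns this into $(R-r)^{-(n+2)/p}$.

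There is, however, a genuine gap in your probabilistic handling. You write that Proposition~\ref{Local Max 4 2} ``produces an event $A_k$ with $\mp(A_k^c)\le N_k\exp\{-\Gamma_k^{\delta(0)}/\sigma_k^2\}$ on which $M_k\le C\Gamma_k\sigma_k^{-(n+1)/2}\max\|u\|_{4,2,Q_{\sigma_k}(t_*,x_*)}$.'' Proposition~\ref{Local Max 4 2} does not give this. It says that for each \emph{fixed deterministic} threshold $a$, the event $\{X>a,\ \Gamma Y\le a\}$ has small probability; it does \emph{not} say that the pathwise comparison event $\{X>\Gamma Y\}$ has small probability. The latter is an uncountable union over $a$ of the former, and a uniform-in-$a$ bound gives no control on it. Your recursion $M_k\le C\Gamma_k\sigma_k^{-(n+2)/4}M_{k+1}^{1-p/4}\Lambda^{p/4}$ is exactly a pathwise comparison of this type, and is therefore unjustified as written.

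This is precisely the difficulty the paper isolates. Its proof never passes to a pathwise inequality: it stays at the level of fixed-threshold tail probabilities, introducing deterministic levels and events $\mathcal S_i$ and using Lemma~\ref{Prob lemma} (which upgrades $\mp\{X>b,\,NY\le b\}$ to $\mp\{X+Z>b,\,NY+Z\le b\}$ when $Y\ge KZ$) inside a Borel--Cantelli telescope. Your argument can be repaired the same way: fix the target $a$, choose deterministic thresholds $b_k$ with $b_0=a$ satisfying $b_k=C\Gamma_k\sigma_k^{-(n+2)/4}\,b_{k+1}^{\,1-p/4}(a/\Gamma)^{p/4}$, and bound
\[
\mp\{M_0>a,\ \Lambda\le a/\Gamma\}\le\sum_{k\ge 0}\mp\{M_k>b_k,\ M_{k+1}\le b_{k+1},\ \Lambda\le a/\Gamma\}+\mp\{M_K>b_K\}.
\]
On each summand the conditions $M_{k+1}\le b_{k+1}$ and $\Lambda\le a/\Gamma$ force, via your interpolation, the $(4,2)$-norm on every small cylinder below $b_k/\Gamma_k$, so Proposition~\ref{Local Max 4 2} applies at the deterministic level $b_k$ and your union bound is legitimate. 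The calibration of $\Gamma_k$ you sketch then finishes the job, but this Borel--Cantelli structure---not merely the summability of failure probabilities---is the missing ingredient.
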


In fact, we will prove a more general result.

\begin{prop}
\label{Local Max all p q scaled} Let $u$ be a sub-solution to
\eqref{basiceqn} in $Q_1$. For all $4\geq p > 0$ and $2\geq q>0$, there exist constants
$\varGamma(p,q)$ and $\delta(p,q)$ depending only on $n, \iota, \Lambda$ and the pair $(p,q)$
such that for all $a>0$, $\varGamma \geq \varGamma(p,q)$ and $0<r<R\leq 1$,
\[
\mp\bigp{ \norm{u}_{\infty,Q_r } >  a ,\;
(R-r)^{-(n/q+2/p)} \norm{u}_{p,q, Q_R} \leq a/\varGamma } \leq
\exp\bigp{-\varGamma^{\delta(p,q)}/R^2}.
\]
\end{prop}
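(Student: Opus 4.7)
The plan is to bootstrap Proposition \ref{Local Max 4 2} by a stochastic Moser-type iteration on a sequence of shrinking parabolic cubes. The argument mirrors the classical deterministic Moser improvement from an $L^\infty$--$L^{p_0}$ bound to an $L^\infty$--$L^p$ bound for $p<p_0$, except that each level inequality is now a tail-probability estimate and a union bound collects the still-exponential tails at the end.

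First I would upgrade Proposition \ref{Local Max 4 2} to a flexible version permitting arbitrary $0<\rho<R\leq 1$ in place of the fixed ratio $\rho=r/2$: by translation invariance in space-time and by covering $Q_\rho$ with $N\lesssim(\rho/(R-\rho))^{n+2}$ sub-rectangles of the form $Q_{(R-\rho)/2}(t_j,x_j)$ whose parents $Q_{R-\rho}(t_j,x_j)$ are contained in $Q_R$, a union bound (absorbing the polynomial factor $N$ into the exponential at the cost of slightly reducing $\delta(0)$ and raising the threshold on $\varGamma$) yields
\[
\mp\bigp{\norm{u}_{\infty,Q_\rho}>a,\;\varGamma(R-\rho)^{-(n+1)/2}\norm{u}_{4,2,Q_R}\leq a}\leq\exp\bigp{-\varGamma^{\delta(0)}/R^2}.
\]

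Next, I would derive a H\"older-type interpolation
\[
\norm{u}_{4,2,Q_R}\leq C\,R^{\eta(p,q)}\,\norm{u}_{\infty,Q_R}^{1-\theta(p,q)}\,\norm{u}_{p,q,Q_R}^{\theta(p,q)}
\]
with positive $\theta(p,q)$, by interpolating first in space and then in time. Setting $r_k=r+(R-r)2^{-k}$, $\varphi_k=\norm{u}_{\infty,Q_{r_k}}$, $A=\norm{u}_{p,q,Q_R}$, and combining the flexible baseline with the interpolation and the inclusion $\norm{u}_{p,q,Q_{r_k}}\leq A$, one obtains, off an event $E_k$ of probability at most $\exp(-\varGamma^{\delta(0)}/R^2)$,
\[
\varphi_{k+1}\leq C\,\varGamma\,(r_k-r_{k+1})^{-(n+1)/2}\,r_k^{\eta}\,\varphi_k^{1-\theta}\,A^{\theta}.
\]
Young's inequality with exponents $1/(1-\theta)$ and $1/\theta$ followed by the standard iteration lemma for geometrically shrinking radii then closes the iteration. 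Off $\bigcup_k E_k$, whose probability is still bounded by $\exp(-\varGamma^{\delta(p,q)}/R^2)$ after slight reduction of $\delta$, this produces $\norm{u}_{\infty,Q_r}\leq a$ under the hypothesis $(R-r)^{-(n/q+2/p)}A\leq a/\varGamma$.

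The main technical obstacle is arranging the exponents so that the final $(R-r)^{-1}$-power matches the optimal $n/q+2/p$ rather than the cruder value $(n+1)/(2\theta)$ produced by a single step of interpolation, since the point $(1/p,1/q)$ generally does not lie on the Calder\'on interpolation line joining $(0,0)$ and $(1/4,1/2)$. This is handled by iterating the interpolation through an intermediate exponent pair $(p',q')$ on the parabolic scaling line $p'=2q'$, so that $L^{4,2}$ lies on the Calder\'on line between $L^\infty$ and $L^{p',q'}$, and then bootstrapping $\norm{u}_{p',q',\cdot}$ from $\norm{u}_{p,q,\cdot}$ by a preliminary Moser-type iteration combining embedding with a Cacciopoli-type energy estimate. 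The probabilistic $k$-dependent weights must be chosen so that $\sum_k\exp(-\varGamma^{\delta(0)}/R^2)$ remains of the claimed form $\exp(-\varGamma^{\delta(p,q)}/R^2)$, paralleling the construction used in the already-stated Proposition \ref{Local Max all p scaled_copy}.
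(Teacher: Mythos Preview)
Your first step---upgrading Proposition~\ref{Local Max 4 2} to arbitrary radii $0<\rho<R\le1$ by a covering argument and absorbing the polynomial count into the exponential---is correct and is exactly what the paper does (Proposition~\ref{Local Max 4 2 scaled}).

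The central step, however, contains a genuine gap. You write that ``off an event $E_k$ of probability at most $\exp(-\varGamma^{\delta(0)}/R^2)$'' the pathwise inequality $\varphi_{k+1}\le C\varGamma(r_k-r_{k+1})^{-(n+1)/2}r_k^{\eta}\varphi_k^{1-\theta}A^{\theta}$ holds. But the baseline only gives, for each \emph{fixed} threshold $a$, a bound on $\mp\{\varphi_{k+1}>a,\ \text{(RHS)}\le a\}$; it does \emph{not} furnish a single small event off which $\varphi_{k+1}\le\text{(RHS)}$ holds pathwise. Converting ``$\mp\{X>a,\ Y\le a\}\le\epsilon$ for all $a$'' into ``$\mp\{X>Y\}\le C\epsilon$'' is false in general: if $\log X$ is spread over a long interval and $Y=X/2$, the left side can be made arbitrarily small while the right side is $1$. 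Thus the event $E_k$ you need cannot be chosen independently of $a$, and your union bound $\bigcup_k E_k$ collapses. The subsequent appeal to ``Young's inequality followed by the standard iteration lemma'' presupposes exactly the pathwise recursion that is unavailable.

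The paper never attempts to leave the tail-probability framework. It introduces a probabilistic composition lemma (Lemma~\ref{Prob lemma}) that, given $\mp\{X>b,\ NY\le b\}\le e^{-gN^\delta}$ and a deterministic relation $Y\ge KZ$, produces a bound on $\mp\{X+Z>b,\ NY+Z\le b\}$. This lets one build nested events $\mathcal S_i=\{2^{-i}\|u\|_{\infty,Q_{r_i}}+\sum_{j\le i}c_j\|u\|_{\beta,q,Q_{r_j}}>a\}$ and estimate $\mp(\mathcal S_i\cap\mathcal S_{i+1}^c)$ directly, so that a Borel--Cantelli sum closes the argument without ever asserting a pathwise recursion. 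Moreover, the interpolation is done one index at a time (Lemmas~\ref{Exponent Reduction Time} and~\ref{Exponent Reduction Space}), and each application only lowers the exponent from $\alpha$ to some $\beta\in(\alpha/2,\alpha)$---the constraint $\gamma=\alpha/\beta-1<1$ is needed for the geometric series in the $c_j$ to converge. Reaching an arbitrary $(p,q)$ from $(4,2)$ therefore requires finitely many iterations of these two lemmas, each followed by the covering upgrade; your proposed single-shot interpolation with an auxiliary ``Cacciopoli-type'' step does not correspond to this and would in any case inherit the same pathwise-vs-tail obstruction.
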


We will prove the above proposition as following. We will first strengthen {\sc Proposition \ref{Local Max 4 2}} by a covering argument, and then introduce two lemmas whose proofs will be postponed to the next section. After these, we will use the strengthened result as a starting point and repeatedly apply the two lemmas to prove {\sc Proposition \ref{Local Max all p q scaled}}.
\bigskip

Our strengthen version of {\sc Proposition \ref{Local Max 4 2}} takes the following form.
\begin{prop}
\label{Local Max 4 2 scaled} Let $u$ be a sub-solution to
\eqref{basiceqn} in $Q_1$, then there exist $\varGamma'(0),
\delta'(0)$ depending on only $n,\iota$, and $\Lambda$ such that for every $a>0$, $\varGamma \geq \varGamma'(0)$ and
$0<r<R\leq 1$,
\begin{equation}\label{Ineq_infinity/4 2}
\mp\bigp{ \norm{u}_{\infty,Q_r } >  a ,\;
(R-r)^{-(n+1)/2} \norm{u}_{4,2, Q_R} \leq a/\varGamma } \leq
\exp\bigp{\varGamma^{\delta'(0)}/R^2}.
\end{equation}
\end{prop}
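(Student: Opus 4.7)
The plan is a covering argument that reduces the estimate on $Q_r$ to finitely many instances of the baby case stated as Proposition \ref{Local Max 4 2}. The key input is the remark at the end of Section 3: by the translation invariance of \eqref{basiceqn} in space and time, the baby case yields, for any $\rho\le 1$ and any base point $(t_0,x_0)$ with $Q_\rho(t_0,x_0)$ lying in the sub-solution's domain,
\[
\mp\bigp{\norm{u}_{\infty,Q_{\rho/2}(t_0,x_0)}>a,\;\varGamma(\rho/2)^{-(n+1)/2}\norm{u}_{4,2,Q_\rho(t_0,x_0)}\le a}\le\exp\bigp{-\varGamma^{\delta(0)}/\rho^2}.
\]

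Set $\rho:=R-r\in(0,1]$ and choose a finite lattice of base points $(t_i,x_i)$, $i=1,\dots,N$, with time spacing $\rho^2/4$ and spatial spacing $\rho/2$, arranged so that $\{Q_{\rho/2}(t_i,x_i)\}_{i=1}^N$ covers $Q_r$. A direct count gives $N\le C_n(r/\rho)^{n+2}\le C_n(R-r)^{-(n+2)}$, using $r\le 1$. The choice $\rho=R-r$ is precisely what makes the enlarged boxes fit: $r+\rho\le R$ forces $B_\rho(x_i)\subset B_R$, and $\rho^2\le(R-r)(R+r)=R^2-r^2$ forces $t_i-\rho^2\ge 1-R^2$ even when $(t_i,x_i)$ sits near the earliest-time corner of $Q_r$. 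Hence $Q_\rho(t_i,x_i)\subset Q_R\subset Q_1$ for every $i$, and in particular $\norm{u}_{4,2,Q_\rho(t_i,x_i)}\le\norm{u}_{4,2,Q_R}$.

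Applying the baby case at each $(t_i,x_i)$, using the monotonicity of the $L^{4,2}$-norm in the domain, and taking a union bound (absorbing the harmless factor $2^{(n+1)/2}$ into $\varGamma$) yields
\[
\mp\bigp{\norm{u}_{\infty,Q_r}>a,\;\varGamma(R-r)^{-(n+1)/2}\norm{u}_{4,2,Q_R}\le a}\le C_n(R-r)^{-(n+2)}\exp\bigp{-\varGamma^{\delta(0)}/(R-r)^2}.
\]
The final step is to absorb the polynomial prefactor into the exponential and to replace $(R-r)^{-2}$ by $R^{-2}$ in the exponent. Since $s\mapsto s^2\log(1/s)$ is bounded on $(0,1]$, for any $\delta'(0)<\delta(0)$ and all sufficiently large $\varGamma$ one has
\[
C_n(R-r)^{-(n+2)}\exp\bigp{-\varGamma^{\delta(0)}/(R-r)^2}\le\exp\bigp{-\varGamma^{\delta'(0)}/R^2}
\]
uniformly in $0<r<R\le 1$; the passage from $(R-r)^{-2}$ to $R^{-2}$ simply uses $R\ge R-r$.

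The main obstacle, such as it is, is purely geometric bookkeeping: verifying the precise inequalities that guarantee every $Q_\rho(t_i,x_i)$ sits inside $Q_R$, with $\rho=R-r$ being the correct matching scale. The polynomial-vs-exponential trade-off at the end is routine and needs only that $\varGamma$ be taken sufficiently large, which is already part of the conclusion.
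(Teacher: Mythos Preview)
Your argument is correct and is essentially the paper's own proof. The only cosmetic difference is that the paper splits into the cases $\theta=r/R\le 1/2$ (where $Q_r\subset Q_{R/2}$ and a single application of the baby case on $(Q_{R/2},Q_R)$ suffices) and $\theta>1/2$ (where the covering argument you give is used), whereas you handle both regimes at once with the covering. One small remark: your intermediate count $N\le C_n(r/\rho)^{n+2}$ is not literally true when $r<\rho$, but the final bound $N\le C_n(R-r)^{-(n+2)}$ is correct since $R-r\le 1$, so this is harmless.
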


\begin{proof}

From \eqref{Ineq_infinity/4 2 baby case}, we have for all $0<R\leq 1$,
$$
\mp\bigp{\norm{u}_{\infty,Q_{R/2}}>a,\;{(R/2)}^{-(n+1)/2}\norm{u}_{4,2,Q_R}\leq
a/\varGamma}\leq \exp\bigp{-\varGamma^{\delta(0)}/R^2}.
$$

We write $\theta=r/R\in(0,1)$ and consider the case $\theta\leq1/2$ first. We note that $Q_r\subset Q_{R/2}$ in this scenario. The last inequality implies
\begin{equation}\label{scaling theta small}
\mp\bigp{\norm{u}_{\infty,Q_r}>a,\;2^{(n+1)/2}(R-r)^{-(n+1)/2}\norm{u}_{4,2,Q_R}\leq
a/\varGamma}\leq \exp\bigp{-\varGamma^{\delta(0)}/R^2}.
\end{equation}
This leads to \eqref{Ineq_infinity/4 2} for $r\leq\frac{1}{2}R$ with some $\delta'(0)<\delta(0)$ and $\Gamma'(0)>\Gamma(0)$.

For the case $\theta>1/2$, we have $Q_{(1-\theta)R}(t_0,x_0)\subset Q_R$
for every $(t_0,x_0)\in Q_{\theta R}$, it follows from
{\sc Proposition \ref{Local Max 4 2}} applied
to $u(t-1+t_0,x-x_0)$,
\begin{align*}
&\mp\bigp{\norm{u}_{\infty,Q_{(1-\theta)R/2}(t_0,x_0)}>a,\;((1-\theta)R/2)^{-(n+1)/2}\norm{u}_{4,2,Q_{(1-\theta)R}(t_0,x_0)}\leq a/\varGamma}\\
&\leq\exp\bigp{-\varGamma^{\delta(0)}/((1-\theta)R)^2}.
\end{align*}
Since $Q_{(1-\theta)R}(t_0,x_0)\subset Q_R$, this inequality implies
$$\mp\bigp{\norm{u}_{\infty,Q_{(1-\theta)R/2}(t_0,x_0)}>a,\;((1-\theta)R/2)^{-(n+1)/2}\norm{u}_{4,2,Q_R}\leq a/\varGamma}\leq
\exp\bigp{-\varGamma^{\delta(0)}/((1-\theta)R)^2}.$$
Now there exists a constant $L$ depending only on the dimension $n$ such that we can choose
$\lceil L(1-\theta)^{-(n+2)}\rceil$ points $(t_i,x_i)$ in $Q_{\theta R}$ satisfying
$\displaystyle Q_{\theta R}\subset\bigcup_{i=1}^{\lceil L(1-\theta)^{-(n+2)}\rceil}Q_{(1-\theta)R/2}(t_i,x_i).$
This implies that the event $\bigp{\norm{u}_{\infty,Q_{\theta
R}}>a}$ is contained in $\displaystyle\bigcup_{i=1}^{\lceil L(1-\theta)^{-(n+2)}\rceil}\bigp{\norm{u}_{\infty,
Q_{(1-\theta)R/2}(t_i,x_i)}>a}.$ Therefore we obtain,

\begin{align*}
\mp&\bigp{\norm{u}_{\infty,Q_{\theta
R}}>a,\;((1-\theta)R)^{-(n+1)/2}\norm{u}_{4,2,Q_R}\leq
a/\varGamma}\\
&\leq\sum_{i=1}^{\lceil L(1-\theta)^{-(n+2)}\rceil}\mp\bigp{\norm{u}_{\infty,Q_{(1-\theta)R/2}(t_i,x_i)}>a,\;((1-\theta)R)^{-(n+1)/2}\norm{u}_{4,2,Q_R}\leq
a/\varGamma}\\
&\leq
2L(1-\theta)^{-(n+2)}\exp\bigp{-{2^{-(n+1)/2}\varGamma}^{\delta(0)}/((1-\theta)R)^2}\\
&=\exp\bigp{-\frac{\varGamma^{\delta(0)}}{R^2}\frac{2^{-(n+1)/2}}{(1-\theta)^2}+(n+2)\log\frac{1}{1-\theta}+\log(2L)}.
\end{align*}
Combining the last inequality, \eqref{scaling theta small}, and the
fact that
$$-\frac{\varGamma^{\delta(0)}}{R^2}\frac{2^{-(n+1)/2}}{(1-\theta)^2}+(n+2)\log\frac{1}{1-\theta}+\log(2L)<-2^{-(n+1)/2}\frac{\varGamma^{\delta(0)}}{R^2}, \forall \theta\in(\frac{1}{2},1),R\in(0,1]$$
when $\varGamma$ is large, we obtain {\sc Proposition \ref{Local Max 4 2
scaled}} by shrinking $\delta'(0)$ and enlarging $\varGamma'(0)$.
\end{proof}

\begin{figure}[!htbp]
\setlength{\unitlength}{0.09in}
\centering
\begin{picture}(40,25)
\put(0,3){\framebox(36,18){$Q_R$}}
\put(6,13){\framebox(24,8){$Q_r$}}
\put(6,20.5){\dashbox{0.125}(6,.5){}}
\put(12,20.5){\dashbox{0.125}(6,.5){}}
\put(18,20.5){\dashbox{0.125}(6,.5){}}
\put(24,20.5){\dashbox{0.125}(6,.5){}}
\put(6,20){\dashbox{0.125}(6,.5){}}
\put(12,20){\dashbox{0.125}(6,.5){}}
\put(18,20){\dashbox{0.125}(6,.5){}}
\put(24,20){\dashbox{0.125}(6,.5){}}
\put(6,19.5){\dashbox{0.125}(6,.5){}}
\put(12,19.5){\dashbox{0.125}(6,.5){}}
\put(18,19.5){\dashbox{0.125}(6,.5){}}
\put(24,19.5){\dashbox{0.125}(6,.5){}}
\put(6,19){\dashbox{0.125}(6,.5){}}
\put(12,19){\dashbox{0.125}(6,.5){}}
\put(18,19){\dashbox{0.125}(6,.5){}}
\put(24,19){\dashbox{0.125}(6,.5){}}
\put(6,18.5){\dashbox{0.125}(6,.5){}}
\put(12,18.5){\dashbox{0.125}(6,.5){}}
\put(18,18.5){\dashbox{0.125}(6,.5){}}
\put(24,18.5){\dashbox{0.125}(6,.5){}}
\put(6,18){\dashbox{0.125}(6,.5){}}
\put(12,18){\dashbox{0.125}(6,.5){}}
\put(18,18){\dashbox{0.125}(6,.5){}}
\put(24,18){\dashbox{0.125}(6,.5){}}
\put(6,17.5){\dashbox{0.125}(6,.5){}}
\put(12,17.5){\dashbox{0.125}(6,.5){}}
\put(18,17.5){\dashbox{0.125}(6,.5){}}
\put(24,17.5){\dashbox{0.125}(6,.5){}}
\put(6,17){\dashbox{0.125}(6,.5){}}
\put(12,17){\dashbox{0.125}(6,.5){}}
\put(18,17){\dashbox{0.125}(6,.5){}}
\put(24,17){\dashbox{0.125}(6,.5){}}
\put(6,16.5){\dashbox{0.125}(6,.5){}}
\put(12,16.5){\dashbox{0.125}(6,.5){}}
\put(18,16.5){\dashbox{0.125}(6,.5){}}
\put(24,16.5){\dashbox{0.125}(6,.5){}}
\put(6,16){\dashbox{0.125}(6,.5){}}
\put(12,16){\dashbox{0.125}(6,.5){}}
\put(18,16){\dashbox{0.125}(6,.5){}}
\put(24,16){\dashbox{0.125}(6,.5){}}
\put(6,15.5){\dashbox{0.125}(6,.5){}}
\put(12,15.5){\dashbox{0.125}(6,.5){}}
\put(18,15.5){\dashbox{0.125}(6,.5){}}
\put(24,15.5){\dashbox{0.125}(6,.5){}}
\put(6,15){\dashbox{0.125}(6,.5){}}
\put(12,15){\dashbox{0.125}(6,.5){}}
\put(18,15){\dashbox{0.125}(6,.5){}}
\put(24,15){\dashbox{0.125}(6,.5){}}
\put(6,14.5){\dashbox{0.125}(6,.5){}}
\put(12,14.5){\dashbox{0.125}(6,.5){}}
\put(18,14.5){\dashbox{0.125}(6,.5){}}
\put(24,14.5){\dashbox{0.125}(6,.5){}}
\put(6,14){\dashbox{0.125}(6,.5){}}
\put(12,14){\dashbox{0.125}(6,.5){}}
\put(18,14){\dashbox{0.125}(6,.5){}}
\put(24,14){\dashbox{0.125}(6,.5){}}
\put(6,13.5){\dashbox{0.125}(6,.5){}}
\put(12,13.5){\dashbox{0.125}(6,.5){}}
\put(18,13.5){\dashbox{0.125}(6,.5){}}
\put(24,13.5){\dashbox{0.125}(6,.5){}}
\put(6,13){\dashbox{0.125}(6,.5){}}
\put(12,13){\dashbox{0.125}(6,.5){}}
\put(18,13){\dashbox{0.125}(6,.5){}}
\put(24,13){\dashbox{0.125}(6,.5){}}
\end{picture}
\caption{An example of the covering used when $\theta=\frac{2}{3}$}
\label{Fig:6}
\end{figure}
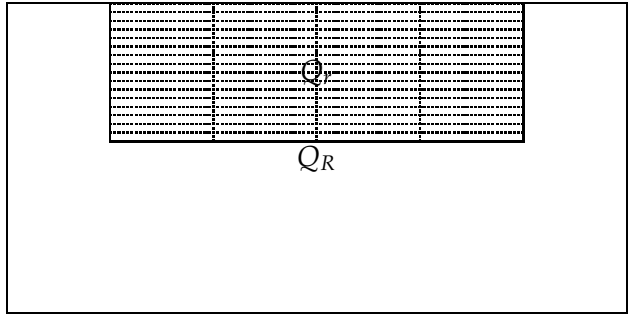

We now introduce the following two lemmas which will be proved later in {\sc Section 5}.
\begin{lem}[Exponent Reduction: Time]
\label{Exponent Reduction Time}
Let $u$ be a random function in $Q_1$. Suppose that for some $\alpha$ and $q$, $u\in L^\alpha([0,1],L^q(B_1))$ almost surely. Suppose further that there exist two constants $\delta(\alpha)>0$ and $
\varGamma(\alpha)$ such that for all $0<r<R\leq 1$, $a>
0$ and $\varGamma>\varGamma(\alpha)$,
\begin{equation}\label{Ineq_infinity/alpha q}
\mp\bigp{\norm{u}_{\infty,Q_r}>a,\;(R-r)^{-(n/q+2/\alpha)}\norm{u}_{\alpha,q,Q_R}\leq
a/\varGamma}\leq \exp\bigp{-\varGamma^{\delta(\alpha)}/R^2}.
\end{equation}
Then for any $\beta\in(\alpha/2,\alpha)$, there exist $
\delta(\beta)>0$ and $\varGamma(\beta)$ depending only on $n,\delta(\alpha)$ and $\varGamma(\alpha)$ such that
\begin{equation}\label{Ineq_infinity/beta q}
\mp\bigp{\norm{u}_{\infty,Q_r}>a,\; r^{-(n/q+2/\beta)}\norm{u}_{\beta,q,Q_{2r}}\leq
a/\varGamma}\leq \exp\bigp{-\varGamma^{\delta(\beta)}/4r^2},
\end{equation}
for all $\varGamma>\varGamma(\beta)$, $a> 0$ and $0<r\leq1/2$.
\end{lem}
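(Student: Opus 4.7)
The plan is to combine the hypothesis \eqref{Ineq_infinity/alpha q} with the interpolation inequality
\[
\norm{u}_{\alpha,q,Q_\sigma}\le C\,\sigma^{n(\alpha-\beta)/(q\alpha)}\,\norm{u}_{\infty,Q_\sigma}^{(\alpha-\beta)/\alpha}\,\norm{u}_{\beta,q,Q_\sigma}^{\beta/\alpha},
\]
obtained from $\int g^\alpha\le\|g\|_\infty^{\alpha-\beta}\int g^\beta$ applied to $g(t):=\norm{u(t)}_{q,B_\sigma}$ together with $\|g\|_\infty\le|B_\sigma|^{1/q}\norm{u}_{\infty,Q_\sigma}$, and to iterate over a nested chain of radii between $r$ and $2r$. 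Write $\theta:=(\alpha-\beta)/\alpha$ and $\gamma_1:=n/q+2/\alpha$; the assumption $\beta\in(\alpha/2,\alpha)$ gives $\theta<1/2$ and hence $1/(1-\theta)=\alpha/\beta<2$, which is what keeps the constants in the iteration under control.

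Set $\rho_k:=r(2-2^{-k})$, so that $\rho_0=r$, $\rho_k\uparrow 2r$, $\rho_k<1$ for every finite $k$, and $\rho_{k+1}-\rho_k=r\cdot 2^{-k-1}$, and pick $\varGamma_k:=\varGamma\cdot 2^{k\mu}$ for a small fixed $\mu>0$. For each $k\ge 0$, apply \eqref{Ineq_infinity/alpha q} at $(r',R)=(\rho_k,\rho_{k+1})$ with parameter $\varGamma_k$ and a deterministic constant $A_k$ to be defined below: writing $\Phi(\sigma):=\norm{u}_{\infty,Q_\sigma}$, the bad event $B_k=\{\Phi(\rho_k)>A_k,\,(r\cdot 2^{-k-1})^{-\gamma_1}\norm{u}_{\alpha,q,Q_{\rho_{k+1}}}\le A_k/\varGamma_k\}$ has probability at most $\exp(-\varGamma_k^{\delta(\alpha)}/(4r^2))$, and since $\varGamma_k^{\delta(\alpha)}=\varGamma^{\delta(\alpha)}\cdot 2^{k\mu\delta(\alpha)}$ grows super-exponentially in $k$, the union bound yields $\mp(\bigcup_k B_k)\le 2\exp(-\varGamma^{\delta(\alpha)}/(4r^2))$ once $\varGamma$ is large.

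Work on the good event $G:=\bigcap_k B_k^c$ intersected with $H:=\{r^{-(n/q+2/\beta)}\norm{u}_{\beta,q,Q_{2r}}\le a/\varGamma\}$, and define $A_k$ by the backward recursion $A_k:=\widetilde C\,\varGamma_k\,2^{(k+1)\gamma_1}(a/\varGamma)^{1-\theta}A_{k+1}^{\theta}$, starting at a finite level $K$ from $A_K:=\Phi(\rho_K)$; this starting value is a.s.\ finite, because $\rho_K<1$ and one application of \eqref{Ineq_infinity/alpha q} at $(\rho_K,1)$ to the a.s.\ finite $\norm{u}_{\alpha,q,Q_1}$ gives $\Phi(\rho_K)<\infty$ a.s. The interpolation combined with the exact cancellation
\[
-\gamma_1+\frac{n(\alpha-\beta)}{q\alpha}+\Big(\frac{n}{q}+\frac{2}{\beta}\Big)(1-\theta)=0
\]
of the $r$-exponents shows that on $H\cap\{\Phi(\rho_{k+1})\le A_{k+1}\}$ the ``data'' $(r\cdot 2^{-k-1})^{-\gamma_1}\norm{u}_{\alpha,q,Q_{\rho_{k+1}}}$ is bounded by exactly $A_k/\varGamma_k$, so $\omega\in B_k^c$ forces $\Phi(\rho_k)\le A_k$, closing the downward induction. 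Unrolling $\log A_k=c_k+\theta\log A_{k+1}$ with $c_k$ linear in $k$, and using $\sum_k\theta^k=1/(1-\theta)$, $\sum_k k\theta^k=\theta/(1-\theta)^2$ (both finite because $\theta<1/2$), together with $\theta^K\log A_K\to 0$ a.s.\ as $K\to\infty$, yields $\Phi(r)\le \widetilde C\,\varGamma^{\alpha/\beta-1}a$ on $G\cap H$. Re-labeling $\tilde a:=\widetilde C\varGamma^{\alpha/\beta-1}a$ and $\tilde\varGamma:=\widetilde C\varGamma^{\alpha/\beta}$, so that $a/\varGamma=\tilde a/\tilde\varGamma$, and absorbing $\widetilde C$ into the exponent for $\tilde\varGamma$ large converts the tail bound into $\exp(-\tilde\varGamma^{\delta(\beta)}/(4r^2))$ with $\delta(\beta):=\beta\delta(\alpha)/(2\alpha)$, which is precisely \eqref{Ineq_infinity/beta q}.

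The main obstacle is converting the probabilistic hypothesis into a pathwise iteration: the starting level $\Phi(\rho_K)$ of the backward induction is random but only a.s.\ finite, so the recursion is first run up to a finite $K$ and then $K\to\infty$ is taken pointwise on $G\cap H$, with the geometric decay $\theta^K\to 0$ killing the dependence on $A_K$ in the limit. A secondary technical matter is tuning $\mu>0$ small enough that the factor $2^{k\mu}$ in $\varGamma_k$ only enters the constant $\widetilde C$ (rather than the scaling $\varGamma^{\alpha/\beta-1}$), yet positive enough that the per-step failure probabilities are summable in $k$.
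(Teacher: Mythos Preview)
Your overall strategy---interpolate $\norm{u}_{\alpha,q}\le C\,\Phi^{\theta}\norm{u}_{\beta,q}^{1-\theta}$ and iterate over a nested chain of radii---is sound, and the scaling bookkeeping (the cancellation of the $r$-exponents, the final exponent $\delta(\beta)=\beta\delta(\alpha)/(2\alpha)$) is correct. But there is a genuine circularity that must be repaired. You declare the thresholds $A_k$ to be \emph{deterministic}, and indeed they must be: the hypothesis \eqref{Ineq_infinity/alpha q} gives the bound $\mp\{B_k\}\le\exp\{-\varGamma_k^{\delta(\alpha)}/(4r^2)\}$ only at a deterministic level $a$. Yet you then seed the backward recursion with the \emph{random} value $A_K:=\Phi(\rho_K)$. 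As written, the events $B_k$ (and hence $G=\bigcap_kB_k^c$) are not well defined as measurable sets, and the tail bound on each $B_k$ is unjustified.

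The repair is available and actually streamlines your argument. Since the recursion reads $\log A_k=c_k+\theta\log A_{k+1}$ with $c_k$ linear in $k$ and $\theta<1$, set once and for all the \emph{deterministic} thresholds
\[
A_k:=\exp\Bigl(\sum_{j\ge k}\theta^{\,j-k}c_j\Bigr),
\]
which satisfy the same recursion and in addition $A_K\to\infty$ as $K\to\infty$. With these $A_k$ the events $B_k$ are genuine and $\mp(G^c)$ is bounded exactly as you claim. On $G\cap H\cap\{\Phi(2r)<\infty\}$ (the last event has full probability when $r<1/2$, by one application of \eqref{Ineq_infinity/alpha q}), the backward induction starts at a \emph{random but finite} index $K(\omega)$ at which $\Phi(\rho_K)(\omega)\le A_K$, and then delivers $\Phi(r)\le A_0=\widetilde C\,\varGamma^{\alpha/\beta-1}a$ as you computed; the endpoint $r=1/2$ follows by a limiting step. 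For comparison, the paper does not keep the multiplicative form: it passes to the additive Young split $\norm{u}_{\alpha,q}\le\epsilon\,\Phi+C\epsilon^{-\gamma}\norm{u}_{\beta,q}$, builds events $\mathcal S_i$ all pegged to the \emph{same} fixed level $a$, and invokes an auxiliary probabilistic lemma (Lemma~\ref{Prob lemma}) precisely to absorb the accumulating random additive term. Your repaired multiplicative iteration sidesteps that lemma altogether, which is a genuine simplification.
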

\begin{lem}[Exponent Reduction: Space]
\label{Exponent Reduction Space} Let $u$ be a random function in $Q_1$. Suppose that for some $p$ and $\alpha$, $u\in L^p([0,1],L^\alpha(B_1))$ almost surely. Suppose further that there exist two constants $\delta(\alpha)>0$ and $
\varGamma(\alpha)$ such that for all $0<r<R\leq 1$, $a>
0$ and $\varGamma>\varGamma(\alpha)$,
\begin{equation}\label{Ineq_infinity/p alpha}
\mp\bigp{\norm{u}_{\infty,Q_r}>a,\;(R-r)^{-(n/\alpha+2/p)}\norm{u}_{p,\alpha,Q_R}\leq
a/\varGamma}\leq \exp\bigp{-\varGamma^{\delta(\alpha)}/R^2}.
\end{equation}
Then for any $\beta\in(\alpha/2,\alpha)$, there exist $
\delta(\beta)>0$ and $\varGamma(\beta)$ depending only on $n,\delta(\alpha)$ and $\varGamma(\alpha)$ such that
\begin{equation}\label{Ineq_infinity/p beta}
\mp\bigp{\norm{u}_{\infty,Q_r}>a,\; r^{-(n/\beta+2/p)}\norm{u}_{p,\beta,Q_{2r}}\leq
a/\varGamma}\leq \exp\bigp{-\varGamma^{\delta(\beta)}/4r^2},
\end{equation}
for all $\varGamma>\varGamma(\beta)$, $a> 0$ and $0<r\leq1/2$.
\end{lem}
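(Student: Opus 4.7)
The strategy is an exponent-reduction iteration parallel to the Time version. Set $\theta := \beta/\alpha \in (1/2,1)$, $\rho := n/\alpha + 2/p$, and $\rho' := n/\beta + 2/p$. The key tool is the interpolation
\[
\norm{u}_{p,\alpha, Q_R} \leq C R^{2(1-\theta)/p}\,\norm{u}_{\infty,Q_R}^{1-\theta}\,\norm{u}_{p,\beta,Q_R}^{\theta},
\]
which comes from $\norm{v}_{L^\alpha(B)}\leq\norm{v}_{L^\infty(B)}^{1-\theta}\norm{v}_{L^\beta(B)}^{\theta}$ pointwise in $t$ together with H\"older in $t$ (conjugate exponents $1/(1-\theta)$ and $1/\theta$). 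A direct calculation gives the algebraic identity $2(1-\theta)/p + \theta\rho' = \rho$, which will cause all powers of $r$ to cancel once this bound is fed into the hypothesis.

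I plan to run a Moser-type bootstrap along the radii $r_k := r(1+2^{-k})$, $k=0,1,\ldots$, with $r_0 = 2r$ and $r_k \searrow r$. Write $\phi(s) := \norm{u}_{\infty,Q_s}$ and $M := \norm{u}_{p,\beta,Q_{2r}}$. Working on the event $E := \{M \leq ar^{\rho'}/\varGamma\}$, under the inductive bound $\phi(r_k) \leq b_k$ the interpolation produces
\[
(r_k - r_{k+1})^{-\rho}\,\norm{u}_{p,\alpha,Q_{r_k}} \leq C\, 2^{(k+1)\rho}\,\varGamma^{-\theta}\, a^{\theta}\, b_k^{1-\theta},
\]
with no residual powers of $r$ (this is where the identity above is used). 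Choosing the hypothesis parameter $\varGamma_k := \varGamma^{\theta/2}\cdot 2^{k}$ (growing so that $\sum_k \exp(-\varGamma_k^{\delta(\alpha)}/r_k^2)$ is geometrically dominated by its first term, and hence by $\exp(-\varGamma^{\theta\delta(\alpha)/2}/4r^2)$) and threshold $a_k$ equal to $\varGamma_k$ times the right-hand side above, the hypothesis applied to the pair $(r_{k+1},r_k)$ yields $\phi(r_{k+1}) \leq a_k =: b_{k+1}$ on a good event $G_k$.

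Setting $y_k := b_k/a$, the scalar recursion reads, in logs, $\log y_{k+1} = (1-\theta)\log y_k - (\theta/2)\log\varGamma + O(k)$. Because $\beta > \alpha/2$ forces the contraction $(1-\theta) < 1/2$, iterating and choosing $K\asymp\log\varGamma$ large enough to absorb the initial term $(1-\theta)^K\log y_0$ but controlled enough that the linear drift $O(K)$ stays dominated by $-(\theta/2)\log\varGamma$, one obtains $y_K \leq 1$ for $\varGamma$ sufficiently large. Since $Q_r\subset Q_{r_K}$, this gives $\phi(r) \leq \phi(r_K) \leq b_K \leq a$ on $E\cap\bigcap_{k<K}G_k$, and therefore
\[
\mp\bigp{\phi(r)>a,\ M\leq ar^{\rho'}/\varGamma} \leq \sum_{k<K}\mp(G_k^c) \leq \exp(-\varGamma^{\delta(\beta)}/4r^2)
\]
for any $\delta(\beta)$ slightly smaller than $\theta\delta(\alpha)/2 = \beta\delta(\alpha)/(2\alpha)$ (to absorb the constant).

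The main obstacle is initializing the induction, since $\phi(r_0) = \phi(2r)$ is only a.s.~finite (by the $L^p(L^\alpha)$ assumption), not bounded deterministically. I would handle this by first applying the hypothesis once with a large deterministic threshold $N$ and $R$ slightly greater than $2r$ to assert $\phi(2r) \leq N$ outside an exponentially small event, and then taking $b_0 = N$ and enlarging $K$ just enough that the residual $(1-\theta)^K\log(N/a)$ is driven below a fixed constant. The assumption $\beta > \alpha/2$ is essential here: the strict contraction $(1-\theta) < 1/2$ is what lets the iteration terminate within $O(\log\varGamma)$ steps before the linear-in-$k$ forcing erodes the bound.
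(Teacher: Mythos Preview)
Your iteration and algebra are set up correctly, and the multiplicative interpolation together with the identity $2(1-\theta)/p+\theta\rho'=\rho$ is exactly the right cancellation. The genuine gap is the initialization step, and the fix you propose does not close it.

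You want to start the outside-in recursion with a deterministic bound $\phi(2r)\le b_0=N$. To obtain $\mp\bigl(\{\phi(2r)>N\}\cap E\bigr)\le\exp(-\varGamma^{\delta(\beta)}/4r^2)$ from the hypothesis, you would need to verify that on $E$ one has $(R'-2r)^{-\rho}\norm{u}_{p,\alpha,Q_{R'}}\le N/\varGamma'$ for some $R'>2r$ and some $\varGamma'$ large. But the event $E$ only controls $\norm{u}_{p,\beta,Q_{2r}}$; it says nothing about the $(p,\alpha)$-norm on a larger cube, and the interpolation you use to pass from $\beta$ to $\alpha$ already requires an $L^\infty$ bound---which is precisely what you are trying to obtain. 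So the step is circular. If instead you try to take $N$ independent of $a$ and $\varGamma$ and simply invoke $\mp\{\phi(2r)>N\}<\epsilon$, the resulting $\varGamma(\beta)$ depends on the law of $u$, contradicting the stated dependence on $n,\delta(\alpha),\varGamma(\alpha)$ only. There is a second, related issue: your recursion gives $\log y_K\approx (1-\theta)^K\log(b_0/a)-\tfrac12\log\varGamma+O(K)$, and to make $(1-\theta)^K\log(b_0/a)$ negligible uniformly in $a>0$ with $b_0$ fixed you would need $K\to\infty$, at which point the $O(K)$ drift blows up.

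The paper avoids this by reversing the direction: it takes $r_0=r$ and lets $r_i\nearrow 2r$, and works with events of the additive form $\mathcal S_i=\{2^{-i}\phi(r_i)+\sum_{j\le i}c_j\norm{u}_{p,\beta,Q_{r_j}}>a\}$, using the Young-inequality form of the interpolation rather than the multiplicative one. The transition $\mp(\mathcal S_i\cap\mathcal S_{i+1}^c)$ is then controlled via an auxiliary probabilistic lemma (roughly: if $\mp\{X>b,\ NY\le b\}\le e^{-gN^\delta}$ and $Y\ge KZ$, then $\mp\{X+Z>b,\ NY+Z\le b\}$ enjoys a comparable bound) which lets one carry the accumulated $\beta$-norm terms through each step. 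The point of going inside-out is that the ``start'' $\mathcal S_0=\{\phi(r)>a\}$ is exactly the bad event, while the ``end'' $\mathcal S_m^c$ contains $E$ once $m\to\infty$, because the coefficient of $\phi(r_m)$ is $2^{-m}\to 0$ and $\phi(2r)<\infty$ almost surely. Thus almost-sure finiteness enters only qualitatively, not as a quantitative input, and no initialization is needed.
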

\bigskip
To finish the proof of {\sc Proposition \ref{Local Max all p q scaled}}, we strengthen the conclusion of the two lemmas above, namely, \eqref{Ineq_infinity/beta q} and \eqref{Ineq_infinity/p beta} to the forms of \eqref{Ineq_infinity/alpha q} and \eqref{Ineq_infinity/p alpha} with the same covering argument as in the proof of {\sc Proposition \ref{Local Max 4 2 scaled}}. From here, {\sc Proposition \ref{Local Max 4 2 scaled}} can be viewed as a starting point and {\sc Proposition \ref{Local Max all p q scaled}} is obtained by repeatedly applying {\sc Lemma \ref{Exponent Reduction Time}} and {\sc Lemma \ref{Exponent Reduction Space}}.

\section{Proof of the Exponent Reduction Lemma}
In this section, we prove {\sc Lemma \ref{Exponent Reduction Time}} and {\sc Lemma \ref{Exponent Reduction Space}}. The two proofs are almost completely identical, they both come from combining an iterative method used by Fanghua Lin in \cite{LinBook} and the Borel-Cantelli type argument used in \cite[Proposition 3.3]{HWW14}; see \cite[Chapter 4]{LinBook} for a detailed exposition in the classical case.

We will only prove {\sc Lemma \ref{Exponent Reduction Time}} in detail and point out the differences for {\sc Lemma \ref{Exponent Reduction Space}}. We start with an auxiliary parameter $\tau\in(0,1)$ which will be determined later. Let $r_0=r$ and $r_{i+1}=r_i+r(1-\tau)\tau^i$ be a sequence of numbers increasing from $r$ to $2r$.

We will repeatedly use \eqref{Ineq_infinity/alpha q} on each pair of sets $(Q_{r_i},\,Q_{r_{i+1}})$ and then sum up the inequalities. However, the original form of \eqref{Ineq_infinity/alpha q} is not fit for estimation after summation since the variable $a$ in the inequality has to be a constant. To circumvent such issue, we introduce the following probabilistic lemma.

\begin{lem}\label{Prob lemma}
Assume $X,Y$ and $Z$ are three non-negative random variables with $Y\geq KZ$ for some $K>0$, suppose that there exist some $N_0$, $g$ and $\delta>0$ such that for all $b>0$ and $N>N_0$,
\begin{equation}\label{Prob lemma assumption}
\mp\bigp{X>b,\;YN\leq b}\leq \exp\bigp{-gN^\delta}.
\end{equation}
Then for all $b>0$ and $N$ such that $KN^2/(KN+1)>N_0$,
\begin{equation}\label{Prob lemma eqn}
\mp\bigp{X+Z>b,\;YN+Z\leq b}\leq \exp\bigp{-g(KN^2/(KN+1))^\delta}.
\end{equation}
\end{lem}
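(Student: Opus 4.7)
The plan is to reduce the event $\{X+Z>b,\;YN+Z\leq b\}$ to an event of the form $\{X>b',\;YN'\leq b'\}$ with carefully chosen $b'$ and $N'$, and then invoke the hypothesis \eqref{Prob lemma assumption}. The crucial point is that the auxiliary random variable $Z$ can be eliminated using the deterministic inequality $Y\geq KZ$: on the event of interest, both $Z$ and $b-Z$ can be pinched between explicit deterministic bounds depending only on $b$, $K$, and $N$.

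First, on the event $\{X+Z>b,\;YN+Z\leq b\}$, the hypothesis $Y\geq KZ$ gives $KZN+Z\leq YN+Z\leq b$, hence $Z\leq b/(KN+1)$ and consequently $b-Z\geq bKN/(KN+1)$. Set
\[
b'\;=\;\frac{bKN}{KN+1},\qquad N'\;=\;\frac{KN^{2}}{KN+1}.
\]
Then $X>b-Z\geq b'$, and from $YN\leq b-Z$ together with the identity $N'/N=b'/(b-Z)\cdot((b-Z)/b)\leq b'/(b-Z)$ (equivalently, by direct computation), one checks $YN'\leq b'$. Therefore the event $\{X+Z>b,\;YN+Z\leq b\}$ is contained in $\{X>b',\;YN'\leq b'\}$.

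Since we are given $N'=KN^{2}/(KN+1)>N_{0}$, the hypothesis applies with $b'$ and $N'$ in place of $b$ and $N$, yielding
\[
\mp\bigp{X+Z>b,\;YN+Z\leq b}\;\leq\;\mp\bigp{X>b',\;YN'\leq b'}\;\leq\;\exp\bigp{-g(N')^{\delta}},
\]
which is exactly the desired inequality \eqref{Prob lemma eqn}.

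The only step requiring any care is verifying $YN'\leq b'$; this is a short algebraic manipulation rather than a genuine obstacle, since both $N'$ and $b'$ were chosen so that the scaling factor from $(b-Z,N)$ to $(b',N')$ matches. In summary, the proof is essentially a rescaling argument that trades the additive perturbation $Z$ for a slight deterioration of the exponent parameter, from $N$ down to $KN^{2}/(KN+1)$.
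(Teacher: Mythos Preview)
Your proof is correct and follows the same route as the paper: you establish the inclusion
\[
\{X+Z>b,\;YN+Z\leq b\}\subseteq\{X>b',\;YN'\leq b'\}
\]
with $b'=bKN/(KN+1)$ and $N'=KN^{2}/(KN+1)$, using $Y\geq KZ$ to bound $Z\leq b/(KN+1)$, and then invoke the hypothesis. The paper's argument is marginally terser in that it first records the intermediate inclusion into $\{X>b',\;YN\leq b\}$ (noting $YN+Z\leq b$ and $Z\geq 0$ give $YN\leq b$ directly) and then observes that $YN\leq b$ implies $YN'\leq b'$ since $N'/N=b'/b$; your chain via $YN\leq b-Z$ and the inequality $N'/N\leq b'/(b-Z)$ reaches the same conclusion.
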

\begin{proof}
We show the following inclusion
$$\bigp{X+Z>b,\; YN+Z\leq b}\subseteq\bigp{X>KNb/(KN+1),\;YN\leq b}.$$
\eqref{Prob lemma eqn} follows immediately from this and \eqref{Prob lemma assumption}.

To prove the inclusion, we note that $Y\geq KZ\geq 0$ and $YN+Z\leq b$ imply $YN\leq b$ and $(KN+1)Z\leq b$. The second inequality and $X+Z>b$ then imply $X>b-Z\geq KNb/(KN+1)$.
\end{proof}

With {\sc Lemma \ref{Prob lemma}} in hand, we can now start the proof of {\sc Lemma \ref{Exponent Reduction Time}}.
\begin{proof}[Proof of {\sc Lemma \ref{Exponent Reduction Time}}]
We will first assume $r<1/2$. We write $\gamma=-1+\alpha/\beta<1$, $\lambda=n/q+2/\alpha$ and denote the volume of $B_1$ by $V$. We introduce an inequality which will play a key role in the proof of the lemma. From the $L^p_tL^q_x$ interpolation inequality and H\"older's inequality,  with some constant $C_{\alpha,\beta}$ we have for all $\epsilon>0$ and $l>0$,
\begin{equation}
\label{Interpolation Time}
\begin{split}
\norm{u}_{\alpha,q,Q_l}&\leq\norm{u}_{\infty,q,Q_l}^{1-\beta/\alpha}\norm{u}_{\beta,q,Q_l}^{\beta/\alpha}\\&\leq\epsilon\norm{u}_{\infty,q,Q_l}+C_{\alpha,\beta}\epsilon^{-\gamma}\norm{u}_{\beta,q,Q_l}\\&\leq\epsilon V^{1/q}l^{n/q}\norm{u}_{\infty,Q_l}+C_{\alpha,\beta}\epsilon^{-\gamma}\norm{u}_{\beta,q,Q_l}.
\end{split}
\end{equation}
For simplicity, we will use the following notations for this proof
$$F(l):=\norm{u}_{\infty,Q_l}, G(l):=\norm{u}_{\beta,q,Q_l},H(l):=\norm{u}_{\alpha,q,Q_l}.$$

Our formal strategy proving the lemma is to apply a Borel-Cantelli type argument. The argument works as following.
\begin{enumerate}
\item We create a sequence of sets $\bigp{\mathcal{S}_i}$ of the form $\bigp{2^{-i}F(r_i)+\sum_{j=1}^{i}c_jG(r_j)>a}$ for a sequence of positive numbers $\{c_i\}$ such that $\sum\limits_{i=1}^\infty c_i<\infty$. Note that $\mathcal{S}_0=\bigp{F(r_0)>a}$
\item We will give an estimate of the probability of the event $\mathcal{S}_0\bigcap\mathcal{S}_m^c$ that is uniform in $m$ by observing
$\mathcal{S}_0\bigcap\mathcal{S}_{m}^c\subset\bigcup_{i=0}^{m-1}\smp{\mathcal{S}_i\bigcap\mathcal{S}_{i+1}^c}$ and estimating each $\mp\bigp{\mathcal{S}_i\bigcap\mathcal{S}_{i+1}^c}$.
\item The estimates for $\mp\bigp{\mathcal{S}_i\bigcap\mathcal{S}_{i+1}^c}$ will be provided by first applying {\sc Lemma \ref{Prob lemma}} and \eqref{Ineq_infinity/alpha q} with carefully chosen parameters and then applying \eqref{Interpolation Time} with appropriate $\epsilon$.
\item After taking $m\to\infty$, the uniform estimate we have obtained in (2) will lead to an estimate for the left hand side of \eqref{Ineq_infinity/beta q}.
\end{enumerate}

We introduce another auxiliary parameter $\theta>0$ which will also be determined later and pick a sequence of numbers $M_i:=M_0/\tau^{i\theta}$ for some large $M_0>\varGamma(\alpha)$. This sequence will be used as a part of parameters for \eqref{Ineq_infinity/alpha q} and {\sc Lemma \ref{Prob lemma}}.

We now choose appropriate values for the sequence $\bigp{c_i}$. Applying \eqref{Ineq_infinity/alpha q} with $\varGamma=M_0$ on the pair of rectangles $(Q_{r_0},\;Q_{r_1})$ and \eqref{Interpolation Time} with $\epsilon=2^{-1}M_0^{-1}V^{-1/q}r_1^{-n/q}(r_1-r_0)^\lambda$, we have
\begin{equation}\label{S_0 cap S_1}
\begin{split}
&\quad\;\mp\bigp{F(r_0)>a,\;2^{-1}F(r_1)+\tilde{C}_{\alpha,\beta}M_0^{1+\gamma} r^{-(1+\gamma)\lambda}r_1^{\gamma n/q}G(r_1)\leq a}\\&\leq\mp\bigp{F(r_0)>a,\;M_0(r_1-r_0)^{-\lambda}H(r_1)\leq a}\\
&\leq\exp\bigp{-M_0^{\delta(\alpha)}/r_1^2}\leq\exp\bigp{-M_0^{\delta(\alpha)}/(4r^2)},
\end{split}
\end{equation}
where $\tilde{C}_{\alpha,\beta}=2^\gamma C_{\alpha,\beta}V^{\gamma/q}(1-\tau)^{-(1+\gamma)\lambda}$.

We now have a natural choice of $c_1=\tilde{C}_{\alpha,\beta}M_0^{1+\gamma} r^{-(1+\gamma)\lambda}r_1^{\gamma n/q}$. To find the appropriate value of other $c_i$'s, we look at the third step listed before. In step (3), we want to apply the interpolation inequality \eqref{Interpolation Time} on $M_i(r_{i+1}-r_i)^{-\lambda}H(r_{i+1})$ to get
$$M_i(r_{i+1}-r_i)^{-\lambda}H(r_{i+1})\leq 2^{-1}F(r_{i+1})+2^ic_iG(r_{i+1}),$$
therefore we choose $c_i=\tilde{C}_{\alpha,\beta}M_0^{1+\gamma} r^{-(1+\gamma)\lambda}r_i^{\gamma n/q}\kappa^{-i+1}$
where $\kappa=2\tau^{(1+\gamma)(\theta+\lambda)}$.

Now we define the sequence of sets
$$\mathcal{S}_i(M_0):=\bigp{2^{-i}F(r_i)+\tilde{C}_{\alpha,\beta}M_0^{1+\gamma}r^{-(1+\gamma)\lambda+\gamma n/q}\sum_{j=1}^{i}(r_j/r)^{\gamma n/q}\kappa^{-j+1}G(r_j)>a},$$
\eqref{S_0 cap S_1} now reads
$$\mp\bigp{\mathcal{S}_0(M_0)\bigcap\mathcal{S}_1^c(M_0)}\leq\exp\bigp{-M_0^{\delta(\alpha)}/(4r^2)}.$$

As we have stated before, we have
$$\mathcal{S}_0(M_0)\bigcap\mathcal{S}_{m}^c(M_0)\subset\bigcup_{i=0}^{m-1}\smp{\mathcal{S}_i(M_0)\bigcap\mathcal{S}_{i+1}^c(M_0)},$$
thus
$$\mp\bigp{\mathcal{S}_0(M_0)\bigcap\mathcal{S}_{m}^c(M_0)}\leq\sum_{i=0}^{m-1}\mp\bigp{\mathcal{S}_i(M_0)\bigcap\mathcal{S}_{i+1}^c(M_0)}.$$
We estimate the probability $\mp\bigp{\mathcal{S}_i(M_0)\bigcap\mathcal{S}_{i+1}^c(M_0)}$ for $i\geq 1$.

We start by choosing $Z=2^i\tilde{C}_{\alpha,\beta}M_0^{1+\gamma}r^{-(1+\gamma)\lambda+\gamma n/q}\sum_{j=1}^{i}(r_j/r)^{\gamma n/q}\kappa^{-j+1}G(r_j)$ with $X=F(r_{i+1})$ and $Y=H(r_{i+1})(r_{i+1}-r_i)^{-\lambda}$ in {\sc Lemma \ref{Prob lemma}}. We then choose the bounding coefficient $K=2^{-i}\kappa^{-1}\tilde{C}_{\alpha,\beta}^{-1}M_0^{-(1+\gamma)}r^{(1+\gamma)\lambda-\gamma n/q}[\sum_{j=1}^{i}(r_j/r)^{\gamma n/q}\kappa^{-j}]^{-1}r_{i+1}^{-2/\beta+2/\alpha}(r_{i+1}-r_i)^{-\lambda}$, as H\"older's inequality implies the estimate $H(r_{i+1})\geq G(r_{i+1})r_{i+1}^{-2/\beta+2/\alpha}$. At this moment, all the conditions of {\sc Lemma \ref{Prob lemma}} are satisfied with the help of \eqref{Ineq_infinity/alpha q} if we set $\delta=\delta(\alpha)$, $N_0=\varGamma(\alpha)$ and $g=1/(4r)^2$.
We want to use \eqref{Prob lemma eqn} with $b=2^ia$ and $N=M_i$. To do so, we show that $KN^2/(KN+1)>N_0$ when $M_0$ is large. Writing $\tilde{C}=2^{-2/\beta+2/\alpha}\tilde{C}_{\alpha,\beta}^{-1}(1-\tau)^{-\lambda}$ and recalling the definition of $\gamma$ and $\lambda$, we have
\begin{align*}
KN&=M_0\tau^{-i\theta}2^{-i}\kappa^{-1}\tilde{C}_{\alpha,\beta}^{-1}M_0^{-(1+\gamma)}r^{(1+\gamma)\lambda-\gamma n/q}[\sum_{j=1}^{i}(r_j/r)^{\gamma n/q}\kappa^{-j}]^{-1}r_{i+1}^{-2/\beta+2/\alpha}\tau^{-i\lambda}(1-\tau)^{-\lambda}r^{-\lambda}\\
&=M_0^{-\gamma}(2\tau^{\theta+\lambda})^{-i}\kappa^{-1}\tilde{C}[\sum_{j=1}^{i}(r_j/r)^{\gamma n/q}\kappa^{-j}]^{-1}(2r/r_{i+1})^{2/\beta-2/\alpha}.
\end{align*}
From here we first pick $\theta>\frac{2\gamma}{1-\gamma}\lambda$ so that $\frac{3+\gamma}{2}\theta+\lambda>(\theta+\lambda)(\gamma+1)$, then pick $\tau<1$ such that $2\tau^{\frac{3+\gamma}{2}\theta+\lambda}=1$. The choices of $\theta$ and $\tau$ also guarantee $\kappa>1$. Writing $C=2^{-\gamma n/q}(1-\kappa^{-1})\tilde{C}$, we estimate $KN$ as following,
$$KN\geq M^{-\gamma}(2\tau^{\theta+\lambda})^{-i}C(\sum_{j=0}^{i-1}\kappa^{-j})^{-1}(1-\kappa^{-1})^{-1}\geq M^{-\gamma}(2\tau^{\theta+\lambda})^{-i}C.$$
Therefore we have, when $M_0$ is large,
\begin{align*}
KN^2/(KN+1)&=N(KN)/(KN+1)\geq M_0\tau^{-i\theta}C[C+(2\tau^{\theta+\lambda})^iM_0^{\gamma}]^{-1}\\
&=M_0^{1-\gamma}\tau^{-i(1-\gamma)\theta/2}[M_0^{-\gamma}\tau^{i(1+\gamma)\theta/2}+C^{-1}(2\tau^{(3+\gamma)\theta/2+\gamma})^i]^{-1}\\
&\geq M_0^{1-\gamma}\tau^{-i(1-\gamma)\theta/2}(1+C^{-1})^{-1}\\&>N_0.
\end{align*}

From here, by \eqref{Prob lemma eqn} we obtain,
\begin{equation}\label{Exponent Reduction Time/BC Ineq}
\begin{split}
&\mp\bigg\{F(r_i)+2^i\tilde{C}_{\alpha,\beta}M_0^{1+\gamma}r^{-(1+\gamma)\lambda+\gamma n/q}\sum_{j=1}^{i}(r_j/r)^{\gamma n/q}\kappa^{-j+1}G(r_j)>2^ia,\\
&M_0\tau^{-i\theta}(r_i+1-r_i)^{-\lambda}H(r_{i+1})+2^i\tilde{C}_{\alpha,\beta}M_0^{1+\gamma}r^{-(1+\gamma)\lambda+\gamma n/q}\sum_{j=1}^{i}(r_j/r)^{\gamma n/q}\kappa^{-j+1}G(r_j)\leq2^ia\bigg\}\\
&\leq\exp\bigp{-(KN^2/(KN+1))^{\delta(\alpha)}/(4r^2)}\\
&\leq\exp\bigp{-(M_0^{1-\gamma}\tau^{-i(1-\gamma)\theta/2}(1+C^{-1})^{-1})^{\delta(\alpha)}/(4r^2)}\\
&\leq\exp\bigp{-(M_0\tau^{-i\theta})^{\delta(\alpha)(1-\gamma)/2}/(4r^2)}.
\end{split}
\end{equation}
It is worth noting that the estimates so far are uniform for all $i$ and $r\in(0,1/2)$ when $M_0$ is large.

We now use \eqref{Interpolation Time} with $\epsilon=2^{-1}M_0^{-1}V^{-1/q}r_{i+1}^{-n/q}(r_{i+1}-r_i)^\lambda$ and $l=r_{i+1}$. The last estimate gives,
$$\mp\bigp{\mathcal{S}_i(M_0)\bigcap\mathcal{S}_{i+1}^c(M_0)}\leq\exp\bigp{-(M_0\tau^{-i\theta})^{\delta(\alpha)(1-\gamma)/2}/(4r^2)},$$
which further leads to
$$
\mp\bigp{\mathcal{S}_0(M_0)\bigcap\mathcal{S}_{m}^c(M_0)}\leq\exp\bigp{-M_0^{\delta(\alpha)}/(4r^2)}+\sum_{i=1}^{m-1}\exp\bigp{-(M_0\tau^{-i\theta})^{\delta(\alpha)(1-\gamma)/2}/(4r^2)}.
$$

Recalling that $\gamma=-1+\alpha/\beta$ and $\lambda=2/\alpha+n/q$, we have the identity $(1+\gamma)\lambda-\gamma n/q=2/\beta+n/q$. This means the last inequality we have actually says,
\begin{align*}
\mp\bigg\{F(r)>a,&\;2^{-m}F(r_m)+\tilde{C}_{\alpha,\beta}M_0^{1+\gamma}r^{-(n/q+2/\beta)}\sum_{j=0}^{m-1}(r_j/r)^{\gamma n/q}\kappa^{-j}G(r_{j+1})\leq a\bigg\}\\
&\leq \exp\bigp{-M_0^{\delta(\alpha)}/(4r^2)}+\sum_{i=1}^{m-1}\exp\bigp{-(M_0\tau^{-i\theta})^{\delta(\alpha)(1-\gamma)/2}/(4r^2)},
\end{align*}
which implies
\begin{equation}\label{Exponent Reduction Time/One step to last}
\begin{split}
\mp\bigg\{F(r)>a,&\;2^{-m}F(2r)+2^{\gamma n/q}\tilde{C}_{\alpha,\beta}(1-\kappa)M_0^{1+\gamma}r^{-(n/q+2/\beta)}G(2r)< a\bigg\}\\
&\leq \exp\bigp{-M_0^{\delta(\alpha)}/(4r^2)}+\sum_{i=1}^{m-1}\exp\bigp{-(M_0\tau^{-i\theta})^{\delta(\alpha)(1-\gamma)/2}/(4r^2)}\\
&\leq \exp\bigp{-M_0^{\delta'}/(4r^2)}
\end{split}
\end{equation}
when $M_0$ is sufficiently large for $\delta'=(1-\gamma)\delta(\alpha)/4$.

Since $u\in L^\alpha([0,1],L^q(B_1))$ almost surely and $2r<1$, by choosing sufficiently large $a$ and $\varGamma$ in \eqref{Ineq_infinity/alpha q}, we obtain
$$\mp\bigp{\norm{u}_{\infty,Q_{2r}}<\infty}=1.$$
Taking $m\to\infty$, Fatou's Lemma and \eqref{Exponent Reduction Time/One step to last} gives,
\begin{equation*}
\mp\bigp{F(r)>a,\;2^{\gamma n/q}\tilde{C}_{\alpha,\beta}(1-\kappa)M_0^{1+\gamma}r^{-(n/q+2/\beta)}G(2r)< a}\leq \exp\bigp{-M_0^{\delta'}/(4r^2)}.
\end{equation*}
This implies
\begin{equation}\label{step before last reduction time}
\mp\bigp{F(r)>a,\;2^{1+\gamma n/q}\tilde{C}_{\alpha,\beta}(1-\kappa)M_0^{1+\gamma}r^{-(n/q+2/\beta)}G(2r)\leq a}\leq \exp\bigp{-M_0^{\delta'}/(4r^2)}.
\end{equation}
The inequality \eqref{step before last reduction time} implies \eqref{Ineq_infinity/beta q} for $\delta(\beta)=\delta'/(2(1+\gamma))$ and sufficiently large $\Gamma(\beta)$ for the case $0<r<1/2$.

It remains to deal with the case $r=1/2$. \eqref{step before last reduction time} with $r=1/2-\epsilon$ implies
\begin{equation*}\label{last step for reduction time}
\mp\bigp{F(1/2-\epsilon)>a,\;2^{1+\gamma n/q}\tilde{C}_{\alpha,\beta}(1-\kappa)M_0^{1+\gamma}(1/2-\epsilon)^{-(n/q+2/\beta)}G(1)< a}\leq \exp\bigp{-M_0^{\delta'}}.
\end{equation*}

Letting $\epsilon=1/k$ and $k\to\infty$, we get from Fatou's lemma,
$$\mp\bigp{F(1/2)>a,\;2^{1+\gamma n/q}\tilde{C}_{\alpha,\beta}(1-\kappa)M_0^{1+\gamma}(1/2)^{-(n/q+2/\beta)}G(1)< a}\leq \exp\bigp{-M_0^{\delta'}}.$$
The last inequality implies
$$\mp\bigp{F(1/2)>a,\;2^{2+\gamma n/q}\tilde{C}_{\alpha,\beta}(1-\kappa)M_0^{1+\gamma}(1/2)^{-(n/q+2/\beta)}G(1)\leq a}\leq \exp\bigp{-M_0^{\delta'}}.$$

This finishes the proof for {\sc Lemma \ref{Exponent Reduction Time}}.
\end{proof}

For {\sc Lemma \ref{Exponent Reduction Space}}, we need to use a different interpolation inequality
\begin{equation}
\label{Interpolation Space}
\begin{split}
\norm{u}_{p,\alpha,Q_l}&\leq\norm{u}_{p,\infty,Q_l}^{1-\beta/\alpha}\norm{u}_{p,\beta,Q_l}^{\beta/\alpha}\\&\leq\epsilon\norm{u}_{p,\infty,Q_l}+C_{\alpha,\beta}\epsilon^{-\gamma}\norm{u}_{p,\beta,Q_l}\leq\epsilon l^{2/p}\norm{u}_{\infty,Q_l}+C_{\alpha,\beta}\epsilon^{-\gamma}\norm{u}_{p,\beta,Q_l}.
\end{split}
\end{equation}

The Borel-Cantalli argument will be applied to the following sequence of sets
$$\bar{\mathcal{S}}_i(M_0):=\bigp{2^{-i}\norm{u}_{\infty,Q_{r_i}}+\bar{C}_{\alpha,\beta}M_0^{1+\gamma}r^{-(1+\gamma)\bar{\lambda}+2\gamma /p}\sum_{j=0}^{i-1}(r_j/r)^{2\gamma/p}\kappa^{-j}\norm{u}_{p,\beta,Q_{r_{j+1}}}>a}$$
with $\bar{C}_{\alpha,\beta}=2^\gamma C_{\alpha,\beta}(1-\tau)^{-(1+\gamma)\bar{\lambda}}$ and $\bar{\lambda}=2/p+n/\alpha$.

The rest of the proof is identical to the one for {\sc Lemma \ref{Exponent Reduction Time}}.

\section{Reverse Cauchy-Schwarz type inequality}
In this section, we will prove {\sc Proposition \ref{S JN}}, the reverse Cauchy-Schwartz type inequality. Recalling the following definition for all positive function $v>0$ and space-time regions $D_1$ and $D_2$,
$$\mathcal{F}[v, \alpha]_{D_1,D_2} := \smp{ \int_{D_1} v^{-\alpha}\;dxdt } \smp{ \int_{D_2}  v^{\alpha} \;dxdt },$$
we restate our goal here.
\begin{prop}
\label{S JN_copy}
Let $u$ be a non-negative super-solution of \eqref{basiceqn} in $[0,2]\times B_1$. Given $t\in(0,1)$, for every $\epsilon >0$, there exist  constants $\alpha_{\epsilon}$ and $K_{\epsilon}$ depending only on $n, \iota, \Lambda,t$ and  $\epsilon$ such that $\forall \mu>0$
\begin{equation}
\mp \bigp{ \mathcal{F}[u+\mu,\alpha_{\epsilon}]^{1/\alpha_{\epsilon}}_{D^+_0,D^-_0} > K_{\epsilon}  }  < \epsilon.
\end{equation}
Here $D^+_0=(2-t^2,2)\times B_t$ and $D^-_0=(0,t^2)\times B_t$.
\end{prop}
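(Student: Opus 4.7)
The plan is to adapt the classical Moser/Fabes--Garofalo strategy to the stochastic setting by studying $w := \log(u+\mu)$, establishing a stochastic parabolic BMO bound for $w$ with high probability, and then invoking the probabilistic variant of the parabolic John--Nirenberg inequality that Section~6 promises to prove beforehand.

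Set $v_\mu := u+\mu \geq \mu > 0$ and $w := \log v_\mu$. Formally applying It\^o's formula to $\log(\cdot + \mu)$ (with the regularisation/approximation justification that is standard here, compare \cite[Remark~2.3]{HWW14}) and using the identity $\divg(\mA\nabla v_\mu)/v_\mu = \divg(\mA\nabla w) + \mA\nabla w\cdot\nabla w$ together with the supersolution inequality for $u$, one obtains, in the weak sense,
\[
dw \;\geq\; \divg(\mA\nabla w)\,dt + \mA\nabla w\cdot\nabla w\,dt + \frac{f}{v_\mu}\,dt - \frac{|g|_{\ell^2}^2}{2 v_\mu^2}\,dt + \frac{g_i}{v_\mu}\,dw_t^i.
\]
The linear growth hypotheses on $f$ and $g$ bound the drift correction by a constant $C=C(\Lambda)$ and force the diffusion coefficients $g_i/v_\mu$ to have $\ell^2$-norm at most $\Lambda$, uniformly in $\mu>0$ and in $\omega$. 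In particular, $-w$ satisfies a stochastic parabolic inequality with a coercive damping term $-\iota|\nabla w|^2$ of the correct sign, which is the stochastic counterpart of the key observation that $-\log u$ is a subsolution in Moser's third step.

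Next I would test this inequality against $\varphi^2$ for a nonnegative smooth spatial cutoff supported in $B_1$ and equal to one on a slight enlargement of $B_t$, integrate over an arbitrary subinterval $[s_1,s_2]\subset [0,2]$, and absorb the cross term $\nabla\varphi\cdot\mA\nabla w$ via Young's inequality. The resulting energy estimate has the schematic form
\[
\iota\int_{s_1}^{s_2}\!\!\int \varphi^2|\nabla w|^2\,dxd\tau \;\leq\; \langle w(s_1)-w(s_2),\varphi^2\rangle + C(s_2-s_1) + M_{s_1,s_2},
\]
where $M_{s_1,s_2}=\int_{s_1}^{s_2}\langle g_i/v_\mu,\varphi^2\rangle\,dw_\tau^i$ has quadratic variation controlled by $C\Lambda^2(s_2-s_1)$. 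Combined with the parabolic Poincar\'e inequality on cylinders $Q_\rho(\tau_0,x_0)$ contained in a fixed neighbourhood of $D_0^+\cup D_0^-$, this yields a cylinder-by-cylinder control on the mean oscillation of $w$ by a deterministic constant plus an explicit martingale correction. Restricting the supremum defining the parabolic BMO seminorm to a countable family of dyadic cylinders, and applying a Doob/exponential martingale union bound to the associated family $\{M_{s_1,s_2}\}$, produces an event $\Omega_\epsilon$ with $\mp(\Omega_\epsilon^c)<\epsilon$ on which the parabolic BMO seminorm of $w$ on the relevant region is bounded by a deterministic constant $M_\epsilon$ depending only on $n,\iota,\Lambda,t,\epsilon$ and, crucially, not on $\mu$.

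On $\Omega_\epsilon$ I would then invoke the probabilistic variant of the parabolic John--Nirenberg inequality established earlier in Section~6: once $w$ has parabolic BMO norm at most $M_\epsilon$, there exist $\alpha_\epsilon,K_\epsilon>0$, depending only on $n,\iota,\Lambda,t$ and $\epsilon$ (through $M_\epsilon$), such that $\mathcal{F}[v_\mu,\alpha_\epsilon]^{1/\alpha_\epsilon}_{D_0^+,D_0^-}\leq K_\epsilon$; combining this with $\mp(\Omega_\epsilon^c)<\epsilon$ gives the claim. The main obstacle is precisely the passage from cylinder-by-cylinder stochastic energy estimates to a single uniform BMO bound holding on one good event: the BMO seminorm is a supremum over infinitely many cylinders while the stochastic error $M_{s_1,s_2}$ is only controlled one cylinder at a time. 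A dyadic covering of the relevant region, together with a careful union bound using exponential martingale estimates, should handle this; ensuring that the resulting constants $\alpha_\epsilon,K_\epsilon$ are independent of $\mu$ is also essential, since $\mu\downarrow 0$ will be taken later in the proof of {\sc Theorem \ref{Main Theorem}}.
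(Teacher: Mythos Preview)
Your proposal is correct and follows essentially the same route as the paper: take the logarithm $w=\log(u+\mu)$, derive the stochastic differential inequality with the coercive $|\nabla w|^2$ term, test against a cutoff and use a (weighted) Poincar\'e inequality to control oscillations on each cube up to a martingale error with bounded quadratic variation, then apply an exponential-martingale union bound over a countable dyadic family of cubes to obtain a single good event on which the John--Nirenberg hypotheses hold with constants independent of $\mu$. The paper organises the argument slightly differently---it first proves an \emph{almost sure} oscillation bound with the martingale $M^\mu_{C_j(i)}$ explicitly subtracted off ({\sc Lemma~\ref{s local bound}}), and only afterwards controls $\sup_t|M^\mu_{C_j(i)}(t)|$ uniformly over the collection $\mathscr{C}'$ ({\sc Lemma~\ref{S JN Step 3}})---and it works with the square-root variant $\sqrt{(\cdot)^+}$ of the BMO condition (so that the level-set estimate $|\{z>a\}|\le C/a$ coming from the differential inequality suffices), but these are implementation choices rather than a different strategy.
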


To better present our idea, we assume $t=1/2$ for now. Our proof will work for every $t\in(0,1)$ with minor adjustments and we will point out the differences after the proof.

We write $h^\mu=-\log(u+\mu)$ and we will prove a tail estimate for $\int_{D^+} e^{\nu h^\mu}dxdt\int_{D^-}e^{-\nu h^\mu}dxdt$ regardless of $\mu$ when $\nu$ is small. The proof of such estimate relies heavily on a variant of the parabolic John-Nirenberg inequality. To present the variant, we define a few collections of space-time rectangular regions as following.

We first create a large collection of cubes within $[0,2]\times B_1$ starting from $C_0=(0,2)\times B_{1/2}$.
We start by defining $\mathscr{C}_0=\{C_0\}$. For every cube $C$ assuming the form $(l-4s,l+4s)\times B_z(w)$, we write $C^+:=(l,l+4s)\times B_z(w)$, $C^-:=(l-4s,l)\times B_z(w)$, $D^+:=(l+3s,l+4s)\times B_z(w)$, $D^-:=(l-4s,l-3s)\times B_z(w)$, $I^+:=(l+2s,l+4s)\times B_z(w)$ and $I^-:=(l-4s,l-2s)\times B_z(w)$ as in Fig \ref{Fig:7}. In this way, we have $C_0^+,C_0^-,D_0^+,D_0^-,I_0^+$ and $I_0^-$ defined, and we define $\mathscr{C}^+_0,\mathscr{C}^-_0,\mathscr{D}^+_0,\mathscr{D}^-_0,\mathscr{I}^+_0$ and $\mathscr{I}^-_0$ as the collection made up of each of them, respectively.

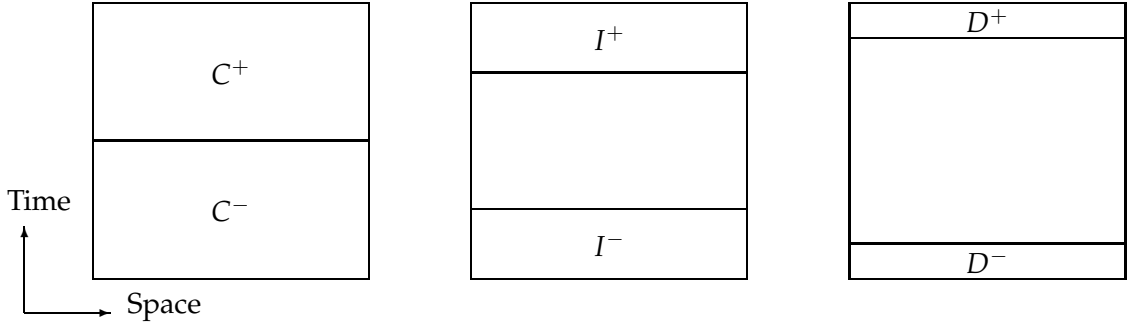
\begin{figure}[!htbp]
\setlength{\unitlength}{0.09in}
\centering
\begin{picture}(60,22)
\put(0,3){\framebox(16,8){$C^-$}}
\put(0,11){\framebox(16,8){$C^+$}}
\put(22,3){\framebox(16,4){$I^-$}}
\put(22,15){\framebox(16,4){$I^+$}}
\put(21.98,7){\line(0,1){8}}
\put(38.02,7){\line(0,1){8}}
\put(44,3){\framebox(16,2){$D^-$}}
\put(44,17){\framebox(16,2){$D^+$}}
\put(43.97,5){\line(0,1){12}}
\put(60.03,5){\line(0,1){12}}
\put(-4,1){\vector(0,1){5}}
\put(-4,1){\vector(1,0){5}}
\put(-5,7){Time}\put(2,1){Space}
\end{picture}
\caption{The whole cube is $C$, $C^\pm$ occupy the upper/lower half, $D^\pm$ occupy the upper/lower one eighth, and $I^\pm$ occupy the upper/lower quarter.}
\label{Fig:7}
\end{figure}

Now assume $\mathscr{C}_i$ has been defined for some $i$, we divide each cube in $\mathscr{D}^+_i$ and $\mathscr{D}^-_i$ into $4^{n+2}$ congruent pieces by dividing the time interval it spans into $16$ equal pieces and space interval in each dimension into $4$ equal pieces. For any one of these smaller cubes, if it comes from dividing some $D^+_i(k)$ in $\mathscr{D}_i^+$, we think of it as the $D^+$ of some $C$ and put that $C$ into $\mathscr{C}_{i+1}$; otherwise we think of it as the $D^-$ of some $C$ and put that $C$ into $\mathscr{C}_{i+1}$. After this is done for all the smaller cubes coming from the division, we define $\mathscr{C}^+_{i+1},\mathscr{C}^-_{i+1},\mathscr{D}^+_{i+1},\mathscr{D}^-_{i+1},\mathscr{I}^+_{i+1}$ and $\mathscr{I}^-_{i+1}$ as the collection of the respective cubes corresponding to the ones in $\mathscr{C}_{i+1}$. We repeat this process and define $\mathscr{C}(C_0)=\bigcup_{j=1}^\infty \mathscr{C}_j$.

We now define a new collection $\mathscr{C}'(C_0)$. The way to define it is almost identical to the process creating $\mathscr{C}(C_0)$. Again we start with $C_0$, but this time we proceed by dividing $I^\pm$ into $4^{n+2}$ congruent pieces instead of $D^\pm$. After constructing $\mathscr{C}'(C_0)$, we note that such construction can actually be done starting with any cube in $\mathscr{C}(C_0)$, and we write $\mathscr{C}':=\bigcup\limits_{C_j(i)\in\mathscr{C}(C_0)}\mathscr{C}'(C_j(i))$. For the convenience of later calculation, we re-arrange the labels in $\mathscr{C}'$ so any $C_m(k)$ from it has spatial radius $2^{-2m-1}$.

Our parabolic John-Nirenberg inequality takes the following form.
\begin{prop}\label{S JN full}
Assume $f$ is an $L^2$ function on $(0,2)\times B_1$. Suppose that we have a constant $A>0$ such that there exists $a_{C_j(i)}$ for every cube $C_j(i)\in\mathscr{C}'$ satisfying the following inequalities on the corresponding cubes $C^\pm_j(i)$,
\begin{equation}\label{S JN equation 1 copy}\frac{1}{|C^+_j(i)|}\int_{C^+_j(i)}\sqrt{(f(t,x)-a_{C_j(i)})^+}dxdt\leq A,\end{equation}
\begin{equation}\label{S JN equation 2 copy}\frac{1}{|C^-_j(i)|}\int_{C^-_j(i)}\sqrt{(a_{C_j(i)}-f(t,x))^+}dxdt\leq A.\end{equation}
Then there exist two positive constants $B$ and $b$ depending only on the dimension $n$ and $A$ such that for every $\alpha>0$,
$$|\bigp{(t,x)\in D_0^+|(f(t,x)-a_{C_0})^+>\alpha}|\leq Be^{-\frac{b\alpha}{A}}|D_0^+|,$$
$$|\bigp{(t,x)\in D_0^-|(a_{C_0}-f(t,x))^+>\alpha}|\leq Be^{-\frac{b\alpha}{A}}|D_0^-|.$$
Furthermore, for any $0<\nu<\frac{b}{A}$,
$$\int_{D_0^+}e^{\nu f}dxdt\int_{D_0^-}e^{-\nu f}dxdt\leq B^2\nu^2\abs{D_0^+}\abs{D_0^-}\smp{\int_0^\infty e^{(\nu-\frac{b}{A})\alpha}d\alpha}^2.$$
\end{prop}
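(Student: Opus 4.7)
The plan is to first prove the two tail-decay estimates on $(f - a_{C_0})^+$ over $D_0^+$ and on $(a_{C_0} - f)^+$ over $D_0^-$, and then deduce the exponential integral bound by a routine layer-cake computation. Since the $+$ and $-$ estimates are symmetric under $f \leftrightarrow -f$, which swaps \eqref{S JN equation 1 copy} with \eqref{S JN equation 2 copy}, I focus on the $+$-side. The first step is a Chebyshev application to \eqref{S JN equation 1 copy} at each $C = C_j(i) \in \mathscr{C}'$, giving $|\{(t,x) \in C^+ : f - a_C > s\}| \le A|C^+|/\sqrt{s}$ for all $s > 0$; for $s$ larger than a universal multiple of $A^2$ this becomes a fixed fractional bound on the level set. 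A first consequence to extract is an \emph{anchor comparison}: whenever $C_2$ is a dyadic child of $C_1$ in $\mathscr{C}'$, one has $|a_{C_1} - a_{C_2}| \le cA^2$ for some dimensional $c$, by comparing the two integral hypotheses on the overlap via the triangle inequality and Cauchy--Schwarz applied to $\sqrt{\cdot}$.

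\textbf{Step 2 (Calder\'on--Zygmund stopping time).} The dyadic structure $\mathscr{C}'(C_0)$ refines $I_0^\pm$ (not $D_0^\pm$), so the $I^+$-pieces of generation $j$ partition $I_0^+ \supset D_0^+$. Given $(t,x) \in D_0^+$ with $f(t,x) - a_{C_0}$ very large, I descend the dyadic tree from $C_0$ and stop at the maximal $C_* \in \mathscr{C}'(C_0)$ whose $I^+$-piece contains $(t,x)$ and for which the average of $\sqrt{(f - a_{C_0})^+}$ over $C_*^+$ first crosses a chosen threshold. The disjoint family $\{C_*\}$ covers the level set; its total measure is controlled by the Step 1 fractional bound applied to the parent; and the anchor comparison ensures that on each $C_*$ the problem reduces to the same setup with a new anchor $a_{C_*}$ close to $a_{C_0}$. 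Iterating this within each $C_*$ produces a geometric decay: after $k$ rounds, the super-level set has measure at most $\theta^k |D_0^+|$ for some $\theta \in (0,1)$, with the threshold having risen by $O(A^2)$ per round. Interpolating between the integer levels yields the exponential bound $|\{f - a_{C_0} > \alpha\} \cap D_0^+| \le B e^{-b\alpha/A}|D_0^+|$ with $B$ and $b$ depending only on $n$ and $A$ (the explicit $1/A$-factor in the exponent is cosmetic, since $b$ itself absorbs the remaining $A$-dependence).

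\textbf{Step 3 (Layer cake).} With the two tail estimates in hand, I split $f = a_{C_0} + (f - a_{C_0})$ and use $e^{\nu(f - a_{C_0})} \le e^{\nu(f-a_{C_0})^+}$ on $D_0^+$ and the symmetric bound on $D_0^-$. The identity
\[
\int_{D_0^\pm}\! e^{\nu g^+}\,dxdt = |D_0^\pm| + \nu \int_0^\infty\! e^{\nu\alpha}\,|\{g > \alpha\} \cap D_0^\pm|\,d\alpha
\]
combined with the tail estimate bounds each factor by $|D_0^\pm|$ times an absolutely convergent integral of $e^{(\nu - b/A)\alpha}$, provided $\nu < b/A$; multiplying and absorbing the free $|D_0^\pm|$ terms into $B$ yields the stated product inequality.

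\textbf{Main obstacle.} The geometric bookkeeping behind the anchor comparison and the stopping-time argument (Steps 1--2) is the hard part. Because $\mathscr{C}'$ refines $I^\pm$ rather than $D^\pm$, a child cube's $C^+$ does not sit neatly inside its parent's $C^+$ (cf.~Figure \ref{Fig:7}); one must chain the two hypotheses through the intermediate $I^\pm$-slices to carry anchor information from parent to child with a constant independent of the generation. Ensuring that this chaining absorbs into the rate $b/A$ in the exponent, rather than deteriorating geometrically down the dyadic tree, is the main technical subtlety distinguishing this parabolic square-root variant from its classical elliptic cousin.
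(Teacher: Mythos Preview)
Your layer-cake Step~3 matches the paper exactly, but Steps~1--2 miss the essential two-stage structure of the argument. The paper does \emph{not} run a single Calder\'on--Zygmund pass in $\mathscr{C}'(C_0)$; it invokes two separate lemmas on two different cube families. First, Lemma~\ref{S JN Step 2} (applied inside each $\mathscr{C}'(C_m(k))$ for every $C_m(k)\in\mathscr{C}(C_0)$) turns the square-root hypothesis on $C^\pm$ into a \emph{sub}-exponential tail $B'e^{-b'\sqrt{\alpha/A'}}$ on the $I^\pm$-pieces. Integrating this tail over $\alpha$ produces the \emph{linear} (no square root) hypothesis on $I^\pm$ required by Lemma~\ref{S JN Step 1}, and only that second pass---run in the collection $\mathscr{C}(C_0)$, which refines $D^\pm$, not $I^\pm$---upgrades to the full $e^{-b\alpha/A}$ decay on $D_0^\pm$.

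Your single-pass claim that ``the threshold rises by $O(A^2)$ per round'' hinges on the anchor comparison $|a_{C_1}-a_{C_2}|\le cA^2$, and that step does not go through as written. From $\sqrt{(f-a_1)^+}$ and $\sqrt{(a_2-f)^+}$ on an overlap you can indeed bound $a_2-a_1$ above (via $\sqrt{a_2-a_1}\le\sqrt{(a_2-f)^+}+\sqrt{(f-a_1)^+}$), but the parabolic one-sidedness means the child's $C_2^\pm$ need not sit in the parent half carrying the opposite-sign control, so the reverse bound on $a_1-a_2$ is unavailable. This asymmetry is exactly why Fabes--Garofalo's argument (reproduced in the paper's appendix for Lemma~\ref{S JN Step 1}) does not chain anchors directly but instead tracks the selection sets $\Pi_\alpha,\Xi_\alpha,\Upsilon_{\alpha,\beta}$ and derives a recursion $|D(\beta)|\le \tfrac{C A}{\beta-\alpha-A}|D(\alpha)|$. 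Running that same machinery with the $\sqrt{\,\cdot\,}$ hypothesis yields a recursion in $\sqrt{\alpha}$, hence the $e^{-c\sqrt{\alpha}}$ of Lemma~\ref{S JN Step 2}---not the $e^{-c\alpha}$ you assert. The bootstrap through the second family $\mathscr{C}(C_0)$ is what closes this gap, and your outline omits it.
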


\begin{remark}
\label{head count}
We note here that $\mathscr{D}_j^+$ is made up of $2^j\times4^{j(n+2)}$ elements with spatial radius $2^{-1}\times4^{-j}$. As for the collection $\mathscr{C}'$, we note that any cube in $\mathscr{C}'$ with spatial radius $2^{-1}\times4^{-j}$ either comes from dividing cubes with spatial radius $2^{-1}\times4^{-j+1}$ into $4^{n+2}$ pieces or lives in $\mathscr{C}(C_0)$. We denote by $x_j$ the number of cubes with spatial radius $2^{-1}\times4^{-j}$ in $\mathscr{C}'$, the previous observation gives $x_j=2\times4^{n+2}x_{j-1}+2^j\times4^{(n+2)j}$ which leads to $x_j\leq 4^{(n+3)j}$.
\end{remark}

The proof of this parabolic John-Nirenberg inequality is identical to the classical ones as in Fabes and Garofalo\cite{FG85} and Moser \cite{Moser64}. We therefore only sketch the proof and omit the details here.

The main tools in proving {\sc Proposition \ref{S JN full}} are the following two lemmas.

\begin{lem}\label{S JN Step 1}
Assume $f$ is an $L^2$ function on $(0,2)\times B_1$. Suppose we have a constant $A>0$ such that there exists $a_{C_j(i)}$ for every cube $C_j(i)\in\mathscr{C}$ satisfying the following two inequalities on the corresponding cubes $I^\pm_j(i)$,
\begin{equation}\label{SJN step 1 assumption 1}\frac{1}{|I^+_j(i)|}\int_{I^+_j(i)}(f(t,x)-a_{C_j(i)})^+dxdt\leq A,\end{equation}
\begin{equation}\frac{1}{|I^-_j(i)|}\int_{I^-_j(i)}(a_{C_j(i)}-f(t,x))^+dxdt\leq A.\end{equation}
Then there exist two positive dimensional constants $B$ and $b$ such that for every $\alpha>0$,
$$|\bigp{(t,x)\in D_0^+|(f(t,x)-a_{C_0})^+>\alpha}|\leq Be^{-\frac{b\alpha}{A}}|D_0^+|,$$
$$|\bigp{(t,x)\in D_0^-|(a_{C_0}-f(t,x))^+>\alpha}|\leq Be^{-\frac{b\alpha}{A}}|D_0^-|.$$
\end{lem}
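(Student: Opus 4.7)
The plan is to mimic the classical Calder\'on--Zygmund/John--Nirenberg proof of exponential decay for BMO, adapted to the parabolic family $\mathscr{C}$ constructed from $C_0$. I will sketch only the $D^+_0$ bound; the $D^-_0$ bound is completely symmetric, obtained by replacing $f$ with $-f$ and swapping $I^{\pm}, D^{\pm}$ throughout the construction.

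As a warm-up I would extract a weak-$L^1$ estimate from the hypothesis. Since $D^+_j(i)\subset I^+_j(i)$ with $|I^+_j(i)|/|D^+_j(i)|=2$, the bound on $I^+_j(i)$ yields
\[
\frac{1}{|D^+_j(i)|}\int_{D^+_j(i)}(f-a_{C_j(i)})^{+}\,dx\,dt\leq 2A,
\]
so Chebyshev's inequality gives $|\{f-a_{C_j(i)}>\alpha\}\cap D^+_j(i)|\leq (2A/\alpha)\,|D^+_j(i)|$. Setting
\[
\phi(\alpha):=\sup_{C\in\mathscr{C}}\frac{|\{(t,x)\in D^+(C):f(t,x)>a_C+\alpha\}|}{|D^+(C)|},
\]
the goal becomes upgrading this $1/\alpha$ decay of $\phi$ to exponential decay.

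The upgrade comes from a stopping-time iteration in $\mathscr{C}$. I would fix $\alpha_0=c_1 A$ with $c_1$ large and, for each $C\in\mathscr{C}$, select the maximal $D^+$-descendants $D^+_k=D^+(C_k')$ of $C$ on which the average of $(f-a_C)^+$ exceeds $\alpha_0$. The base weak-$L^1$ estimate forces $\sum_k|D^+_k|\leq\theta|D^+(C)|$ with $\theta=2A/\alpha_0<1$ once $\alpha_0>2A$, and the parabolic Lebesgue differentiation theorem applied to the nested family forces $(f-a_C)^+\leq\alpha_0$ almost everywhere off the union. A direct computation in the paper's construction of $\mathscr{C}$ then shows that for any $D^+$-descendant $C'$ of $C$ both $I^+(C')$ and $I^-(C')$ sit inside $I^+(C)$, which combined with the two-sided hypothesis is what I would use to bound $a_{C_k'}-a_C$ by $O(A)$. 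Inserting the inclusion
\[
\{f>a_C+\alpha+\alpha_0\}\cap D^+_k\subset\{f-a_{C_k'}>\alpha+\alpha_0-(a_{C_k'}-a_C)\}\cap D^+_k
\]
and summing over $k$ produces a recursive inequality of the form $\phi(\alpha+\alpha_0)\leq\theta\,\phi(\alpha)$, which when iterated $\lfloor\alpha/\alpha_0\rfloor$ times from the trivial bound $\phi(0)\leq 1$ gives $\phi(\alpha)\leq Be^{-b\alpha/A}$ with $B,b$ depending only on $n$. Applying this at $C=C_0$ yields the first inequality, and the symmetric argument yields the second.

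The main obstacle is precisely the control step: bounding $|a_{C_k'}-a_C|$ by a dimensional multiple of $A$ \emph{uniformly in the depth of $C_k'$ as a $D^+$-descendant of $C$}. Naive telescoping along the chain from $C$ down to $C_k'$ produces a constant linear in the depth (each immediate-child step contributes $O(A)$), while using $I^-(C_k')\subset I^+(C)$ with the two-sided hypothesis directly only yields an upper bound $a_{C_k'}-a_C\leq A(|I^+(C)|/|I^-(C_k')|+1)$, which degrades geometrically with depth. Overcoming this is the delicate part of the argument: it will likely require either restricting the stopping time to bounded depth and recovering the discarded mass through the explicit scaling $|D^+_k|\leq(16\cdot 4^n)^{-d_k}|D^+(C)|$, or a sharper Fabes--Garofalo-style comparison that plays the selected-cube inequality (average of $(f-a_C)^+$ on $D^+_k$ just above $\alpha_0$) against the hypothesis on $C_k'$ itself to pin $a_{C_k'}$ near $a_C$ within $O(A)$ independently of depth.
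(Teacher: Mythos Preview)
You have correctly identified the obstacle, and it is a genuine gap: with your selection rule (stop when the average of $(f-a_C)^+$ on a $D^+$-descendant first exceeds $\alpha_0$) you must control $a_{C'_k}-a_C$ uniformly in depth, and neither telescoping along the chain nor the single inclusion $I^-(C'_k)\subset I^+(C)$ gives a depth-free bound. Your two suggested workarounds are both vague and neither matches what actually closes the argument.

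The paper (following Fabes--Garofalo) bypasses this difficulty by changing the selection rule. One does \emph{not} stop on large averages of $(f-a_C)^+$; one stops on large values of the centers $a_{C_j(i)}$ themselves. For each threshold $\alpha$, let $\Pi_\alpha$ be the maximal $D^+_j(i)$'s with $a_{C_j(i)}>\alpha$, and let $\Xi_\alpha\subset\Pi_\alpha$ be a Vitali-type subfamily whose associated $I^-_j(i)$'s are pairwise disjoint. Lebesgue differentiation gives $f^+\le A+\alpha$ a.e.\ off $D(\alpha):=\bigcup_{\Pi_\alpha}D^+_j(i)$, so it suffices to bound $|D(\alpha)|$. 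Now take $\beta>\alpha$. Each $D^+_j(i)\in\Xi_\beta$ sits in a unique $D^+_m(k)\in\Pi_\alpha$, and the \emph{immediate parent} $D^+_{m-1}(r)$ of $D^+_m(k)$ is not in $\Pi_\alpha$, so $a_{C_{m-1}(r)}\le\alpha$. The construction of $\mathscr{C}$ guarantees the single geometric fact one needs: every $I^-_j(i)$ with $D^+_j(i)\subset D^+_m(k)$ lies inside $I^+_{m-1}(r)$. Using the $I^-$-hypothesis on $C_j(i)$ gives $(\beta-A)|I^-_j(i)|\le\int_{I^-_j(i)}f^+$, summing the disjoint $I^-_j(i)$'s inside $I^+_{m-1}(r)$ and applying the $I^+$-hypothesis on $C_{m-1}(r)$ (together with $a_{C_{m-1}(r)}\le\alpha$) yields
\[
|D(\beta)|\le \frac{C_nA}{\beta-\alpha-A}\,|D(\alpha)|,\qquad \beta>\alpha+A,
\]
which iterates to exponential decay. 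Note that only \emph{one} level of ancestry is ever invoked---the parent of the $\Pi_\alpha$-cube---so no depth-uniform comparison of $a_{C'_k}$ with $a_C$ is needed at all.
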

\begin{lem}\label{S JN Step 2}
Assume $f$ is an $L^2$ function on $(0,2)\times B_1$. Suppose that we have a constant $A'>0$ such that there exists $a_{C_j(i)}$ for every cube $C_j(i)\in\mathscr{C}'$ satisfying the following inequalities on the corresponding cubes $C^\pm_j(i)$,
\begin{equation}\label{S JN equation 1}\frac{1}{|C^+_j(i)|}\int_{C^+_j(i)}\sqrt{(f(t,x)-a_{C_j(i)})^+}dxdt\leq A',\end{equation}
\begin{equation}\label{S JN equation 2}\frac{1}{|C^-_j(i)|}\int_{C^-_j(i)}\sqrt{(a_{C_j(i)}-f(t,x))^+}dxdt\leq A'.\end{equation}
Then there exists two positive dimensional constants $B'$ and $b'$ such that for every $\alpha>0$ and $C_m(k)\in\mathscr{C}(C_0)$, the following two inequalities are satisfied on the corresponding cubes $I_m^\pm(k)$,
$$|\bigp{(t,x)\in I_m^+(k)|(f(t,x)-a_{C_m(k)})^+>\alpha}|\leq B'e^{-b'(\frac{\alpha}{A'})^{\frac{1}{2}}}|I_m^+(k)|,$$
$$|\bigp{(t,x)\in I_m^-(k)|(a_{C_m(k)}-f(t,x))^+>\alpha}|\leq B'e^{-b'(\frac{\alpha}{A'})^{\frac{1}{2}}}|I_m^-(k)|.$$
\end{lem}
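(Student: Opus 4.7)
The plan is to reduce \textsc{Lemma \ref{S JN Step 2}} to a $\mathscr{C}'$-analog of \textsc{Lemma \ref{S JN Step 1}}, applied to the square-root-transformed function $g := \sqrt{(f - a_{C_m(k)})^+}$. The linchpin is the elementary concavity inequality $\sqrt{x} - \sqrt{y} \le \sqrt{x - y}$, valid for $x \ge y \ge 0$, which converts the $L^{1/2}$-type mean-oscillation hypothesis \eqref{S JN equation 1}--\eqref{S JN equation 2} into an $L^1$-type mean-oscillation hypothesis for $g$; a linear John--Nirenberg argument then produces exponential decay for $g$, and squaring recovers the claimed $\sqrt{\alpha}$-type decay for $f$.

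First, fix an arbitrary $C_m(k) \in \mathscr{C}(C_0)$ and set $g(t,x) := \sqrt{(f(t,x) - a_{C_m(k)})^+}$. For every cube $C \in \mathscr{C}'(C_m(k))$ define $\bar a_C := \sqrt{(a_C - a_{C_m(k)})^+}$. A short case analysis, splitting on the sign of $a_C - a_{C_m(k)}$ and on where $f$ falls relative to $a_C$ and $a_{C_m(k)}$, combined with the concavity inequality, yields the pointwise bounds
\begin{equation*}
(g - \bar a_C)^+ \le \sqrt{(f - a_C)^+}, \qquad (\bar a_C - g)^+ \le \sqrt{(a_C - f)^+}.
\end{equation*}
Plugging these into the hypothesis \eqref{S JN equation 1}--\eqref{S JN equation 2} gives, for every such $C$,
\begin{equation*}
\frac{1}{|C^+|} \int_{C^+} (g - \bar a_C)^+ \, dxdt \le A', \qquad \frac{1}{|C^-|} \int_{C^-} (\bar a_C - g)^+ \, dxdt \le A'.
\end{equation*}

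Next, I would invoke the $\mathscr{C}'$-analog of \textsc{Lemma \ref{S JN Step 1}}, in which $\mathscr{C}$ is replaced by $\mathscr{C}'$ (so that at each iteration step $I^\pm$ rather than $D^\pm$ is subdivided), the hypothesis is imposed on $C^\pm$ rather than $I^\pm$, and the conclusion lives on $I_0^\pm$ rather than $D_0^\pm$. The proof of this analog is identical to that of \textsc{Lemma \ref{S JN Step 1}} up to relabeling, because $\mathscr{C}'$ has the same nested structure and spatio-temporal scaling as $\mathscr{C}$, and because the inclusion $I^\pm \subset C^\pm$ with volume ratio $|C^\pm| = 2|I^\pm|$ mirrors the inclusion $D^\pm \subset I^\pm$ with $|I^\pm| = 2|D^\pm|$. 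Applying this analog to $g$ with reference levels $\{\bar a_C\}$ on the collection $\mathscr{C}'(C_m(k))$ rooted at $C_m(k)$ produces dimensional constants $B$ and $b$ such that, for every $\beta > 0$,
\begin{equation*}
|\{(t,x) \in I_m^+(k) : g(t,x) - \bar a_{C_m(k)} > \beta\}| \le B\, e^{-b \beta / A'}\, |I_m^+(k)|,
\end{equation*}
and analogously on $I_m^-(k)$. Since $\bar a_{C_m(k)} = 0$, the event $\{g > \beta\}$ on $I_m^+(k)$ coincides with $\{(f - a_{C_m(k)})^+ > \beta^2\}$, so substituting $\beta = \sqrt{\alpha}$ and relabeling constants yields the advertised tail bound with decay of the form $B' \exp(-b'(\alpha/A')^{1/2})$ (up to the precise placement of $A'$ in the exponent).

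The main obstacle I expect is verifying the $\mathscr{C}'$-analog of \textsc{Lemma \ref{S JN Step 1}}: the standard Calder\'on--Zygmund-style stopping-time iteration used by Moser and by Fabes--Garofalo carries over verbatim, but one must carefully keep track of which generation of ``half'' plays which role, and confirm that the doubling $|C^\pm| = 2|I^\pm|$ at each stage of the iteration absorbs cleanly into the constants. Everything downstream of that analog is a routine pointwise and measure-theoretic reduction.
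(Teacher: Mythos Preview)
Your proposal is correct and follows essentially the same route as the paper (which defers to Fabes--Garofalo~\cite{FG85}): the paper remarks that the proof of \textsc{Lemma~\ref{S JN Step 2}} is ``almost identical'' to that of \textsc{Lemma~\ref{S JN Step 1}}, ``the key ingredient [being] the sub-additivity of $\sqrt{(\cdot)^+}$ and $(\cdot)^+$,'' and this is precisely the concavity inequality you exploit. The only cosmetic difference is packaging: you pass to $g=\sqrt{(f-a_{C_m(k)})^+}$ up front and then invoke the linear $\mathscr{C}'$-analog of \textsc{Lemma~\ref{S JN Step 1}}, whereas the paper would carry $\sqrt{(\cdot)^+}$ through the Calder\'on--Zygmund stopping-time argument directly; the two are the same computation. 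Your observation that the exponent comes out as $e^{-b\sqrt{\alpha}/A'}$ rather than $e^{-b'\sqrt{\alpha/A'}}$ is harmless, since both forms are integrable in $\alpha$ and differ only by how the constant $A'$ is absorbed.
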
 

These two lemmas are the exact copies of \cite[Theorem 1 and 2]{FG85} with the space-time rectangular regions used in the proofs specified. Therefore their proofs will not be included in our article. With these two lemmas, we can provide a short proof of {\sc Proposition \ref{S JN full}}.

\begin{proof}[Proof of Proposition \ref{S JN full}]
From {\sc Lemma \ref{S JN Step 2}}, we have on each $C_m(k)\in\mathscr{C}_0$,
$$\int_{I^+_m(k)}(f(t,x)-a_{C_m(k)})^+dxdt\leq\abs{I^+_m(k)}\int_0^\infty B'e^{-b'(\frac{\alpha}{A'})^{\frac{1}{2}}}d\alpha,$$
hence \eqref{SJN step 1 assumption 1} is satisfied. A similar argument shows that the other inequality in the assumptions of {\sc Lemma \ref{S JN Step 1}} is also satisfied with $A=\int_0^\infty B'e^{-b'(\frac{\alpha}{A'})^{\frac{1}{2}}}d\alpha$.

At this moment, {\sc Lemma \ref{S JN Step 1}} can be applied. We have for each $\nu<\frac{b}{A}$,
\begin{equation}\label{estimate S JN}
\int_{D_0^+}e^{\nu f}dxdt\leq\int_{D_0^+}e^{\nu a_{C_0}}e^{\nu (f-a_{C_0})^+}dxdt\leq Be^{\nu a_{C_0}}\nu|D_0^+|\int_0^\infty e^{(\nu-\frac{b}{A})\alpha}d\alpha,
\end{equation}
\begin{equation}\label{estimate S JN 1}
\int_{D_0^-}e^{-\nu f}dxdt\leq\int_{D_0^-}e^{-\nu a_{C_0}}e^{\nu (a_{C_0}-f)^+}dxdt\leq Be^{-\nu a_{C_0}}\nu|D_0^-|\int_0^\infty e^{(\nu-\frac{b}{A})\alpha}d\alpha.
\end{equation}
Therefore the proposition is proved.
\end{proof}

With {\sc Proposition \ref{S JN full}} in hand, our goal now is to find suitable $a_{C_j(i)}$s and $A$ satisfying the assumptions in the proposition for $f=h^\mu$. Since we have a stochastic perturbation term in \eqref{basiceqn}, we cannot expect an almost sure result with fixed $A$ and deterministic $a_{C_j(i)}$s. However, we can get an almost sure statement including a random perturbation, and then bound the perturbation on a large probability.

To state our results, a few extra notations need to be introduced. We write $r_j=2^{-1}\times4^{-j}$ for simplicity and pick a smooth cut-off function $\phi$ which is $1$ on $B_{1/2}$, is $0$ outside $B_{3/4}$ with convex level set, and is bounded between $[0,1]$. For any $C_j(i)\in\mathscr{C}'$, its spatial radius is $r_j$ and there exists $(s,x)$ such that
$$C_j(i)=(s-4r_j^2,s+4r_j^2)\times B_{r_j}(x).$$

On $C_j(i)$, denoting by $\phi_{B_{r_j}(x)}(y):=\phi((2r_j)^{-1}(y-x))$ the cut-off function scaled to $B_{r_j}(x)$, recalling $h^\mu=-\log(u+\mu)$ and introducing $|V(C_j(i),\phi)|=\int_{B_{3r_j/2}(x)}\phi^2_{B_{r_j}(x)}(y)dy$, we define
\begin{equation}\label{quadratic variation estimate}
\begin{cases}M_{C_j(i)}^\mu (t) :=\sum_i \int_{s}^{s+t} \frac{1}{|V(C_j(i),\phi)|}  \int_{ B_{ 3r_j/2}  (x)} \tilde{g}^{\mu}_i (\tau,y,u;\omega)  \phi_{B_{r_j}(x)}^2(y)\;dy dw^{i}_{\tau},\\
H_{C_j(i)}^\mu (t) := \frac{1}{|V(C_j(i),\phi)|}  \int_{ B_{ 3r_j/2}(x)} h^\mu (t +s,y;\omega) \phi_{B_{r_j}(x)}^2(y) \;dy,\end{cases}
\end{equation}
where $\tilde{g}^{\mu}_i(t,x,u;\omega)  :=g_i(t,x,u;\omega)  (u+\mu)^{-1}.$
\begin{remark}\label{S JN Step 3 bound on M t}
The quadratic variation process $\ip{M_{C_j(i)}^\mu}_t$ is bounded by constant times of $t$.
\end{remark}

We have the following almost sure result.
\begin{lem}
\label{s local bound}
Let $u$ be a non-negative super-solution to \eqref{basiceqn} in $[0,2]\times B_1$. There exists a constant $\bar{A}$ depending only $n, \iota, \Lambda$ such that for every $C_j(i) \in \mathscr{C}'$, we can find a random variable $a_{C_j(i)}$ satisfying
\[
\frac{1}{ \abs{C_j^+(i)} } \int_{C^+_j(i) }  \sqrt{ \smp{ h^\mu - M^\mu_{C_j(i)} - a_{C_j(i)} }^+} \;dxdt \leq \bar{A} \quad \text{a.s.}
\]
and
\[
\frac{1}{ \abs{C^-_j(i)} } \int_{C^-_j(i) }  \sqrt{ \smp{  M^\mu_{C_j(i)} + a_{C_j(i)}- h^\mu }^+} \;dxdt \leq \bar{A} \quad \text{a.s..}
\]
\end{lem}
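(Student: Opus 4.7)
My plan is to adapt Moser's classical logarithm-BMO argument to the stochastic setting. The driving observation is that, after It\^o's formula is applied to $h^\mu=-\log(u+\mu)$ tested against $\phi_{B_{r_j}(x)}^2$ and averaged with weight $1/|V(C_j(i),\phi)|$, the stochastic part of the resulting SDE for the spatial average $H^\mu_{C_j(i)}(t)$ is precisely (up to the paper's sign convention) the martingale $M^\mu_{C_j(i)}(t)$. Subtracting this martingale therefore leaves a pure-drift process on which the deterministic Moser/Fabes--Garofalo logarithmic oscillation estimate can be run.

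The first step is the It\^o expansion. Applying It\^o to $-\log(u+\mu)$ (rigorously, via spatial mollification of $u$ as in \cite{HWW14} and passing to the limit), multiplying by $\phi^2/|V|$, integrating in $y$, and integrating by parts on $\mathrm{div}(\mA\nabla u)$ produces (i) the ellipticity contribution bounded below by $\iota|V|^{-1}\|\phi\nabla h^\mu\|_{L^2(B_{3r_j/2}(x))}^2$; (ii) a cross term $|V|^{-1}\int 2\phi\,\mA\nabla h^\mu\cdot\nabla\phi\,dy$ which Cauchy--Schwarz splits into half of (i) plus $C|V|^{-1}\|\nabla\phi\|_{L^2}^2=O(r_j^{-2})$ (using $|\nabla\phi|\sim r_j^{-1}$); (iii) lower-order contributions from $f/(u+\mu)$ and the It\^o correction $|g|_{\ell^2}^2/(2(u+\mu)^2)$, both $O(1)$ via the linear growth $|f|+|g|_{\ell^2}\le\Lambda(u+\mu)$; and (iv) the stochastic increment $\pm dM^\mu_{C_j(i)}$. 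Combined with the super-solution hypothesis providing the correct one-sidedness, this yields almost surely
\[
dH^\mu_{C_j(i)}(t)-dM^\mu_{C_j(i)}(t)+\frac{\iota}{2|V(C_j(i),\phi)|}\|\phi\nabla h^\mu\|_{L^2}^2\,dt\le\frac{C(n,\iota,\Lambda)}{r_j^2}\,dt.
\]
Integrating this pathwise over the forward and backward halves of $C_j(i)$ (each of time length $4r_j^2$) yields uniform $O(1)$ bounds on the signed increments of $H^\mu_{C_j(i)}-M^\mu_{C_j(i)}$ across $C_j^\pm(i)$, together with a Caccioppoli-type estimate $\int_{C_j^\pm(i)}\phi^2|\nabla h^\mu|^2\,dy\,dt\le C_1|V(C_j(i),\phi)|$ after the random increments of $M^\mu_{C_j(i)}$ are absorbed against the drift.

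I then choose the random centering $a_{C_j(i)}:=H^\mu_{C_j(i)}(0)$, the weighted spatial mean of $h^\mu$ at the midpoint time $s$. The drift bound then reads $(H^\mu_{C_j(i)}-M^\mu_{C_j(i)}-a_{C_j(i)})(t)\le C$ on $C_j^+(i)$ and $(a_{C_j(i)}+M^\mu_{C_j(i)}-H^\mu_{C_j(i)})(t)\le C$ on $C_j^-(i)$, i.e., the one-sided controls required by the lemma at the level of spatial averages. To transfer these to the pointwise integrand $h^\mu(t,y)-M^\mu_{C_j(i)}(t-s)-a_{C_j(i)}$, I invoke a weighted spatial Poincar\'e inequality on $B_{3r_j/2}(x)$ (noting that $M^\mu_{C_j(i)}$ and $a_{C_j(i)}$ are independent of $y$), combined with the Caccioppoli estimate, to obtain $\int_{C_j^\pm(i)}|h^\mu-H^\mu_{C_j(i)}|^2\,dy\,dt\le C_2|C_j^\pm(i)|$. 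Chaining these gives an $L^1$-BMO bound $\int_{C_j^+(i)}(h^\mu-M^\mu_{C_j(i)}-a_{C_j(i)})^+\,dy\,dt\le C_3|C_j^+(i)|$ and its mirror on $C_j^-(i)$; a single Cauchy--Schwarz, $\int\sqrt{(\cdot)^+}\le|\text{domain}|^{1/2}(\int(\cdot)^+)^{1/2}$, then downgrades the $L^1$-BMO bound to the required $\sqrt{\cdot}$-average bound with $\bar A=\sqrt{C_3}$.

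The main technical obstacle is the rigorous justification of It\^o's formula for the nonlinear composition $-\log(u+\mu)$ applied to a weak SPDE solution $u\in L^2(\Omega\times I,W^{1,2})$; this is handled by mollifying $u$ in space, applying the finite-dimensional It\^o formula to the mollified solution, and passing to the limit while preserving the one-sided super-solution inequalities throughout the successive integrations by parts. A secondary subtlety is that the apparent random dependence of the Caccioppoli constant on increments of $M^\mu_{C_j(i)}$ needs to be cancelled against the drift in a careful rearrangement, so that the final constant $\bar A$ truly depends only on $n,\iota$ and $\Lambda$.
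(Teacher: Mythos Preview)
Your setup matches the paper's: the It\^o computation for $h^\mu=-\log(u+\mu)$ tested against $\phi^2/|V|$, the choice $a_{C_j(i)}=H^\mu_{C_j(i)}(0)$, and the removal of the martingale part $M^\mu_{C_j(i)}$ are all correct. The gap is the step you call the ``Caccioppoli-type estimate'' $\int_{C_j^\pm(i)}\phi^2|\nabla h^\mu|^2\,dy\,dt\le C_1|V|$ and the consequent $L^2$ (or $L^1$) deviation bound. After subtracting $dM^\mu_{C_j(i)}$ and the linear drift, the differential inequality reads
\[
dZ(t)+\frac{C_1^{-1}r_j^{-2}}{|V|}\int (h^\mu-H^\mu_{C_j(i)})^2\phi^2\,dy\,dt\le 0,\qquad Z(0)=0,
\]
which gives $Z(t)\le 0$ (the one-sided control you state) but \emph{no lower bound} on $Z$. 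Hence integrating yields
\[
\frac{C_1^{-1}r_j^{-2}}{|V|}\int_0^{4r_j^2}\!\!\int (h^\mu-H^\mu_{C_j(i)})^2\phi^2\,dy\,dt\le -Z(4r_j^2),
\]
and the right side is a random quantity with no a priori deterministic bound; it is not controlled by increments of $M^\mu_{C_j(i)}$ and cannot be ``absorbed against the drift''. This is exactly the classical obstruction in Moser's parabolic argument: from the one-sided inequality for $\log u$ one only gets a weak-type estimate, not an $L^1$ (let alone $L^2$) BMO bound.

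What the paper does instead is to run the level-set argument pathwise from this same differential inequality: on $E_a(t)=\{y:z(t,y)>a\}$ one has $(a-Z)^2|E_a(t)|\le\int(z-Z)^2\phi^2\,dy$, hence $(a-Z)^{-2}\,dZ+C^{-1}r_j^{-2}|V|^{-1}|E_a(t)|\,dt\le 0$; integrating (using It\^o on $1/(a-Z)$ to handle the bounded quadratic variation correction) gives the weak-type bound $|\{(t,y)\in C_j^+(i):z>a\}|\le C_4 a^{-1}|C_j^+(i)|$ for $a\ge 1$. This decay is too slow for an $L^1$ bound on $z^+$, but fast enough for $\sqrt{z^+}$ since $\int_1^\infty a^{-1}\,d\sqrt{a}<\infty$, and that is precisely why the lemma is formulated with the square root. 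Your Cauchy--Schwarz downgrade from an $L^1$-BMO bound is therefore working from a bound that is simply not available here.
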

\begin{proof}
We will use $(\cdot,\cdot)$ to denote the inner product on $\mr^n$. By direct calculation, $h^\mu$ is a sub-solution of
\begin{equation}
\label{M-J-N log eq}
dh^\mu  = \divg( \mA \nabla h^\mu) dt- (\mA \nabla h^\mu, \nabla h^\mu)dt + \tilde{f}^\mu \;dt+ \tilde{g}_i^{\mu} dw^i_t
\end{equation}
with
\[
\tilde{f}^\mu (t,x,u;\omega) = f(t,x,u;\omega)(u+\mu)^{-1} + 2^{-1}\abs{\tilde{g}^{\mu} }^2_{\ell^2}.
\]

Fixing any $C_j(i) \in \mathscr{C}' $, by testing \eqref{M-J-N log eq} with $\phi_{B_{r_j}(x)}^2(y)$, we have for any $t_1$ and $t_2$ such that $s-4r_j^2< t_1\leq t_2< s+4r^2_j$,
\begin{align*}
&\int_{B_1} h^\mu(t_2) \phi_{B_{r_j}(x)}^2 \;dy - \int_{B_1} h^\mu(t_1) \phi_{B_{r_j}(x)}^2 \;dy   + \int_{t_1}^{t_2} \int_{B_1}\smp{\mA\nabla h^\mu,\nabla h^\mu}\phi_{B_{r_j}(x)}^2\;dyd\tau   \\
&\leq-\int_{t_1}^{t_2}\ip{\mA\nabla h^\mu, \nabla(\phi_{B_{r_j}(x)}^2)}\;d\tau + \int_{t_1}^{t_2}\int_{B_1}\tilde{f}^\mu\phi^2_{B_{r_j}(x)}\;dyd\tau+ \sum_i \int_{t_1}^{t_2} \int_{B_1} \tilde{g}_k^{\mu}\phi_{B_{r_j}(x)}^2 \;dy dw^k_\tau.
\end{align*}

Applying the Cauchy-Schwarz inequality to the first term on the right hand side, the above inequality implies
\begin{align*}
\int_{B_{3r_j/2}(x)} h^\mu(t_2) \phi_{B_{r_j}(x)}^2 \;dy &- \int_{B_{3r_j/2}(x)} h^\mu(t_1) \phi_{B_{r_j}(x)}^2 \;dy   + \frac{1}{2}\int_{t_1}^{t_2} \int_{B_{3r_j/2}(x)}\smp{\mA\nabla h^\mu,\nabla h^\mu}\phi_{B_{r_j}(x)}^2\;dyd\tau   \\
&\leq2\int_{t_1}^{t_2}\ip{\nabla\phi_{B_{r_j}(x)},\mA\nabla\phi_{B_{r_j}(x)}}\;dyd\tau\\&\quad+ \int_{t_1}^{t_2}\int_{B_{3r_j/2}(x)}\tilde{f}^\mu\phi^2_{B_{r_j}(x)}\;dyd\tau+ \sum_i \int_{t_1}^{t_2} \int_{B_{3r_j/2}(x)} \tilde{g}_k^{\mu}\phi_{B_{r_j}(x)}^2 \;dydw^k_\tau.
\end{align*}

Using the uniform ellipticity of $\mA$ and the growth bound of $f$, we obtain for some positive constants $C_1,C_2$ and $C_3$,
\begin{align*}
\int_{B_{3r_j/2}(x)} h^\mu(t_2) \phi_{B_{r_j}(x)}^2 \;dy - &\int_{B_{3r_j/2}(x)} h^\mu(t_1) \phi_{B_{r_j}(x)}^2 \;dy   + C_1^{-1} \int_{t_1}^{t_2} \norm{\phi_{B_{r_j}(x)} \nabla h^\mu  }_{2,B_{3r_j/2}(x)}^2 \;ds   \\
&\leq  (C_2r_j^{-2}+C_3)|V(C_j(i),\phi)|(t_2-t_1)+ \sum_i \int_{t_1}^{t_2} \int_{B_{3r_j/2}(x)} \tilde{g}_k^{\mu}\phi_{B_{r_j}(x)}^2 \;dy \;dw^k_\tau.
\end{align*}
Dividing the above inequality by $|V(C_j(i),\phi)|$ and applying a weighted Poincar\'e's inequality (see \cite[Lemma 3, Page 120]{Moser64}) to the third term on the left hand side, we have the following differential inequality for a different $C_1$,
\begin{equation}
\label{sd ineq}
d H_{C_j(i)}^\mu + \smp{ \frac{C_1^{-1}r_j^{-2}}{|V(C_j(i),\phi)| } \int_{ B_{3r_j/2} (x)} ( h^\mu  - H_{C_j(i)}^\mu)^2\phi_{B_{r_j}(x)}^2dy} dt \leq (C_2r_j^{-2}+C_3)dt + d M_{C_j(i)}^\mu.
\end{equation}
We emphasize that the above inequality should be interpreted in the integral form, and this rule applies to all the differential inequalities below.

We define the following two stochastic processes
$$z(t,y):= h^\mu(s+t,y) - M_{C_j(i)}^\mu (t)  - (C_2r_j^{-2}+C_3)t-H_{C_j(i)}^\mu ( 0)$$
and
$$Z(t) := H_{C_j(i)}^\mu ( t)-H_{C_j(i)}^\mu ( 0) - M_{C_j(i)}^\mu (t) -(C_2r_j^{-2}+C_3)t.$$

The inequality \eqref{sd ineq} is equivalent to
\begin{equation}
\label{M-J-N p 1}
dZ + \frac{C_1^{-1}r_j^{-2}}{|V(C_j(i),\phi)| } \int_{ B_{3r_j/2 (x)}} (z - Z)^2\phi_{B_{r_j}(x)}^2  \;dy dt \leq 0 ,\quad  Z(0) =0.
\end{equation}

We now extract a growth bound for the level set of $z$ from \eqref{M-J-N p 1}. The inequality implies immediately for all non-negative $t$,
\[
Z (t) \leq 0, \quad \mp \text{ almost surely.}
\]

For arbitrary $a \geq1$, we write
$E_a (t):= \bigp{ y \in  B_{r_j} (x):  z (t,y) > a }.$
On $E_a (t)$, we have $0 < (a - Z) \leq (z - Z),$
thus $(a -Z)^2\abs{E_a (t)}  \leq \int_{ B_{3r_j/2} (x)} \smp{z - Z} ^2\phi_{B_{r_j}(x)}^2 \;dy .$
Then it follows from \eqref{M-J-N p 1} that
$$\frac{1}{(a - Z)^2 }d Z   + \frac{C_1^{-1}r_{j}^{-2} }{  |V(C_j(i),\phi)|}\abs{E_a (t) } \;dt  \leq 0.$$

By It\^o's formula,
$d \frac{1}{a - Z} =  \frac{1}{ (a - Z)^2} dZ + \frac{1}{ (a - Z)^3 } \;d \ip{M_{C_j(i)}^\mu }_t,$
so we have
$$\frac{C_1^{-1}r_{j}^{-2} }{  |V(C_j(i),\phi)|}\abs{E_a (t) } \;dt \leq  -d\frac{1}{a-Z}+\frac{1}{ (a- Z)^3} d\ip{M_{C_j(i)}^\mu }_t.$$

Using the above inequality in its integral form between $0$ and $4r_{j}^2$ and applying the quadratic variation bound on $M^\mu $, we obtain with the fact $r_j\leq1/2$,
$$\abs{\bigp{(t,y)\in C_j^+(i) :  z(t-s,y) > a}} \leq  \frac{C_4}{ a} \abs{C_j^+(i)}$$
for a constant $C_4$.

Recalling the expression of $z$, we have, for each $a \geq 1$
\[
\abs{\bigp{ (t, y) \in C_j^+(i) :  h^\mu(t,y )  -M_{C_j(i)}^\mu (t-s)  - (C_2r_j^{-2}+C_3)(t-s)  -H_{C_j(i)}^\mu ( 0)> a}} \leq  \frac{C_4}{a}\abs{C_j^+(i)}.
\]

Therefore, if we choose $a_{C_j(i)} = H_{C_j(i)}^\mu ( 0) $ and note the fact $r_j\leq1/2$ again,
\begin{align*}
&\;\quad\int_{C^+_j(i)} \sqrt{ \smp{ h^\mu(t) - M^\mu_{C_j(i)}(t-s) - a_{C_j(i)} }^+}\;dydt \\&\leq  \int_0^{4r_j^2}\int_{B_{r_j}(x)} \sqrt{ \smp{ h^\mu(t+s) - M^\mu_{C_j(i)}(t) - a_{C_j(i)} -(C_2r_j^{-2}+C_3)t}^+}+ \sqrt{(C_2r_j^{-2}+C_3)t} \;dydt\\
&\leq \int_{C^+_j(i), z\leq1} \sqrt{z^+(t-s)} \;dydt +\int_{C^+_j(i),z>1}\sqrt{z(t-s)}\;dydt+ 2\sqrt{C_2+C_3}\abs{C_j^+(i)}\\
&\leq  \int_1^{\infty} \frac{C_4\abs{C_j^+(i)}}{a} d\sqrt{a}  + (2+2\sqrt{C_2+C_3})\abs{C_j^+(i)} \leq A\abs{C_j^+(i)}.
\end{align*}

The other inequality can be proved by a completely symmetric procedure from \eqref{sd ineq} using $h(s-t,x)$ instead of $h(s+t,x)$ and this completes the proof for the lemma.
\end{proof}

Our next step is to bound the random perturbation term $M^\mu$ in {\sc Lemma \ref{s local bound}}.
\begin{lem}\label{S JN Step 3}
Let $u$ be a non-negative super-solution to \eqref{basiceqn} in $[0,2]\times B_1$. For every $\epsilon >0$, there exist a constant $A$ depending only on $(n, \iota, \Lambda, \epsilon)$ and random variables $a_j(i)$s such that for all $\mu>0$, inequalities \eqref{S JN equation 1 copy} and \eqref{S JN equation 2 copy} are satisfied for $f=h^\mu$ and any $C_j(i)\in\mathscr{C}'$ on a set of at least probability $1-\epsilon$.
\end{lem}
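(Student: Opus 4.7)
The plan is to start from the almost-sure estimate in {\sc Lemma \ref{s local bound}} and absorb the martingale perturbation $M^\mu_{C_j(i)}$ into the allowed constant on a single large-probability event. Using the elementary subadditivity $\sqrt{(a+b)^+}\le\sqrt{a^+}+\sqrt{b^+}$, I can split
\[
\sqrt{(h^\mu-a_{C_j(i)})^+}\le \sqrt{(h^\mu-M^\mu_{C_j(i)}-a_{C_j(i)})^+}+\sqrt{(M^\mu_{C_j(i)})^+},
\]
and the analogous inequality with the roles of $h^\mu-a_{C_j(i)}$ and $M^\mu_{C_j(i)}$ swapped in sign. After integrating over $C^\pm_j(i)$ and normalizing, the first summand contributes at most $\bar A$ by {\sc Lemma \ref{s local bound}}. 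Crucially, $M^\mu_{C_j(i)}(t)$ depends only on time, so the spatial average of $\sqrt{|M^\mu_{C_j(i)}|}$ on $C^\pm_j(i)$ reduces to a one-dimensional time average which is trivially bounded by $\sqrt{\sup_{t\in[0,4r_j^2]}|M^\mu_{C_j(i)}(t)|}$.

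The next step is a maximal martingale estimate. By {\sc Remark \ref{S JN Step 3 bound on M t}}, $\ip{M^\mu_{C_j(i)}}_t\le C\,t$ with $C$ depending only on $n,\iota,\Lambda$, so $M^\mu_{C_j(i)}$ is a continuous martingale whose quadratic variation on $[0,4r_j^2]$ is bounded almost surely by $4Cr_j^2$. By the standard exponential/Gaussian tail for continuous martingales with an a.s.\ quadratic variation bound (e.g.\ via the Dambis--Dubins--Schwarz time-change together with the reflection principle), for every $\lambda>0$
\[
\mp\bigp{\sup_{t\in[0,4r_j^2]}|M^\mu_{C_j(i)}(t)|>\lambda}\le 2\exp\bigp{-\lambda^{2}/(8Cr_j^{2})}.
\]
The same bound applies to the backward-in-time martingale appearing in the symmetric estimate for $C^-_j(i)$.

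Now I apply a union bound over $\mathscr{C}'$ using the counting estimate in {\sc Remark \ref{head count}}: the number of cubes in $\mathscr{C}'$ of spatial radius $r_j=2^{-1}4^{-j}$ is at most $4^{(n+3)j}$. Choosing a threshold $\lambda=B^{2}$ and summing, the probability that $\sup_{t\in[0,4r_j^2]}|M^\mu_{C_j(i)}(t)|>B^2$ for \emph{some} $C_j(i)\in\mathscr{C}'$ is at most
\[
\sum_{j\ge 0} 4^{(n+3)j}\cdot 4\exp\bigp{-B^{4}\,4^{2j+1}/(8C)},
\]
where the factor $4$ accounts for both $M^\mu_{C_j(i)}$ and its time-reversed counterpart. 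Since $4^{2j}$ grows strictly faster than $4^{(n+3)j}$ in the exponent, this series converges and tends to $0$ as $B\to\infty$. Given $\epsilon>0$, pick $B=B(\epsilon,n,\iota,\Lambda)$ so that the sum is $<\epsilon$.

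On the complement of this bad event, the $a_{C_j(i)}$ from {\sc Lemma \ref{s local bound}} together with the constant $A:=\bar A+B$ satisfy \eqref{S JN equation 1 copy} and \eqref{S JN equation 2 copy} with $f=h^\mu$ for every $C_j(i)\in\mathscr{C}'$ simultaneously, which is exactly the conclusion of the lemma. The only delicate point in carrying out this plan is the martingale maximal inequality with a random stochastic interval, but because {\sc Remark \ref{S JN Step 3 bound on M t}} furnishes a deterministic upper bound on the quadratic variation rate, this reduces to the standard Bernstein-type inequality for time-changed Brownian motions and causes no real trouble.
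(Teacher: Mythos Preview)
Your proposal is correct and follows essentially the same approach as the paper's proof: you invoke {\sc Lemma \ref{s local bound}} for the almost-sure bound with the random $a_{C_j(i)}$, control $\sup_{[0,4r_j^2]}|M^\mu_{C_j(i)}|$ via the Gaussian tail coming from the deterministic quadratic-variation bound in {\sc Remark \ref{S JN Step 3 bound on M t}}, and then sum over $\mathscr{C}'$ using the cube count in {\sc Remark \ref{head count}} to choose the threshold. The only cosmetic differences are that you spell out the subadditivity $\sqrt{(a+b)^+}\le\sqrt{a^+}+\sqrt{b^+}$ explicitly (the paper leaves this implicit) and that you phrase the maximal inequality via Dambis--Dubins--Schwarz rather than citing \cite[Lemma 3.1]{HWW14}; your remark about the time-reversed martingale for $C^-_j(i)$ is also a point the paper glosses over.
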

\begin{proof}
Since {\sc Lemma \ref{s local bound}} gives two upper bounds almost surely, we only need to uniformly bound $M^\mu_{C_j(i)}(t)$ regardless of $\mu$.

We recall from {\sc Remark \ref{S JN Step 3 bound on M t}} that there is a constant $J$ satisfying $\ip{M^\mu_{C_j(i)}}_t\leq Jt$.
Therefore for a large number $L>0$ and any $C_j(i)\in\mathscr{C}'$, by \cite[Lemma 3.1]{HWW14} we obtain,
$$\mp\bigp{\sup_{0\leq t\leq 4r_j^2}|M^\mu_{C_j(i)}(t)|\geq L,\; \ip{M^\mu_{C_j(i)}}_{4r_j^2}\leq 4r_j^2J}\leq \exp\bigp{-L^2/(16r_{j}^2J)}.$$
Since the second condition is always true, we have $$\mp\bigp{\sup_{0\leq t\leq 4r_{j}^2}|M^\mu_{C_j(i)}(t)|\geq L}\leq \exp\bigp{-L^2/(16r_{j}^2J)}.$$

This inequality and {\sc Lemma \ref{s local bound}} tell us for some constant $\bar{A}$, $$\mp\bigp{\frac{1}{|C^+_j(i)|}\int_{C^+_j(i)}\sqrt{(h^\mu-a_{C_j(i)})^+}dxdt> \bar{A}+\sqrt{L}}\leq \exp\bigp{-L^2/(16r_{j}^2J)},$$
and a similar argument provides $$\mp\bigp{\frac{1}{|C^-_j(i)|}\int_{C^-_j(i)}\sqrt{(a_{C_j(i)}-h^\mu)^+}dxdt> \bar{A}+\sqrt{L}}\leq \exp\bigp{-L^2/(16r_{j}^2J)}.$$

In {\sc Remark \ref{head count}}, we have denoted by $x_j$ the number of cubes with spatial radius $r_j$. Considering the event $\Theta(L)$ that there exists at least one of the $C_j(i)$s such that \eqref{S JN equation 1 copy} or \eqref{S JN equation 2 copy} fails with $A+\sqrt{L}$, we have
\begin{equation}\label{S JN final estimate}
\begin{split}
\mp\bigp{\Theta(L)}\leq\sum_{j=0}^\infty2x_j\exp\bigp{-L^2/(16r_{j}^2J)}
\leq2\sum_{j=0}^\infty4^{(n+3)j}\exp\bigp{-16^jL^2/(4J)}.
\end{split}
\end{equation}

Therefore we can choose a sufficiently large $L$ to make $\mp\bigp{\Theta(L)}\leq\epsilon$. This concludes our proof of the proposition with $A=\bar{A}+\sqrt{L}$.
\end{proof}

Now we can finish the proof of {\sc Proposition \ref{S JN_copy}}.
\begin{proof}[Proof of Proposition \ref{S JN_copy}]
For each $\epsilon>0$, {\sc Lemma \ref{S JN Step 3}} provides us that on a set $\Omega'\subseteq\Omega$ with probability at least $1-\epsilon$, \eqref{S JN equation 1 copy} and \eqref{S JN equation 2 copy} hold for $h^\mu=-\log(u+\mu)$ with some random variables $a_{C_j(i)}$s and a constant $A$ depending on $\epsilon$ on all $C_j(i)\in\mathscr{C}'$.

Applying {\sc Proposition \ref{S JN full}} with $f=h^\mu$ on the set $\Omega'$, we have for $\nu=\frac{b}{2A}$,
$$\mathscr{F}[u+\mu,\nu]_{D^+_0,D^-_0} \leq B^2\nu^2\abs{D_0^+}\abs{D_0^-}\smp{\int_0^\infty e^{-\frac{b\alpha}{2A}}d\alpha}^2$$
on $\Omega'$. The proposition is proved.
\end{proof}

\begin{remark}
So far we have proved {\sc Proposition \ref{S JN_copy}} for $t=1/2$. For other values of $t$, we need to make the following changes.
\begin{itemize}
\item $C_0$ will be changed to $(0,2)\times B_t$.
\item The relative positions of $C^\pm$, $D^\pm$, $I^\pm$ and $C$ in the division constructions of the cube collections will not change much and $C^\pm$ will still take the upper/lower halves of $C$. However, $D^\pm$ need to occupy the upper(lower) $t^2/2$ portions and $I^\pm$ need to occupy the upper(lower) $t/2$ portions.
\item The division processes mentioned above also need to be finer. We need to choose an integer $\zeta$ sufficiently large and divide the cubes into $\zeta^{n+2}$ pieces instead of $4^{n+2}$. The criteria for the choice of $\zeta$ is to allow the proofs of {\sc Lemma \ref{S JN Step 1}} and {\sc Lemma \ref{S JN Step 2}} to go through.
\item The smooth cut-off function in the third step needs to be $1$ on $B_t$ and vanishes outside of $B_{(1+t)/2}$.
\end{itemize}
The rest of the proof for the cases $t\neq1/2$ is a verbatim repetition of the proofs in this and the next sections.
\end{remark}

\section{Acknowledgement}
The project was initially started by the author and Doctor Yu Wang (currently in Goldman Sachs) in $2014$ upon the completion of \cite{HWW14}. Although the collaboration ended after the departure of Yu Wang, the discussion with him has helped clarify many confusions. His contribution to this project is greatly appreciated.

\end{document}